\newcommand{\RR}{{\mathbb R}}
\newcommand{\ZZ}{{\mathbb Z}}
\newcommand{\CI}{{C^\infty}}
\title{Microlocal analysis of forced waves}
\author{Semyon Dyatlov}
\email{dyatlov@math.berkeley.edu}
\address{Department of Mathematics, University of California, Berkeley, CA 94720}
\address{Department of Mathematics, MIT,
Cambridge, MA 02139}\author{Maciej Zworski}
\email{zworski@math.berkeley.edu}
\address{Department of Mathematics, University of California, Berkeley, CA 94720}
\begin{document}

\begin{abstract}
We use radial estimates for pseudodifferential operators to describe long time  evolution of solutions to $ i u_t - P u = f $ where $ P $ is a self-adjoint 0th order pseudodifferential operator satisfying hyperbolic dynamical assumptions
and where $ f $ is smooth.
This is motivated by recent results of Colin de Verdi\`ere and Saint-Raymond  \cite{SC} concerning a microlocal model of internal waves in stratified fluids.
\end{abstract}

\dedicatory{Dedicated to Richard Melrose on the occasion of his 70th birthday}

\maketitle

\section{Introduction}

Colin de Verdi\`ere and Saint-Raymond~\cite{SC} recently found an interesting connection between modeling of internal waves in stratified fluids and spectral theory of zeroth order pseudodifferential operators on compact 
manifolds. In other problems of fluid mechanics relevance of such operators
has been known for a long time, for instance in the work of Ralston \cite{Ra73}. We refer to \cite{SC} for pointers to current physics literature on internal waves and for numerical and experimental illustrations.

\begin{figure}
\includegraphics[width=7.5cm]{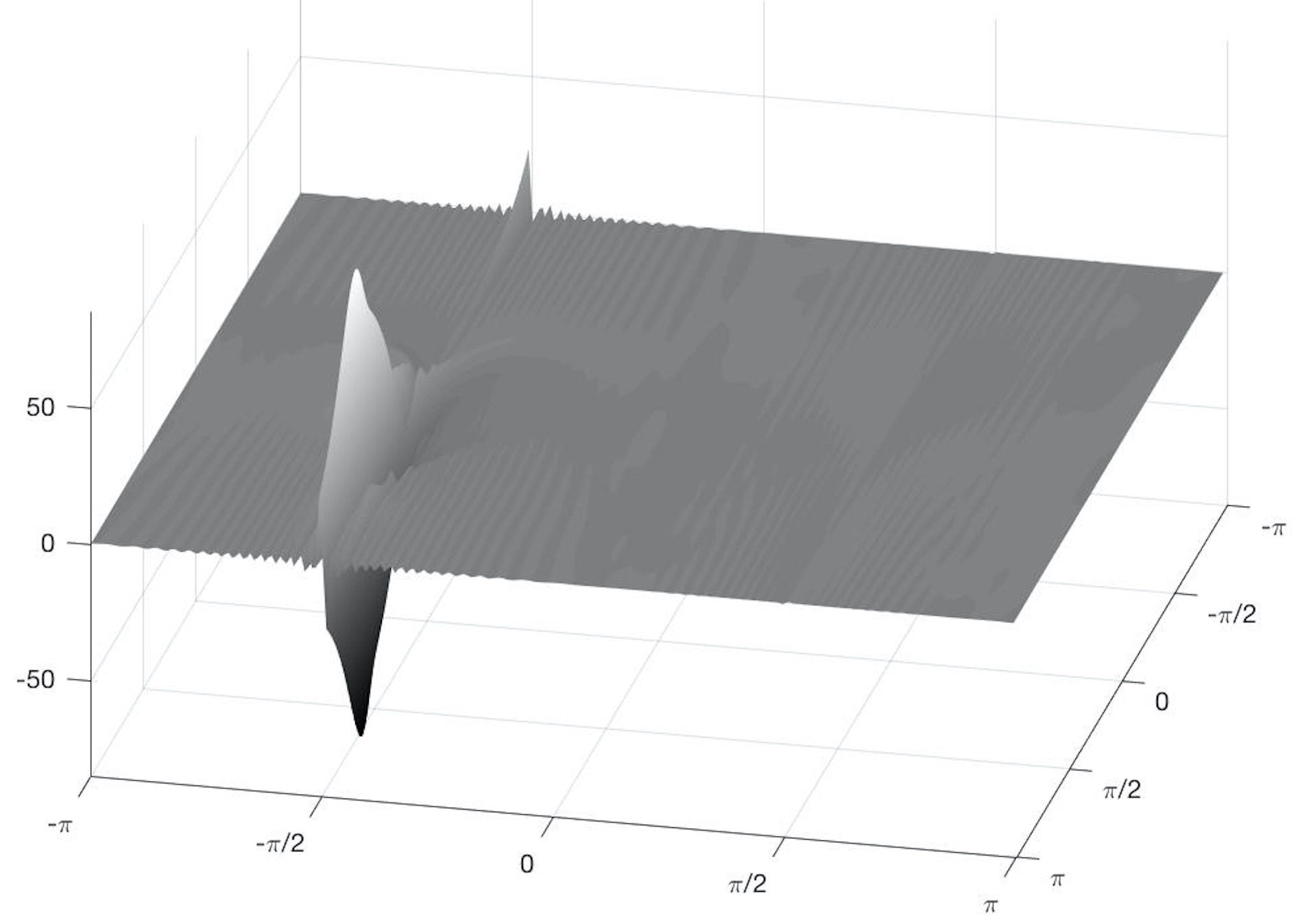}
\includegraphics[width=6.5cm]{flop.1}
\label{f:1}
\caption{On the left: the plot of the real part of $  u ( 50) $ for $ P =
\langle D \rangle^{-1} D_{x_2} + 2 \cos x_1 $ on $\mathbb T^2$ and $ f $ given by a smooth bump function centered at $ ( -\pi/2, 0 ) $.
We see the singularity formation on the line $ x_1 = -\pi/2 $.
On the right:  $ \Sigma :=
\kappa ( p^{-1} ( 0 ) ) \subset \partial \overline T^* \mathbb T^2$.
The
attracting Lagrangian, $ \Lambda^+_0 $,  comes from the highlighted circles.
See \S \ref{exa} for a discussion of the examples shown in the figures.
}
\end{figure}

The purpose of this note is to show how the main result of \cite{SC} (see 
also \cite{C}) follows from 
the now standard radial estimates for pseudodifferential operators. In particular, we avoid the use of Mourre theory, normal forms and Fourier integral operators and do not assume that the subprincipal symbols vanish. We also relax some geometric assumptions. The conclusions are formulated 
in terms of Lagrangian regularity in the sense of 
H\"ormander \cite[\S 25.1]{H3}. We illustrate the results with numerical examples. There are many possibilities for refinements but we restrict ourselves to applying off-the-shelf results at this stage.

Radial estimates were introduced by Melrose \cite{mel} for the study 
of asymptotically Euclidean scattering and have been developed further in 
various settings. We only mention some of the more relevant ones: scattering by zeroth order potentials (very close in spirit to the problems considered in \cite{SC}) by Hassell--Melrose--Vasy \cite{hmv}, asymptotically hyperbolic scattering by Vasy \cite{Va} (see also \cite[Chapter 5]{DZ} and \cite{V4D}) and
by Datchev--Dyatlov \cite{DaD}, in general relativity by Vasy \cite{Va}, Dyatlov \cite{dy} and Hintz--Vasy \cite{HiV}, and in hyperbolic dynamics by Dyatlov--Zworski \cite{DZ}. 
Particularly useful here is the work of Haber--Vasy \cite{hb} which generalized some of the results of \cite{hmv}. A very general version of radial estimates is presented ``textbook style" in \cite[\S E.4]{res}.

\subsection{The main result}

Motivated by internal waves in linearized fluids the authors of \cite{SC} considered long time behaviour of solutions to 
\begin{equation}
\label{eq:SC1}
\begin{gathered}
( i \partial_t - P ) u ( t ) =  f , \ \ u (0) =0, \ \
f \in \CI ( M ) , \\ 
P \in \Psi^0 (  M ) , \ \ P= P^* \end{gathered}
\end{equation}
where $M$ is a closed surface and
$ P $ satisfies dynamical assumptions presented in \S \ref{ass}. By changing
$ P $ to $ P - \omega_0 $ we can change $ f $ to the more physically relevant
oscillatory forcing term, $ e^{ - i \omega_0 t } f $.

Since the solution $ u ( t ) $ is given by 
\begin{equation}
\label{eq:uoft}  u ( t ) = -i\int_0^t e^{ - i s P  } f \, ds
= P^{-1} ( e^{ - it  P  }- 1)f 
,\end{equation}
(where the operator $ P^{-1} (e^{ -i t  P  }-1 )  $
is well defined for all $ t $ using the spectral theorem), the properties of the spectrum of~$P$ play a crucial role in the description of the long time behaviour of $ u ( t ) $. Referring to~\S\ref{ass} for the precise assumptions we state

\medskip
\noindent
{\bf Theorem.} {\em Suppose that the operator $ P $ satisfies assumptions~\eqref{eq:assP},  
\eqref{eq:dynaSC} below and that $ 0 \notin \Spec_{\rm{pp}} ( P ) $.  Then, for any $ f \in C^\infty ( M ) $, the solution to \eqref{eq:SC1} satisfies
\begin{equation}
\label{eq:SC2}
\begin{gathered}
 u ( t ) = u_\infty + b ( t ) + \epsilon ( t) ,\ \
\| b ( t ) \|_{L^2 } \leq C, \ \ \| \epsilon ( t ) \|_{ H^{-\frac12 - }}
\to 0 , \ \ t \to \infty  ,
\end{gathered}
\end{equation}
where  {(denoting by $H^{-\frac 12-}$ the intersection of the spaces $H^{-\frac 12-\varepsilon}$ over $\varepsilon>0$)}
\begin{equation}
\label{eq:DZ1}
u_\infty \in I^{0} ( M ; \Lambda^+_0 ) \subset H^{-\frac12-} ( M) 
\end{equation}
and $ I^{0} ( M ; \Lambda^+_0 ) $ is the space of Lagrangian distributions of order~$ 0 $ (see~\S\ref{s:lagrangian-basic}) associated 
to the attracting Lagrangian $ \Lambda^+_0 $ defined in \eqref{eq:Laplus}.}

The proof gives other results obtained in~\cite{SC}. In particular, we see that
in the neighbourhood of $ 0 $ the spectrum of~$ P$ is absolutely continuous except for finitely many eigenvalues with smooth eigenfunctions -- see~\S \ref{eig}.

In the case of  {general} Morse--Smale flows  {(allowing for fixed points)}, Colin de Verdi\`ere \cite[Theorem 4.3]{C} used a hybrid of 
Mourre estimates (in particular their finer version 
given by Jensen--Mourre--Perry \cite{jemp}) and of the radial estimates \cite[\S E.4]{res} to obtain a version of \eqref{eq:SC2} with an estimate on 
$ \WF ( u_\infty ) $. At this stage the purely microlocal approach of this paper would only give 
$ \| \epsilon ( t ) \|_{ H^{-\frac32 - }}
\to 0 $. 

\subsection{Assumptions on $ P $}
\label{ass}
We assume that $M$ is a compact surface without boundary
and $ P\in\Psi^0(M) $ is a 0th order pseudodifferential operator with 
principal symbol $ p \in S^0 ( T^* M \setminus 0;\mathbb R )  $ which is homogeneous (of order 0)
{and has $0$ as a regular value}. We also assume that for some smooth density, $ dm ( x ) $, on $M $, $ P $ is self-adjoint:
\begin{equation}
\label{eq:assP}
\begin{gathered}
P \in \Psi^0 ( M ) , \ \ P = P^* \text{ on $L^2 ( M , dm(x) ) $}, \\ p := \sigma (P ),  \ \ p ( x , t \xi ) = p ( x, \xi) , \ t > 0, \ \  dp |_{p^{-1} ( 0 ) } \neq 0. 
\end{gathered}
\end{equation}
The homogeneity assumption on $ p $ can be removed as the results of 
\cite[\S E.4]{res} and \cite{zazi} we use do not require it. That would however complicate the statement of the dynamical assumptions.

We use the notation of \cite[\S E.1.3]{res}, denoting by $ \overline T^* M $ the fiber-radially
compactified co-tangent bundle. Define the quotient map for the 
$ \RR^+ $ action, $ ( x, \xi ) \mapsto ( x , t \xi )$, $ t > 0 $, 
\begin{equation}
\label{eq:kap}  \kappa : \overline T^* M\setminus 0 \longrightarrow 
\partial \overline T^* M . 
\end{equation}
 {Denote by $|\xi|$ the norm of a covector $\xi\in T_x^*M$ with respect to some fixed Riemannian metric on~$M$.}
The rescaled Hamiltonian vector field $|\xi|H_p$ commutes with the $\mathbb R^+$ action and
\begin{equation}
\label{eq:defSig}
X:= \kappa_*(| \xi | H_p )\quad\text{is tangent to}\quad \Sigma := \kappa ( p^{-1} ( 0 ) ) . 
\end{equation}
Note that $\Sigma$ is an orientable surface since it is defined by the equation $p=0$
in the orientable 3-manifold $\partial \overline T^*M$.

We now recall the dynamical assumption made by Colin de Verdi\`ere and Saint-Raymond \cite{SC}:
\begin{equation}
\label{eq:dynaSC}
\text{ The flow of $ X $ on $ 
\Sigma  $ is a Morse--Smale flow with {\em no} fixed points. } 
\end{equation}
For the reader's convenience we recall the definition of Morse--Smale flows 
generated by $ X $ on a surface $ \Sigma $ (see \cite[Definition 5.1.1]{Surf}):
\begin{enumerate}
\item
 $ X $  has a finite number of fixed points all of which are hyperbolic;
\item
 $ X $ has a finite number of hyperbolic limit cycles;
\item
there are no separatrix connections between saddle fixed points; 
\item every trajectory different from (1) and (2) has unique trajectories
(1) or (2) as its $\alpha$, $\omega$-limit sets. 
\end{enumerate}
As stressed in \cite{SC}, Morse--Smale flows enjoy stability and genericity 
properties -- see \cite[Theorem 5.1.1]{Surf}. At this stage, following \cite{SC}, me make the strong assumption that there are no fixed points. 
By the Poincar\'e--Hopf Theorem that forces $ \Sigma $ to be a union of tori.

\begin{figure}
\includegraphics[width=7.5cm]{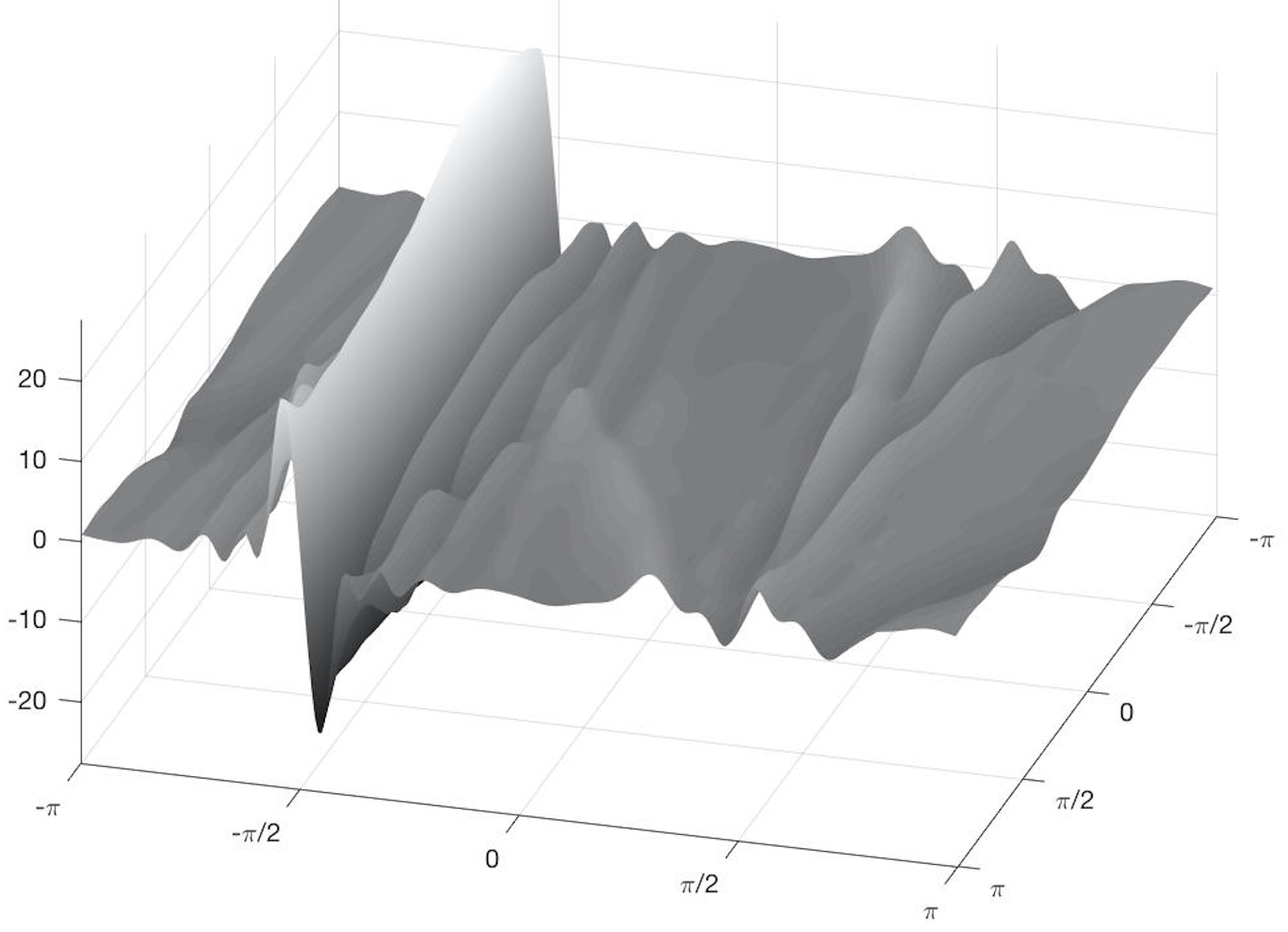}\includegraphics[width=6.5cm]{flop.2}
\caption{On the left: the plot of the real part of $  u ( 50) $ for 
$P$ given by~\eqref{eq:P2} and $ f $ given by a smooth bump function centered at $ ( -\pi/2, 0 ) $.
We see the singularity formation on the line $ x_1 = -\pi/2 $ and the slower
formation of singularity at $ x_1 = \pi/2 $.
On the right:  $ \Sigma :=
\kappa ( p^{-1} ( 0 ) )$. The 
attracting Lagrangian $ \Lambda^+_0 $  comes from the highlighted circles. }
\label{f:2}
\end{figure}

Under the assumption \eqref{eq:dynaSC}, the flow of~$X$ on~$ \Sigma $ has an attractor $ L^+_0$, which is a union of closed attracting curves. We define
the following {\em conic Lagrangian submanifold} of $ T^* M \setminus 0 $ (see \cite[\S 21.2]{H3}
and Lemma~\ref{l:sink-established}):
\begin{equation}
\label{eq:Laplus}  \Lambda^+_0 := \kappa^{-1} ( L^+_0 ) .
\end{equation}

\subsection{Examples}
\label{exa}

We illustrate the result with two simple examples
on $M:=\mathbb T^2=\mathbb S^1\times\mathbb S^1$
where $\mathbb S^1=\mathbb R/(2\pi\mathbb Z)$.
Denote $D:={1\over i}\partial$.
Consider first
\begin{equation}
\label{eq:P1}
\begin{gathered} 
P  := \langle D \rangle^{-1} D_{x_2} - 2 \cos x_1 , \ \ 
p = |\xi|^{-1} \xi_2 -2 \cos x_1 , \\
|\xi|H_p=-{\xi_1\xi_2\over |\xi|^2}\partial_{x_1}+{\xi_1^2\over|\xi|^2}\partial_{x_2}
-2(\sin x_1)|\xi|\partial_{\xi_1},\\
\Lambda^+_0 = \{ ( \pm \pi/2 , x_2; \xi_1 , 0 ) : x_2 \in \mathbb S^1 ,\ \pm \xi_1 < 0 \} .
\end{gathered}
 \end{equation}
In this case $ \kappa ( p^{-1} ( 0 ) ) $  (with $ \kappa $ given in \eqref{eq:kap}) is a union of two tori which do {\em not} cover~$ \mathbb T^2 $ (and thus does not satisfy the assumptions of~\cite{SC} but is covered by the treatment here, and  in \cite{C}). See Figure \ref{f:1} for the plot of $ \Re u ( t ) $, $ t = 50 $ and for a schematic visualization of $ \Sigma=\kappa (p^{-1} ( 0) ) $.

Our result applies also to the closely related operator 
\begin{equation}
\label{eq:P2}
\begin{gathered} 
P := \langle D \rangle^{-1} D_{x_2} - \tfrac12 \cos x_1 , \quad
p = |\xi|^{-1} \xi_2 - \tfrac 12 \cos x_1 ,\\
|\xi|H_p=-{\xi_1\xi_2\over |\xi|^2}\partial_{x_1}+{\xi_1^2\over|\xi|^2}\partial_{x_2}
-\tfrac12 {\sin x_1}|\xi|\partial_{\xi_1}.
\end{gathered}
 \end{equation}
The attracting Lagrangians are the same but the energy surface $ \kappa ( p^{-1} ( 0 ) )$ consists of two tori covering $ \mathbb T^2 $ (and hence satisfying the assumptions of~\cite{SC}) -- see Figure \ref{f:2}.

\section{Geometric structure of attracting Lagrangians}

In this section we prove geometric properties of the attracting and repulsive
Lagrangians for the flow $e^{t|\xi|H_p}$ where $p$ satisfies~\eqref{eq:dynaSC}.

\subsection{Sink and source structure}
\label{s:sink-source}

Let $ \Sigma ( \omega ) := \kappa ( p^{-1} ( \omega ) ) $. If $ \delta>0 $ is sufficiently small then 
stability of Morse--Smale flows (and the stability of non-vanishing of 
$ X $) shows that \eqref{eq:dynaSC} is satisfied for $ \Sigma ( \omega ) $, 
$|\omega|\leq 2\delta$. 
Let $ L^\pm_ \omega \subset \Sigma ( \omega) $ be the attractive ($+$) and repulsive ($-$) hyperbolic cycles for the flow of $ X $ on $ \Sigma ( \omega )  $. 
We first establish dynamical properties needed for the application of
radial estimates in \S \ref{reso}:
\begin{lemm}
  \label{l:sink-established}
$L^+ _ \omega$ is a radial sink and $ L^-_\omega  $ a radial source for the Hamiltonian flow 
of $ |\xi|(p - \omega)  = |\xi|\sigma ( P - \omega ) $ 
in the sense of \cite[Definition~E.50]{res}. The conic submanifolds
\[ \Lambda^\pm_\omega := \kappa^{-1} ( L^\pm_\omega )
\subset T^* M \setminus 0  \]
are Lagrangian.
\end{lemm}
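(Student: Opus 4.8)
The statement has two parts: first, that $L^+_\omega$ (resp. $L^-_\omega$) is a radial sink (resp. source) for the rescaled Hamiltonian flow of $|\xi|(p-\omega)$ in the sense of \cite[Definition~E.50]{res}; second, that the conic lifts $\Lambda^\pm_\omega = \kappa^{-1}(L^\pm_\omega)$ are Lagrangian. I would treat $\omega$ fixed throughout (by the stability remark preceding the lemma, nothing changes as $\omega$ varies over $|\omega|\le 2\delta$), and by symmetry — replacing $p-\omega$ by $-(p-\omega)$ reverses the flow and swaps attractors with repellers — it suffices to argue the sink case for $L^+_\omega$.

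\emph{Radial sink.} The key point is to unpack \cite[Definition~E.50]{res} and check its hypotheses against the known structure of a hyperbolic attracting limit cycle of the surface flow $X$ on $\Sigma(\omega)$. Recall that $\Sigma(\omega)$ is a smooth orientable surface inside the contact 3-manifold $\partial\overline T^*M$, that $X=\kappa_*(|\xi|H_p)$ is tangent to $\Sigma(\omega)$, and that $L^+_\omega$ is a finite union of hyperbolic periodic orbits of $X$ that attract a full neighbourhood in $\Sigma(\omega)$. First I would verify that $L^+_\omega$, as a subset of $\partial\overline T^*M$, is a smooth compact submanifold invariant under the flow $e^{t|\xi|H_p}$ pushed to the boundary, and that the flow on it has no fixed points (it is a union of closed orbits). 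Next, the normal bundle of $L^+_\omega$ in $\partial\overline T^*M$ splits into two line bundles: the direction \emph{inside} $\Sigma(\omega)$ transverse to the cycle, which is contracted at an exponential rate by hyperbolicity of the limit cycle, and the direction transverse to $\Sigma(\omega)$ (i.e. the $dp$ direction), along which one uses that $p$ (equivalently the rescaled Hamilton flow moving off the characteristic surface) is also contracted — this is where homogeneity / the structure $|\xi|H_p$ of the vector field matters, and where one invokes that the flow is a genuine gradient-like contraction normal to $\Sigma$. Concretely, I would produce a defining function of the form $\rho_1^2+\rho_2^2$ (square of distance to $L^+_\omega$, built from a transverse coordinate in $\Sigma(\omega)$ and from $p-\omega$ itself) and show its derivative along the flow is negative definite near $L^+_\omega$, which is exactly the quantitative sink condition of \cite[Definition~E.50]{res}; the nontrivial uniformity across the several periodic orbits making up $L^+_\omega$ and over a neighbourhood of $L^+_\omega$ follows from compactness plus hyperbolicity estimates (Lyapunov functions for hyperbolic cycles, as in \cite[Chapter~5]{Surf}).

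\emph{Lagrangian property.} For the second part, $\Lambda^+_\omega=\kappa^{-1}(L^+_\omega)$ is automatically conic and a smooth submanifold of $T^*M\setminus 0$ of dimension $1+1=2=\dim M$ (one dimension from the cycle(s) in $\Sigma(\omega)$, one from the radial $\mathbb R^+$ direction). To see it is Lagrangian I would use that it is contained in the conic hypersurface $p^{-1}(\omega)$, so the restriction of the symplectic form $d\sigma$ (where $\sigma=\xi\,dx$ is the tautological one-form) to $\Lambda^+_\omega$ has the Hamilton vector field $H_{p}$ in its kernel; combined with the standard fact that on a conic submanifold the radial (Euler) vector field $\xi\partial_\xi$ satisfies $\iota_{\xi\partial_\xi}d\sigma = -\sigma|_{\Lambda}$ while also being tangent to $\Lambda^+_\omega$, one gets that $\sigma$ vanishes on $\Lambda^+_\omega$ once we know $H_p$ and $\xi\partial_\xi$ together span the tangent space — which is precisely the 2-dimensionality just counted, provided these two fields are linearly independent (true away from fixed points of $X$, which we have assumed do not occur). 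Then $\sigma|_{\Lambda^+_\omega}=0$ gives $d\sigma|_{\Lambda^+_\omega}=0$, i.e. $\Lambda^+_\omega$ is Lagrangian. Alternatively, this is exactly the content of \cite[\S21.2]{H3} on conic Lagrangians, so I would cite that.

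\emph{Main obstacle.} The routine part is the Lagrangian computation; the real work is the first part — carefully matching the abstract radial-sink definition of \cite[Definition~E.50]{res}, which demands quantitative contraction in \emph{all} directions normal to $L^+_\omega$ inside $\partial\overline T^*M$ together with a one-sided condition on the sign of the flow applied to a boundary defining function, to the dynamical input we actually have, namely hyperbolicity of a limit \emph{cycle} (not a fixed point) of a \emph{surface} flow. The subtleties are (i) that $L^+_\omega$ is a union of closed orbits rather than a single point, so one must work with normal bundles and transverse Poincaré maps rather than linearizations at a point, and (ii) disentangling the two normal directions — tangent-to-$\Sigma(\omega)$ versus transverse-to-$\Sigma(\omega)$ — and checking that the transverse-to-$\Sigma$ contraction rate is controlled (it is governed by $H_p$ acting on $p$, hence by the derivative of the flow in the energy direction, which near a sink in $\Sigma$ is automatically contracting once one rescales correctly). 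I expect the cleanest route is to build an explicit Lyapunov-type quadratic defining function for $L^+_\omega$ and differentiate it along $|\xi|H_p$, absorbing all uniformity into a single compactness argument.
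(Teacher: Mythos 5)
Your treatment of the Lagrangian statement is correct and coincides with the paper's argument: $H_p$ and $\xi\partial_\xi$ are tangent to $\Lambda^\pm_\omega$ and independent there (because $X$ has no fixed points on $L^\pm_\omega$), hence span its tangent space, and $\sigma(H_p,\xi\partial_\xi)=-dp(\xi\partial_\xi)=0$ by homogeneity of $p$ of order $0$. That part is fine.

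The radial sink/source part has a genuine gap, and it sits exactly where you locate ``the real work.'' You assert that in the directions transverse to $\Sigma(\omega)$ inside $\overline T^*M$ --- the $dp$ direction and the fiber--infinity direction measured by $\rho=|\xi|^{-1}$ --- the rescaled flow is ``automatically contracting once one rescales correctly,'' a ``gradient-like contraction normal to $\Sigma$.'' No mechanism is given, and none is evident: hyperbolic attractivity of the limit cycle only controls the transverse direction \emph{within} $\Sigma(\omega)$. The remark following the lemma in the paper stresses that for the unrescaled flow of $p-\omega$ the conclusion is actually \emph{false}, precisely because \cite[Definition~E.50]{res} demands convergence of all nearby trajectories, not just those on the characteristic set; so the transverse rates are not free and must be computed. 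The paper's proof supplies the missing mechanism: with $Y:=H_{|\xi|p}$ one has $Y\rho=f_2\rho$ and $Yp=f_2p$ near $L^+_0$ with the \emph{same} factor $f_2$, so the linearized Poincar\'e map $\mathcal P$ of the closed orbit has eigenvalues $c_1$ (transverse direction inside $\Sigma$, with $|c_1|<1$ by hyperbolicity) and $c_2,c_2$ (the $dp$ and $d\rho$ directions); invariance of the symplectic density $|\sigma\wedge\sigma|$, which equals $\rho^{-3}$ times a density smooth up to the boundary, then forces $\det\mathcal P=c_2^3=c_1c_2^2$, hence $c_2=c_1$ and the spectral radius of $\mathcal P$ is $|c_1|<1$. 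Note also that your candidate Lyapunov function $\rho_1^2+\rho_2^2$ uses only two transverse functions, whereas $L^+_\omega$ has codimension three in $\overline T^*M$, so a third function ($\rho$ itself) must be included; and its derivative along the flow cannot be shown negative definite without the identification $c_2=c_1$ (or an equivalent computation of the transverse rates), which is the heart of the proof and is absent from your proposal.
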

\Remark It is not true that $L^\pm_\omega$ are radial sinks/sources
for the Hamiltonian flow of $p-\omega$ since~\cite[Definition~E.50]{res}
requires convergence of all nearby Hamiltonian trajectories,
not just those on the characteristic set $p^{-1}(\omega)$.
See Remark~3 following~\cite[Definition~E.50]{res} for details.
The singular behavior of $|\xi|$ at $\xi=0$ is irrelevant here
since we are considering a neighbourhood of the fiber infinity.
\begin{proof}
We consider the case of $L^+_\omega$ as that of  $L^-_\omega$ is similar.
To simplify the formulas below we put $\omega:=0$.
To see that $ \Lambda^+_0 $ is a Lagrangian submanifold we note that 
$ H_p $ and $ \xi \partial_\xi $ are tangent to $ \Lambda^+_0$
and independent (since $X$ does not vanish on $L^+_0$). Denoting the symplectic form by 
$ \sigma $, we have $ \sigma ( H_p , \xi \partial_\xi ) = -dp ( \xi \partial_\xi) = 0 $, that is $ \sigma $ vanishes on the tangent space to $ \Lambda^+_0 $.

We next show that $L^+_0$ is a radial sink. For simplicity assume that it consists of a single attractive closed trajectory of $X$ of period $T>0$, in particular $e^{TX}=I$ on~$L^+_0$.
Define the vector field
$$
Y:=H_{|\xi|p}
$$
which is homogeneous of order~0
on $T^*M\setminus 0$ and thus extends smoothly to the fiber-radial compactification
$\overline T^*M\setminus 0$, see~\cite[Proposition~E.5]{res}. We have
$Y=X$ on $\partial\overline T^*M\cap p^{-1}(0)$, thus $L^+_0\subset\partial\overline T^*M$
is a closed trajectory of $Y$ of period $T$.

Fix arbitrary $(x_0,\xi_0)\in L^+_0$ and define the linearized Poincar\'e map
$\mathcal P$ induced by $de^{TY}(x_0,\xi_0)$
on the quotient space $T_{(x_0,\xi_0)}(\overline T^*M)/\mathbb RY_{(x_0,\xi_0)}$.
The adjoint map $\mathcal P^*$
acts on covectors in $T^*_{(x_0,\xi_0)}(\overline T^*M)$ which annihilate $Y_{(x_0,\xi_0)}$.
To prove that $L^+_0$ is a radial sink it suffices to show that
the spectral radius of~$\mathcal P$ is strictly less than~1.

Put $\rho:=|\xi|^{-1}$
which is a boundary defining function on $\overline T^*M$,
then $\Sigma=\partial\overline T^*M\cap p^{-1}(0)$
is given by $\{p=0,\ \rho=0\}$.
Since $Y=X$ on $\Sigma$ and $L^+_0$ is an attractive cycle for~$X$ on~$\Sigma$, we have
$$
\mathcal P|_{\ker(dp)\cap\ker(d\rho)}=c_1\quad\text{for some }c_1\in\mathbb R,\ |c_1|<1.
$$
Since $Y$ is tangent to $\partial\overline T^*M=\rho^{-1}(0)$,
we have $Y\rho=f_2\rho$ for some $f_2\in C^\infty(\overline T^*M\setminus 0;\mathbb R)$.
Recalling that $Y=H_{|\xi|p}$ we compute
$Yp=pH_{|\xi|}p=-pH_p(\rho^{-1})=f_2p$. Denoting $c_2:=f_2(x_0,\xi_0)$ we then have
$$
\mathcal P^*(dp(x_0,\xi_0))=c_2 dp(x_0,\xi_0),\quad
\mathcal P^*(d\rho(x_0,\xi_0))=c_2 d\rho(x_0,\xi_0).
$$
Thus $\mathcal P$ has eigenvalues $c_1,c_2,c_2$. On the other hand,
$e^{TY}$ preserves the symplectic density $|\sigma\wedge\sigma|$ which has the form
$\rho^{-3} d\vol$ for some density $d\vol$ on $\overline T^*M$ which is smooth
up to the boundary. Taking the limit of this statement
at $(x_0,\xi_0)$ we obtain $\det\mathcal P=\det de^{TY}(x_0,\xi_0)=c_2^3$.
It follows that $c_1=c_2$ and thus $\mathcal P$ has spectral
radius $|c_1|<1$ as needed.
\end{proof}
For future use we define the conic hypersurfaces in $T^*M\setminus 0$
\begin{equation}
  \label{e:Lambda-pm-def}
\Lambda^\pm := \bigcup_{|\omega|<2\delta}\Lambda^\pm_\omega.
\end{equation}

\subsection{Geometry of Lagrangian families}
  \label{s:lagrangian-geometry}

We next establish some facts 
about families of Lagrangian submanifolds which do not need the dynamical assumptions~\eqref{eq:dynaSC}.
Instead we assume that:
\begin{itemize}
\item $p:T^*M\setminus 0\to\mathbb R$ is homogeneous of order~0;
\item $\Lambda\subset T^*M\setminus 0$ is a conic hypersurface;
\item $dp|_{T\Lambda}\neq 0$ everywhere;
\item the Hamiltonian vector field $H_p$ is tangent to $\Lambda$.
\end{itemize}
Under these assumptions, the sets
$$
\Lambda_\omega:=\Lambda\cap p^{-1}(\omega)
$$
are two-dimensional conic submanifolds of $T^*M\setminus 0$. Moreover, similarly
to Lemma~\ref{l:sink-established}, each $\Lambda_\omega$ is Lagrangian.
Indeed, if $G$ is a (local) defining function of $\Lambda$, namely
$G|_{\Lambda}=0$ and $dG|_{\Lambda}\neq 0$, then
$H_p$ being tangent to $\Lambda$ implies
\begin{equation}
  \label{e:p-G-commute}
\{p,G\}=0\quad\text{on}\quad \Lambda.
\end{equation}
Thus $H_p,H_G$ form a tangent frame on $\Lambda_\omega$ and $\sigma(H_p,H_G)=0$ on~$\Lambda$, where
$\sigma$ denotes the symplectic form.

Since $\xi\partial_\xi$ is tangent to each $\Lambda_\omega$, for any choice
of local defining function $G$ of $\Lambda$ we can write
\begin{equation}
  \label{e:Phi-new-def}
\xi\partial_\xi=\Phi H_p+\Theta H_G\quad\text{on}\quad\Lambda
\end{equation}
for some functions $\Phi,\Theta$ on $\Lambda$.
Since the one-dimensional subbundle
$\mathbb RH_G\subset T\Lambda$ is invariantly defined 
we see that $\Phi\in C^\infty(\Lambda;\mathbb R)$ does not depend on the choice of $G$.

The function $\Phi$ is homogeneous of order~1. Indeed, we can choose $G$ to be homogeneous of order~1 which implies
that $[\xi\partial_\xi,H_G]=0$; we also have $[\xi\partial_\xi,H_p]=-H_p$. 
By taking the commutator of both sides of~\eqref{e:Phi-new-def}
with $\xi\partial_\xi$ we see that $ \xi \partial_\xi \Phi = \Phi $.
 {Similarly we see that $\Theta$ is homogeneous of order~0.}

On the other hand, taking the commutators of both sides of~\eqref{e:Phi-new-def}
with $H_p$ and $H_G$ and using the following consequence of~\eqref{e:p-G-commute}, 
$$
[H_p,H_G]=H_{\{p,G\}}\in \mathbb RH_G\quad\text{on}\quad \Lambda,
$$ 
we get the following identities:
\begin{equation}
  \label{e:Phi-prop-1}
H_p\Phi\equiv 1,\quad
H_G\Phi\equiv 0\quad\text{on}\quad\Lambda.
\end{equation}
The function $\Phi$ is related to the $\omega$-derivative of a generating function of $\Lambda_\omega$
(see~\eqref{e:lm-par}):
\begin{lemm}
  \label{l:phase-der}
Assume that $\Lambda_\omega$ is locally given (in some coordinate system on~$M$) by
\begin{equation}
  \label{e:phase-der-1}
\Lambda_\omega=\{(x,\xi)\colon x=\partial_\xi F(\omega,\xi),\ \xi\in \Gamma_0\},
\end{equation}
where $ \xi \mapsto F ( \omega, \xi ) $ is a family of homogeneous functions of order~1 and $ \Gamma_0 \subset \RR^2 \setminus 0 $ is a cone. Then we have
\begin{equation}
  \label{e:phase-der-2}
\partial_\omega F(\omega,\xi)=-\Phi(\partial_\xi F(\omega,\xi),\xi).
\end{equation}
\end{lemm}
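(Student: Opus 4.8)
The plan is to recover $\Phi$ from the restriction to $\Lambda$ of the tautological one-form $\alpha=\sum_i\xi_i\,dx_i$ on $T^*M$, for which $d\alpha=\sigma$ and $\iota_{\xi\partial_\xi}\sigma=\alpha$ (with $\xi\partial_\xi$ the Liouville vector field), and to compare it with the pullback of $\alpha$ under the generating‑function parametrization~\eqref{e:phase-der-1}.

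First I would parametrize the relevant piece of $\Lambda=\bigcup_\omega\Lambda_\omega$ by the map $j(\omega,\xi):=(\partial_\xi F(\omega,\xi),\xi)$, with $\xi\in\Gamma_0$ and $\omega$ ranging over a small interval. Since $\Lambda_\omega=\Lambda\cap p^{-1}(\omega)$ we have $p\circ j\equiv\omega$, hence $j^*\bigl(dp|_{T\Lambda}\bigr)=d\omega$. Substituting $x_i=\partial_{\xi_i}F(\omega,\xi)$ into $\alpha$ gives
\[
j^*\alpha=\sum_k\Bigl(\sum_i\xi_i\,\partial_{\xi_i}\partial_{\xi_k}F\Bigr)\,d\xi_k+\Bigl(\sum_i\xi_i\,\partial_{\xi_i}\partial_\omega F\Bigr)\,d\omega,
\]
and since $F(\omega,\cdot)$ is homogeneous of order~$1$, differentiating the Euler relation $\sum_i\xi_i\,\partial_{\xi_i}F=F$ in $\xi_k$ resp.\ in $\omega$ yields $\sum_i\xi_i\,\partial_{\xi_i}\partial_{\xi_k}F=0$ resp.\ $\sum_i\xi_i\,\partial_{\xi_i}\partial_\omega F=\partial_\omega F$, so that $j^*\alpha=\partial_\omega F(\omega,\xi)\,d\omega$.

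Next I would express $\alpha|_{T\Lambda}$ in terms of $\Phi$. For any $V$ tangent to $\Lambda$, using $\xi\partial_\xi=\Phi H_p+\Theta H_G$ on $\Lambda$ (equation~\eqref{e:Phi-new-def}), the identity $\iota_{H_p}\sigma=-dp$, and the fact that $\sigma(H_G,V)=-dG(V)=0$ since $G$ vanishes on $\Lambda$ and $V\in T\Lambda$, one gets
\[
\alpha(V)=\sigma(\xi\partial_\xi,V)=\Phi\,\sigma(H_p,V)+\Theta\,\sigma(H_G,V)=-\Phi\,dp(V).
\]
Hence $\alpha|_{T\Lambda}=-\Phi\,dp|_{T\Lambda}$ as one‑forms on $\Lambda$. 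Pulling this back by $j$ and inserting the formulas for $j^*\alpha$ and $j^*(dp|_{T\Lambda})$ gives $\partial_\omega F(\omega,\xi)\,d\omega=-\Phi(\partial_\xi F(\omega,\xi),\xi)\,d\omega$, which is~\eqref{e:phase-der-2}.

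The only delicate point is bookkeeping of sign conventions for $\sigma$, $H_p$ and $\alpha$ (the final sign in~\eqref{e:phase-der-2} is in fact independent of them, since flipping the sign of $\sigma$ flips the signs of both $\iota_{\xi\partial_\xi}\sigma=\alpha$ and $\iota_{H_p}\sigma=-dp$), together with the observation that~\eqref{e:Phi-new-def} is an equality of vectors tangent to $\Lambda$, so that $\sigma$ is only ever paired against such vectors; there is no substantive difficulty beyond this.
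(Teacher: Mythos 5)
Your proof is correct and is essentially the paper's argument rewritten in invariant language: pairing $\alpha=\iota_{\xi\partial_\xi}\sigma$ against $j_*\partial_\omega$ reproduces exactly the paper's computation of taking the $\partial_\xi$-component of $\xi\partial_\xi=\Phi H_p+\Theta H_G$, contracting with $\partial_\xi\partial_\omega F$, and invoking Euler's identity for the degree-one homogeneous function $\partial_\omega F$. The sign bookkeeping ($\iota_{H_p}\sigma=-dp$, $\iota_{\xi\partial_\xi}\sigma=\alpha$) is consistent, so there is nothing to fix.
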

\begin{proof}
Let $G$ be a (local) defining function of $\Lambda$. Taking the $\partial_\xi$-component of~\eqref{e:Phi-new-def} 
at a point $\zeta:=(\partial_\xi F(\omega,\xi),\xi)\in\Lambda$ we have
\begin{equation}
  \label{e:clafouti-1}
\xi=-\Phi(\zeta)\partial_x p(\zeta)-\Theta(\zeta)\partial_x G(\zeta).
\end{equation}
On the other hand, differentiating in~$\omega$ the identities
$$
p(\partial_\xi F(\omega,\xi),\xi)=\omega,\quad
G(\partial_\xi F(\omega,\xi),\xi)=0
$$
we get
\begin{equation}
  \label{e:clafouti-2}
\langle \partial_x p(\zeta),\partial_\xi\partial_\omega F(\omega,\xi) \rangle=1,\quad
\langle \partial_x G(\zeta),\partial_\xi\partial_\omega F(\omega,\xi) \rangle=0.
\end{equation}
Combining~\eqref{e:clafouti-1} and~\eqref{e:clafouti-2} we arrive  {at}
$$
\langle\xi,\partial_\xi \partial_\omega F(\omega,\xi)\rangle=-\Phi(\zeta)
=-\Phi(\partial_\xi F(\omega,\xi),\xi)
$$
which implies~\eqref{e:phase-der-2} since the function $\xi\mapsto \partial_\omega F(\omega,\xi)$
is homogeneous of order~1.
\end{proof}
Now we specialize to the Lagrangian families used in this paper.
We start with a sign condition on $\Phi$  which will be used in \S \ref{asr}:
\begin{lemm}
  \label{l:Phi-sign}
Suppose that for $\Lambda=\Lambda^+$
or $\Lambda=\Lambda^-$, with $\Lambda^\pm$ given in~\eqref{e:Lambda-pm-def}
we define $ \Phi^\pm $ using~\eqref{e:Phi-new-def}.
Then for some constant $c>0$
\begin{equation}
  \label{e:Phi-sign}
\pm \Phi^\pm(x,\xi)\geq c|\xi|\quad\text{on}\quad \Lambda^\pm.
\end{equation}
\end{lemm}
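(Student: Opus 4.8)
The plan is to exploit the dynamical characterization of $\Phi^\pm$ provided by~\eqref{e:Phi-prop-1}, which says that $H_p\Phi^\pm\equiv 1$ along the flow. Since on $\partial\overline T^*M$ the rescaled field $X=\kappa_*(|\xi|H_p)$ is tangent to $\Sigma$ and $L^+_0$ is an attracting hyperbolic cycle, every trajectory in $\Lambda^+$ converges to $\Lambda^+_0$ in infinite time as $t\to+\infty$ (after rescaling by $|\xi|$), while its covector magnitude $|\xi|$ shrinks to $0$ under $e^{tH_p}$ — this is exactly the content of Lemma~\ref{l:sink-established} identifying $L^+_0$ as a radial sink. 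So the idea is to integrate $H_p\Phi^+=1$ along a trajectory from a point $(x,\xi)\in\Lambda^+$ all the way to its limit on $\Lambda^+_0$, and to control the two boundary contributions.

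Concretely, first I would reparametrize the Hamiltonian flow of $H_p$ on $\Lambda^+$ by the new time $s$ with $ds=|\xi|^{-1}\,dt$, so that in the $s$-variable the flow is generated by $|\xi|H_p$, which is the field whose pushforward is $X$; the flow in $s$ extends smoothly to the boundary and is the lift of the $X$-flow on $\Sigma$, so trajectories reach $L^+_0$ in infinite $s$-time with exponential rate given by the (strictly negative on the stable side) Lyapunov exponent $\log|c_1|/T$ of the Poincaré map. Under this reparametrization $H_p\Phi^+=1$ becomes $\frac{d}{ds}\Phi^+=|\xi(s)|$ along the trajectory. Integrating from the given point to $s=+\infty$ gives $\Phi^+(x,\xi)=\Phi^+_\infty-\int_0^\infty |\xi(s)|\,ds$, where $\Phi^+_\infty$ denotes the (finite, by homogeneity and compactness) limiting value of $\Phi^+$ on $\Lambda^+_0$; but I must double-check the sign of the time direction, since the radial-sink geometry means $|\xi|$ decays as $t\to+\infty$, hence the integral $\int_0^\infty|\xi(s)|\,ds$ converges, and up to orientation this yields $\pm\Phi^+(x,\xi)\ge c|\xi|$ with $c$ coming from the uniform exponential decay rate. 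More carefully: along the incoming $s$-flow $|\xi(s)|$ decays like $e^{-\lambda s}|\xi|$ for a uniform $\lambda>0$ (compactness of the cross-section $L^+_0$ and hyperbolicity give a uniform rate), so $\int |\xi(s)|\,ds \sim \lambda^{-1}|\xi|$, which produces the linear-in-$|\xi|$ lower bound. Homogeneity of $\Phi^+$ of order~1 (established above) makes it enough to prove the bound on the compact cross-section $\{|\xi|=1\}\cap\Lambda^+$, which is what the uniformity furnishes.

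The main obstacle I anticipate is handling the limiting boundary term $\Phi^+_\infty$ on $\Lambda^+_0$: a priori this need not be zero (or of the right sign), so one cannot simply say $\Phi^+=-\int|\xi(s)|\,ds$. The resolution is that on $\Lambda^+_0$ itself the flow is periodic in $s$ (period $T$, since $e^{TX}=I$ there), so $\frac{d}{ds}\Phi^+=|\xi|$ restricted to a periodic orbit forces $\oint |\xi|\,ds=0$ unless $\Phi^+$ is unbounded — but $|\xi|>0$, a contradiction, unless $L^+_0$ is actually at fiber infinity where $\rho=|\xi|^{-1}=0$, i.e. $|\xi|=\infty$; the honest statement is that $\Phi^+$ is a homogeneous-degree-one function, hence on $\Lambda^+_0$ it is determined by its restriction to a single fiber circle, and one checks that the invariant one-form $d\Phi^+$ integrated around the lifted periodic orbit is controlled. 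I would instead argue more robustly: fix a point $\zeta_0\in L^+_0$, take the trajectory through $(x,\xi)$, and compare $\Phi^+(x,\xi)$ with $\Phi^+$ at the "footpoint" $\zeta(x,\xi)\in\Lambda^+_0$ reached in infinite time; by the uniform exponential contraction and the fact that $|\xi|$ along the trajectory is $\asymp e^{-\lambda s}|\xi|$, the difference $\Phi^+(x,\xi)-\Phi^+(\zeta) = -\int_0^\infty|\xi(s)|ds$ is of size $\asymp|\xi|$ with a definite sign, while $\Phi^+(\zeta)$ scales like the value of $\Phi^+$ on $\Lambda^+_0$ at the appropriate homogeneity, which — and this is the point to verify carefully — has the same sign by a direct computation on one cycle, or is absorbed by choosing the generating function $F$ in Lemma~\ref{l:phase-der} suitably. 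Pinning down that the limiting value has the correct sign, or showing it drops out, is where the real work lies; everything else is uniform hyperbolicity bookkeeping and the homogeneity reduction to a compact set.
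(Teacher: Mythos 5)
Your starting point --- integrating the identity $H_p\Phi^+\equiv 1$ from \eqref{e:Phi-prop-1} along the flow and converting the elapsed time into a multiple of $|\xi|$ --- is the right one, and it is indeed the mechanism behind the paper's proof. But your execution contains a fatal orientation error. A radial sink lies at fiber \emph{infinity}: the boundary defining function is $\rho=|\xi|^{-1}$, and it is $\rho$, not $|\xi|$, that tends to $0$ as trajectories converge to $L^+_\omega$. So along the forward (rescaled) flow toward the sink $|\xi(s)|$ grows exponentially; the integral $\int_0^\infty|\xi(s)|\,ds$ you write down diverges, and the ``limiting value $\Phi^+_\infty$'' of a degree-one homogeneous function at a point with $|\xi|=\infty$ is not finite. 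Even if one accepts your (incorrect) picture that $|\xi(s)|\to 0$ and that $\Phi^+_\infty$ is finite, the resulting formula $\Phi^+(x,\xi)=\Phi^+_\infty-\int_0^\infty|\xi(s)|\,ds$ would make $\Phi^+$ \emph{negative} on $\Lambda^+$, the opposite of what the lemma asserts. You correctly flag that controlling the boundary term at the sink ``is where the real work lies,'' but you do not resolve it, and as set up it cannot be resolved: there is no finite footpoint value on $\Lambda^+_0$ to compare with.

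The fix is to integrate in the opposite direction and to stop at a compact set rather than at the sink. Take $(x,\xi)\in\Lambda^+$ with $|\xi|$ large and flow \emph{backwards} under the unrescaled field $H_p$ until the trajectory reaches the unit cosphere bundle $S^*M$; because $|\xi|$ decays exponentially in the rescaled time $s$ going backwards (this is the radial-sink property of Lemma~\ref{l:sink-established}) and $dt=|\xi|\,ds$, the unrescaled time needed is $t\sim|\xi|$. Then $H_p\Phi^+\equiv 1$ gives $\Phi^+(x,\xi)=\Phi^+\bigl(e^{-tH_p}(x,\xi)\bigr)+t$; the first term is bounded because $\Lambda^+\cap S^*M$ is compact, and the second is bounded below by $c|\xi|-C$. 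This yields $\Phi^+\geq c|\xi|$ for $|\xi|$ large, and homogeneity of degree one extends the bound to all of $\Lambda^+$. No boundary term at the sink ever appears, and no separate sign analysis on $\Lambda^+_0$ is needed.
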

\begin{proof}
We consider the case of $\Phi^+$ as the case of $\Phi^-$ is handled by
replacing $p$ with $-p$.
Recall from Lemma~\ref{l:sink-established}
that each $L^+_\omega=\kappa(\Lambda^+\cap p^{-1}(\omega))$ is a radial sink
for the flow $e^{t|\xi|H_p}$. Take $(x,\xi)\in \Lambda^+$ with $|\xi|$ large.
Then (with $S^*M$ denoting the cosphere bundle with respect to any fixed metric on~$M$)
\begin{equation}
  \label{e:lynmar}
e^{-tH_p}(x,\xi)\in S^*M\quad\text{for some}\quad t>0,\quad
t\sim |\xi|.
\end{equation}
Recall from~\eqref{e:Phi-prop-1} that $H_p\Phi^+=1$ on $\Lambda^+$. Thus
$$
\Phi^+(x,\xi)=\Phi^+(e^{-tH_p}(x,\xi))+t\geq  {c}|\xi|-C.
$$
It follows that $\Phi^+(x,\xi)\geq c|\xi|$ for large $|\xi|$; since $\Phi^+$ is homogeneous
of order~1, this inequality then holds on the entire $\Lambda^+$.
\end{proof}
We next construct adapted global defining functions of $\Lambda^\pm$ used in~\S\ref{s:lagreg}:
\begin{lemm}
  \label{l:G-construction}
Let $\Lambda^\pm$ be defined in~\eqref{e:Lambda-pm-def}. Then there exist 
$G_\pm\in C^\infty(T^*M\setminus 0;\mathbb R)$ such that:
\begin{enumerate}
\item $G_\pm$ are homogeneous of order~1;
\item $G_\pm|_{\Lambda^\pm}=0$ and $dG_\pm|_{\Lambda^\pm}\neq 0$;
\item $H_p G_\pm = a_\pm G_\pm$ in a neighborhood of $\Lambda^\pm$,
where $a_\pm\in C^\infty(T^*M\setminus 0;\mathbb R)$ are homogeneous
of order~$-1$ and $a_\pm|_{\Lambda^\pm}=0$.
\end{enumerate}
\end{lemm}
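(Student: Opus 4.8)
The plan is to construct $G_\pm$ by first building a local defining function near $\Lambda^\pm$ that is invariant (up to the required factor) under the Hamiltonian flow, and then patching it into a global homogeneous function. I will treat $G_+$ only, the case of $G_-$ being symmetric under $p\mapsto -p$. Recall from \eqref{e:Lambda-pm-def} that $\Lambda^+=\bigcup_{|\omega|<2\delta}\Lambda^+_\omega$ is a conic hypersurface, that $H_p$ is tangent to it (each $\Lambda^+_\omega$ being flow-invariant), and that $dp|_{T\Lambda^+}\neq 0$ since the $\Lambda^+_\omega$ are distinct level sets of $p$ inside $\Lambda^+$.

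\emph{Step 1: a local defining function.} Fix a point $(x_0,\xi_0)\in L^+_0=\Lambda^+\cap p^{-1}(0)\cap S^*M$ on the sink. Near such a point choose any smooth homogeneous-degree-one function $g$ with $g|_{\Lambda^+}=0$, $dg|_{\Lambda^+}\neq 0$; for instance, take a hypersurface transverse to $X$ on $\Sigma$, thicken it conically and use it to cut out $\Lambda^+$ locally. I expect $H_p g = b\, g + r$ for some smooth $r$ vanishing on $\Lambda^+$ and some degree-$(-1)$ function $b$; the issue is to kill $r$. Since the sink $L^+_0$ is attracting, the backward flow $e^{-tH_p}$ is (after rescaling by $|\xi|$, via Lemma~\ref{l:sink-established}) a contraction normal to $\Lambda^+$ on the relevant time scale, so one can solve the relevant transport equation by averaging along the flow. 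Concretely, one seeks $G_+=e^{\psi}g+(\text{correction})$ or solves $H_p G_+ = a_+ G_+$ as a Hamilton--Jacobi-type problem transported from a cross-section of the flow near $L^+_0$; the radial-sink structure guarantees convergence of the relevant integrals. This produces $G_+$ with the desired property in a conic neighborhood $U_+$ of $L^+_0$.

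\emph{Step 2: propagate along the flow.} The set $\Lambda^+$ is the union of backward trajectories emanating from a neighborhood of $L^+_0$ inside $\Lambda^+$ — more precisely, since $L^+_0$ is the attractor, every point of $\Sigma$ in a neighborhood of $L^+_0$ flows into $U_+$, and $\Lambda^+$ is swept out; but $\Lambda^+$ itself is only a neighborhood of $L^+_0$ in the family sense (over $|\omega|<2\delta$), and near $L^+_\omega$ the same construction works uniformly in $\omega$. Transporting $G_+$ from $U_+$ along $e^{tH_p}$ and requiring $H_p G_+ = a_+ G_+$ forces $G_+(e^{tH_p}(y))=\exp\big(\int_0^t a_+(e^{sH_p}(y))\,ds\big)G_+(y)$; choosing $a_+$ first (any smooth degree-$(-1)$ extension of $0$ from $\Lambda^+$, e.g. $a_+\equiv 0$ works if $H_p g=r$ with $r$ flat enough, but in general keep it) determines $G_+$ on a full conic neighborhood of $\Lambda^+$, and it stays smooth, homogeneous of order $1$, with $dG_+\neq 0$ on $\Lambda^+$ because this is an open condition preserved by the flow. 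Finally multiply by a cutoff $\chi$ equal to $1$ near $\Lambda^+$ and supported in the conic neighborhood, and extend by an arbitrary smooth homogeneous function elsewhere to get a global $G_+\in C^\infty(T^*M\setminus 0;\mathbb R)$; properties (1) and (2) persist, and (3) holds \emph{in a neighborhood of} $\Lambda^+$ as required. Replacing $p$ by $-p$ (which swaps sink and source, cf. Lemma~\ref{l:sink-established}) and relabeling gives $G_-$, with $a_-|_{\Lambda^-}=0$ and $a_-$ homogeneous of order $-1$.

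\emph{Main obstacle.} The delicate point is Step 1: solving $H_p G_+ = a_+ G_+$ transversally to $\Lambda^+$ near the sink, i.e. arranging that the normal derivative of $G_+$ is flow-covariant rather than picking up an inhomogeneous term. This is exactly where the radial-sink hypothesis from Lemma~\ref{l:sink-established} is used — the linearized Poincar\'e map of the rescaled flow on the normal bundle to $L^+_0$ has spectral radius $<1$, so the natural integral/series defining the corrected $G_+$ (or equivalently, the stable manifold of $\Lambda^+$ being parametrized smoothly by $\Lambda^+$ itself) converges and is smooth. Everything else — homogeneity bookkeeping, the cutoff/extension, and the $p\mapsto -p$ symmetry — is routine.
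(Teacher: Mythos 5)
There is a genuine gap, and it sits exactly where you locate the ``main obstacle.'' First, your Step~1 misidentifies what has to be proved: for \emph{any} homogeneous defining function $g$ of $\Lambda^+$, the function $H_pg$ vanishes on $\Lambda^+$ (because $H_p$ is tangent to $\Lambda^+$ and $g|_{\Lambda^+}=0$), so $H_pg=b\,g$ near $\Lambda^+$ automatically — there is no inhomogeneous term $r$ to kill. The entire content of condition~(3) is the extra requirement $a_+|_{\Lambda^+}=0$, i.e.\ that $H_pG_+$ vanish to \emph{second} order on $\Lambda^+$. Writing $G_+=e^\psi g$, this amounts to solving the cohomological equation $H_p\psi=-b$ \emph{on} $\Lambda^+$, with $\psi$ homogeneous of order~$0$. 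That is an equation along the closed cycles $L^\pm_\omega$, and the transverse contraction of the radial sink (spectral radius $<1$ of the Poincar\'e map normal to $\Lambda^+$) is irrelevant to it: on $\Lambda^+$ itself the rescaled flow is periodic, the ``natural integral along the flow'' does not converge, and solvability of such an equation over a closed orbit requires a vanishing-mean condition that you never verify. So the dynamical averaging you invoke does not produce the required $G_+$, and Step~2 then has nothing to propagate.

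The paper's proof is purely algebraic and avoids the dynamics altogether. One first takes \emph{any} global $\widetilde G_+$ satisfying (1)--(2); this exists at once because each component of $\Lambda^+$ is diffeomorphic to $[-\delta,\delta]\times\mathbb S^1\times\mathbb R^+$, hence orientable — no patching along the flow is needed. Writing $\xi\partial_\xi=\Phi_+H_p+\Theta_+H_{\widetilde G_+}$ on $\Lambda^+$ as in~\eqref{e:Phi-new-def}, one checks $\Theta_+$ is homogeneous of order~$0$ and nonvanishing (since $H_p$ is not radial on $\Lambda^+$), and sets $G_+:=\Theta_+\widetilde G_+$ near $\Lambda^+$, so that $\xi\partial_\xi=\Phi_+H_p+H_{G_+}$ there. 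Then $H_pG_+=a_+G_+$ near $\Lambda^+$ for some $a_+$ (automatic, as above, with homogeneity of order $-1$ following from that of $G_+$ and $H_p$), and commuting the identity $\xi\partial_\xi=\Phi_+H_p+H_{G_+}$ with $H_p$, using $[H_p,\xi\partial_\xi]=H_p$ and $H_p\Phi_+\equiv1$ from~\eqref{e:Phi-prop-1}, gives $H_p=H_p+a_+H_{G_+}$ on $\Lambda^+$, whence $a_+|_{\Lambda^+}=0$ since $H_{G_+}\neq0$ there. In effect the cohomological equation you would need to solve dynamically is solved in closed form by $\psi=\log\Theta_+$, and the reason it is solvable is the structural identity $H_p\Phi_+\equiv1$, not the attracting character of $L^+_0$. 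If you want to salvage your outline, you must replace the averaging argument of Step~1 by this (or an equivalent) verification that the mean of $b$ along each cycle vanishes; as written, the key point is asserted rather than proved.
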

\begin{proof} We construct $G_+$, with $G_-$ constructed similarly.
Fix some function $\widetilde G_+$ which satisfies conditions~(1)~and~(2) of the
present lemma. It exists since $\Lambda^+$ is conic and orientable
(each of its connected components is diffeomorphic to~$[-\delta,\delta]\times \mathbb S^1\times\mathbb R^+$).
Let $\Theta_+$ be defined in~\eqref{e:Phi-new-def}:
\begin{equation}
  \label{e:gcon-1}
\xi\partial_\xi=\Phi_+H_p+\Theta_+H_{\widetilde G_+}\quad\text{on}\quad\Lambda^+.
\end{equation}
Commuting both sides of~\eqref{e:Phi-new-def} with $\xi\partial_\xi$
we see that $\Theta_+$ is homogeneous of order~0.
Moreover $\Theta_+$ does not vanish on $\Lambda^+$ since
$H_p$ is not radial (since the flow of $ X $ in \eqref{eq:defSig} has no fixed points). Choose $G_+$ satisfying conditions~(1)~and~(2) and such that
$$
G_+=\Theta_+\widetilde G_+\quad\text{near }\Lambda^+.
$$
Then~\eqref{e:gcon-1} gives
\begin{equation}
  \label{e:gcon-2}
\xi\partial_\xi=\Phi_+ H_p+H_{G_+}\quad\text{on}\quad\Lambda^+.
\end{equation}
We have $H_p G_+|_{\Lambda^+}=0$
 {(since $H_p$ is tangent to $\Lambda^+$)}, therefore $H_pG_+=a_+G_+$ near $\Lambda^+$
for some function $a_+$. Commuting both sides of~\eqref{e:gcon-2} with $H_p$
and using that $H_p\Phi_+\equiv 1$ on $\Lambda^+$ from~\eqref{e:Phi-prop-1} we have
$$
H_p=[H_p,\xi\partial_\xi]=H_p+[H_p,H_{G_+}]=H_p+H_{\{p,G_+\}}
=H_p+a_+H_{G_+}\quad\text{on}\quad\Lambda^+.
$$
Since $ H_{G_+} $ does not vanish on $ \Lambda^+ $, this gives $a_+|_{\Lambda^+}=0$
as needed.
\end{proof}
One application of Lemma~\ref{l:G-construction} is the existence of an $H_p$-invariant
density on $\Lambda^\pm$:
\begin{lemm}
  \label{l:density}
There exist densities $\nu^\pm_\omega$ on $\Lambda^\pm_\omega$, $\omega\in [-\delta,\delta]$, such that:
\begin{itemize}
\item $\nu^\pm_\omega$ are homogeneous of order~1, that is $\mathcal L_{\xi\partial_\xi}\nu^\pm_\omega=\nu^\pm_\omega$;
\item $\nu^\pm_\omega$ are invariant under $H_p$, that is $\mathcal L_{H_p}\nu^\pm_\omega=0$.
\end{itemize}
\end{lemm}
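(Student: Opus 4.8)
The goal is to produce on each $\Lambda^\pm_\omega$ a density that is homogeneous of order $1$ under $\xi\partial_\xi$ and invariant under $H_p$. Since the problem is symmetric in $\pm$ (replace $p$ by $-p$), I will only treat $\Lambda^+$, and I will work one $\omega$ at a time (the $\omega$-dependence is along for the ride and causes no trouble since everything below is smooth in $\omega$). The key structural input is Lemma~\ref{l:G-construction}: it gives a homogeneous-degree-$1$ defining function $G_+$ of $\Lambda^+$ with $H_pG_+=a_+G_+$ near $\Lambda^+$ and $a_+|_{\Lambda^+}=0$, together with the identity $\xi\partial_\xi=\Phi_+H_p+H_{G_+}$ on $\Lambda^+$ from~\eqref{e:gcon-2}. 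On $\Lambda^+_\omega$ the two vector fields $H_p$ and $H_{G_+}$ form a global frame (by the discussion in \S\ref{s:lagrangian-geometry}, since $dp|_{T\Lambda^+}\neq 0$ and $H_{G_+}\neq 0$ on $\Lambda^+$).

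First I would define $\nu^+_\omega$ as the density dual to this frame: declare $\nu^+_\omega$ to be the density on $\Lambda^+_\omega$ for which $\langle \nu^+_\omega, H_p\wedge H_{G_+}\rangle=1$ — equivalently, writing $\Lambda^+_\omega$ locally with coordinates in which $H_p=\partial_s$ (flow time) and completing to a coordinate system $(s,y)$ adapted to the frame, take $\nu^+_\omega$ proportional to the corresponding coordinate density with the constant fixed by the pairing above. Then I check the two properties. Invariance under $H_p$: since $[H_p,H_{G_+}]=H_{\{p,G_+\}}=H_{a_+G_+}=a_+H_{G_+}+G_+H_{a_+}$, and on $\Lambda^+$ this equals $a_+H_{G_+}$ which vanishes because $a_+|_{\Lambda^+}=0$, we get $[H_p,H_{G_+}]=0$ on $\Lambda^+$; hence $\mathcal L_{H_p}(H_p\wedge H_{G_+})=0$ on $\Lambda^+$, so $\mathcal L_{H_p}\nu^+_\omega=0$. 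Homogeneity of order $1$: from~\eqref{e:gcon-2}, $[\xi\partial_\xi,H_p]=-H_p$ and $[\xi\partial_\xi,H_{G_+}]=[H_{G_+},H_{G_+}]+[\Phi_+H_p,\cdot]$-type bookkeeping — more cleanly, $G_+$ homogeneous of order $1$ gives $[\xi\partial_\xi,H_{G_+}]=0$, and $[\xi\partial_\xi,H_p]=-H_p$ since $p$ is homogeneous of order $0$; therefore $\mathcal L_{\xi\partial_\xi}(H_p\wedge H_{G_+})=-H_p\wedge H_{G_+}$, and dualizing, $\mathcal L_{\xi\partial_\xi}\nu^+_\omega=\nu^+_\omega$.

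The one point requiring a little care is well-definedness and smoothness: I should note that, although $a_+$ and $G_+$ do not individually vanish off $\Lambda^+$, the computations of $\mathcal L_{H_p}\nu^+_\omega$ and $\mathcal L_{\xi\partial_\xi}\nu^+_\omega$ only involve the restrictions of $H_p$, $H_{G_+}$, and their brackets to $T\Lambda^+$, where the identities above hold exactly; and $\nu^+_\omega$, being the reciprocal of the (nowhere vanishing) function $\langle \nu^+_{\mathrm{coord}},H_p\wedge H_{G_+}\rangle$ times a smooth coordinate density, is smooth and nowhere vanishing on $\Lambda^+_\omega$. If $L^+_\omega$ consists of several closed curves, one simply performs this construction on each connected component of $\Lambda^+_\omega$ separately. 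I expect the only genuine obstacle — and it is mild — is keeping straight that $[H_p,H_{G_+}]$ need not vanish off $\Lambda^+$, so the invariance is an identity of vector fields \emph{restricted to} $\Lambda^+$; once that is granted, the construction and verification are immediate from Lemma~\ref{l:G-construction}.
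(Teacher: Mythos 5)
Your proof is correct. It rests on exactly the same input as the paper's — the defining function $G_\pm$ from Lemma~\ref{l:G-construction}, its homogeneity of order~$1$, and the relation $H_pG_\pm=a_\pm G_\pm$ with $a_\pm|_{\Lambda^\pm}=0$ — but runs the verification on the vector-field side rather than the form side. The paper defines $\nu^\pm_\omega$ by the factorization $|\sigma\wedge\sigma|=|dp\wedge dG_\pm|\times\nu^\pm_\omega$ and checks the two properties from $\mathcal L_{\xi\partial_\xi}\sigma=\sigma$, $\mathcal L_{H_p}\sigma=0$, $\mathcal L_{\xi\partial_\xi}dG_\pm=dG_\pm$ and $\mathcal L_{H_p}(dp\wedge dG_\pm)=dp\wedge d(a_\pm G_\pm)=0$ on $\Lambda^\pm$; you instead take the density dual to the tangent frame $(H_p,H_{G_+})$ and compute Lie derivatives via $[H_p,H_{G_+}]=H_{a_+G_+}=a_+H_{G_+}+G_+H_{a_+}$, $[\xi\partial_\xi,H_p]=-H_p$ and $[\xi\partial_\xi,H_{G_+}]=0$. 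These are dual formulations of the same construction: since $\sigma(H_p,H_{G_+})=\{G_+,p\}=0$ on $\Lambda^+$, the two densities actually coincide up to a constant factor on $\Lambda^+_\omega$. The one point of care you flag — that the bracket identities hold only after restriction to $\Lambda^+$ — is handled correctly: all three vector fields are tangent to $\Lambda^+_\omega$, so the intrinsic brackets agree with the restrictions of the ambient ones, and the Lie derivative of the dual density only involves these. What the paper's form-side version buys is that one never has to discuss the restriction issue or the pointwise nondegeneracy of the frame explicitly (the independence of $dp$ and $dG_\pm$ along $\Lambda^\pm$ is immediate from $dp|_{T\Lambda}\neq0$ and $dG_\pm|_{T\Lambda}=0$); what yours buys is a slightly more hands-on verification that makes the role of $[H_p,H_{G_+}]|_{\Lambda^+}=0$ transparent.
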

\begin{proof}
In the notation of Lemma~\ref{l:G-construction} define $\nu^\pm_\omega$ by
$
|\sigma\wedge \sigma|=|dp\wedge dG_\pm|\times \nu^\pm_\omega
$
where $\sigma$ is the symplectic form. The properties of $\nu^\pm_\omega$ follow from the identities
$$
\mathcal L_{\xi\partial_\xi}\sigma=\sigma,\quad
\mathcal L_{\xi\partial_\xi}dp=0,\quad
\mathcal L_{\xi\partial_\xi}dG_\pm=dG_\pm,\quad
\mathcal L_{H_p}\sigma=0
$$
and the following statement which holds on $\Lambda^\pm$:
$$
\hspace{1.6in} \mathcal L_{H_p}(dp\wedge dG_\pm)=dp\wedge d(a_\pm G_\pm)=0. \hspace{1.6in}\qedhere
$$
\end{proof}

\section{Resolvent estimates}
\label{reso}

Here we recall the radial estimates as presented in \cite[\S E.4]{res} 
specializing to the setting of \S \ref{ass}. We use the notation of~\cite[Appendix~E]{res}
and we write
$\|u\|_s:=\|u\|_{H^s(M)}$.

Since we are not in the semiclassical setting 
of \cite[\S E.4]{res} we will only use the usual notion of the wave front
set: for $ u \in \mathscr D' ( M ) $, $ \WF ( u ) \subset
T^* M \setminus 0 $ -- see \cite[Exercise~E.16]{res}.
Similarly, for
$A\in \Psi^k(M)$
we denote by $\Ell(A)\subset T^*M\setminus 0$ its (nonsemiclassical) elliptic set.
Both sets are conic.

\subsection{Radial estimates uniformly up to the real axis}
\label{rad} 

Since $ L^-_\omega  $ is a radial source we can apply 
\cite[Theorem~E.52]{res} (with $ h := 1 $) 
to the operator
$$
\widetilde P_\epsilon:=\widetilde P-i\epsilon\langle D\rangle\in\Psi^1(M),\quad
\widetilde P:=\langle D\rangle^{1/2} (P - \omega)\langle D\rangle^{1/2},\quad
 {0\leq\epsilon\ll 1}.
$$
Here, since $\widetilde P$ is self-adjoint, the threshold regularity condition
\cite[(E.4.39)]{res} is satisfied for $\widetilde P$ with any $ s >0 $.
Strictly speaking one has to modify the proof of~\cite[Theorem~E.52]{res}
to include the antiselfadjoint part $-i\epsilon \langle D\rangle$ which has a favorable
sign but is of the same differential order as $\widetilde P$.
(In~\cite{res} it was assumed that the principal symbol of $P$ is real-valued
near $L^-_\omega$.)
More precisely, we put $\mathbf P:=\widetilde P$ and $f:=\widetilde P_\epsilon u$
(instead of $f:=\widetilde P u$)
in~\cite[Theorem~E.52]{res}.
Since $\widetilde P_\epsilon$ satisfies the sign condition
for propagation of singularities~\cite[Theorem~E.47]{res},
it suffices to check that the positive commutator estimate~\cite[Lemma~E.49]{res}
holds. For that we write
\begin{equation}
  \label{e:crown}
\Im\langle f,G^*Gu\rangle_{L^2}=\Im\langle \widetilde Pu,G^*Gu\rangle_{L^2}-
\epsilon\Re\big\langle \langle D\rangle u,G^*Gu\big\rangle_{L^2}.
\end{equation}
Here $G\in\Psi^s(M)$ is the quantization of an escape function
used in the proof of~\cite[Lemma~E.49]{res};
recall that we put $h:=1$. We now estimate
the additional term in~\eqref{e:crown}:
$$
\begin{aligned}
-\Re\big\langle \langle D\rangle u,G^*Gu\big\rangle_{L^2}
&=-\|\langle D\rangle^{1/2}Gu\|_{L^2}^2+\langle \Re(G^*[\langle D\rangle,G])u,u\rangle_{L^2}\\
&\leq C\|B_1u\|_{s-1/2}^2+C\|u\|_{H^{-N}}^2
\end{aligned}
$$
where  {$B_1$ satisfies the properties in the statement of~\cite[Lemma~E.49]{res}} and in the last line we used that
$G^*[\langle D\rangle,G]\in\Psi^{2s}(M)$ has purely imaginary principal symbol
and thus $\Re(G^*[\langle D\rangle,G])\in \Psi^{2s-1}(M)$.
The rest of the proof of~\cite[Lemma~E.49]{res} applies without changes.
See also~\cite[Lemma~3.7]{DG}.

Applying the radial estimate in~\cite[Theorem~E.52]{res} for the operator
$\widetilde P_\epsilon=\langle D\rangle^{1/2}(P-\omega-i\epsilon)\langle D\rangle^{1/2}$ to $\langle D\rangle^{-1/2}u$
we see that for every $ \widetilde B_- \in \Psi^0(M) $, $ 
\Lambda^- \subset \Ell ( \widetilde B_- ) $ there exists
$ A_- \in \Psi^0 ( M ) $, $  \Lambda^-  
\subset \Ell( A_- ) $, such that
\begin{equation}
\label{eq:rad_source}
\begin{gathered} 
\| A_- u \|_{ s } \leq C \|\widetilde B_-( P - \omega - i \epsilon ) u \|_{ s+1} + 
C \| u \|_{ -N } , \\
u \in C^\infty ( M ) , \quad s > -\tfrac12 ,\quad
|\omega|\leq\delta,\quad
\epsilon \geq 0 , 
\end{gathered}
\end{equation}
where $ C $ does not depend on $ \epsilon, \omega $ and $ N$ can be chosen arbitrarily large.
The supports of $ A_-$, $\widetilde B_-$ are shown on  Figure~\ref{f:3}.

The inequality~\eqref{eq:rad_source} can be 
extended to a larger class of distributions
 {(as opposed to $u\in C^\infty(M)$)}: it suffices that  $  \widetilde B_- (P -\omega - i \epsilon )u \in H^{ s + 1} ( M ) $ and that 
$ A_- u \in H^{s'} ( M ) $ for some $ s' > -\frac12 $.
See Remark~5 after \cite[Theorem~E.52]{res} or
\cite[Proposition~2.6]{DZ}, \cite[Proposition~2.3]{Va}.
\begin{figure}
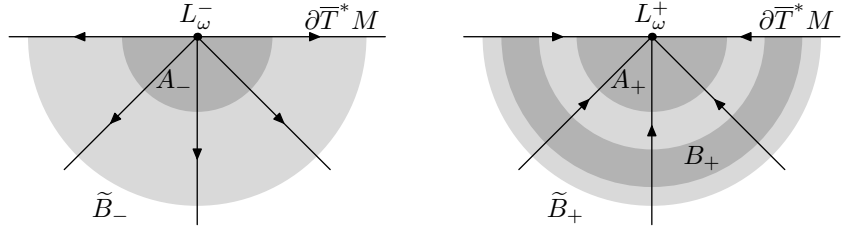

\includegraphics{flop.3}
\qquad
\includegraphics{flop.4}
\caption{An illustration of the supports of the operators
appearing in \eqref{eq:rad_source} (left: radial sources)
and \eqref{eq:rad_sink} (right: radial sinks). The horizontal line
on the top denotes $\partial\overline T^*M$, the arrows denote
flow lines of $|\xi|H_p$.}
\label{f:3}
\end{figure}

Similarly we have estimates near radial sinks~\cite[Theorem~E.54]{res}
for $L^+_\omega$.
Namely, for every $ \widetilde B_+ \in \Psi^0 ( M ) $, 
$ \Lambda^+ \subset \Ell (\widetilde B_+ ) $, there exist $ A_+,B_+ \in \Psi^0(M) $, such that $  \Lambda^+  
\subset {\rm{ell}}( A_+ ) $,
$ \WF ( B_+ ) \cap \Lambda^+ =\emptyset $, and 
\begin{equation}
\label{eq:rad_sink}
\begin{gathered}
\| A_+ u \|_{ s } \leq C \| \widetilde B_+ ( P - \omega - i \epsilon ) u \|_{ s+1} + 
C \| B_+ u \|_{ s } + 
C \| u \|_{ -N } , \\
u \in C^\infty ( M ) , \quad s < -\tfrac12 ,  \quad
|\omega|\leq\delta,\quad
\epsilon \geq 0 , 
\end{gathered}
\end{equation}
where $ C $ does not depend on $ \epsilon,\omega$ and $ N$ can be chosen arbitrarily large. 
The inequality is also valid for distributions $ u $ such that 
$ \widetilde B_+ ( P - \omega - i \epsilon ) u \in H^{s+1}( M )  $ and $ B_+ u \in H^{s} ( M)  $ and it then provides (unconditionally) $ A_+ u \in H^s ( M ) $~--
see Remark~2 after~\cite[Theorem~E.54]{res} or
\cite[Proposition~2.7]{DZ}, \cite[Proposition~2.4]{Va}.

Away from radial points we have the now standard propagation results of
Duistermaat--H\"ormander \cite[Theorem~E.47]{res}: if 
$ A, B, \widetilde B \in \Psi^0 ( M ) $ and 
for each $(x,\xi)\in \WF(A)$ there exists
$T\geq 0$ such that 
\[ e^{-T|\xi|H_p}(x,\xi)\in\Ell(B), \ \  e^{-t |\xi| H_p } ( x, \xi) \in 
\Ell ( \widetilde B ) , \ 0 \leq t \leq T , \]
then 
\begin{equation}
\label{eq:DH}
\begin{gathered}
\| A u \|_{ s } \leq C \| \widetilde B ( P - \omega - i \epsilon ) u \|_{ s+1} + 
C \| B u \|_{ s } + 
C \| u \|_{ -N } , \\
u \in C^\infty ( M ) ,  \quad s \in \RR , \quad |\omega|\leq\delta , \quad
\epsilon \geq 0 , 
\end{gathered}
\end{equation}
with $ C $ independent of $ \epsilon,\omega $.
{We also have the elliptic estimate~\cite[Theorem~E.33]{res}:
\eqref{eq:DH} holds with $B=0$ if $\WF(A)\cap p^{-1}([-\delta,\delta])=\emptyset$
and $\WF(A)\subset\Ell(\widetilde B)$.}

Let us now consider 
\[
 u_\epsilon = u_\epsilon ( \omega )  := ( P - \omega - i \epsilon )^{-1} f , \ \ 
f \in C^\infty ( M ) , \quad|\omega|\leq\delta,\quad \epsilon > 0 .
\] 
For any fixed $ \epsilon > 0 $, $ P - \omega - i \epsilon \in \Psi^0(M) $ is an elliptic operator
(its principal symbol equals $p-\omega-i\epsilon$ and $p$ is real-valued), thus by elliptic regularity $ u_\epsilon \in C^\infty ( M ) $. Combining 
\eqref{eq:rad_source}, \eqref{eq:rad_sink} and \eqref{eq:DH} we see that 
for any $ \beta > 0 $
\begin{equation}
\label{eq:uep1}  
\| u_\epsilon \|_{ - \frac12 - \beta } \leq C \| f \|_{ \frac12 + \beta}  + C \| u_\epsilon \|_{ -N}  , 
\end{equation}
and that 
\begin{equation}
\label{eq:uep2} \| A u_\epsilon \|_{ s } \leq C \| f \|_{s+1} + C \| u_\epsilon \|_{ -N } , \ \ 
\WF ( A ) \cap \Lambda^+  = \emptyset, \ \ s > - \tfrac12 . 
\end{equation}
Here the constant $C$ depends on $\beta,s$ but does not depend on $\epsilon,\omega$.
Indeed, by our dynamical assumption~\eqref{eq:dynaSC}
every trajectory $e^{t|\xi|H_p}(x,\xi)$ with $(x,\xi)\in p^{-1}([-\delta,\delta])\setminus\Lambda^+$ 
converges to $\Lambda^-$ as $t\to -\infty$ (see Figure~\ref{f:global}).
Applying~\eqref{eq:DH} with $B:=A_-$ and using~\eqref{eq:rad_source} we get~\eqref{eq:uep2}.
Putting $A:=B_+$ in~\eqref{eq:uep2} and using~\eqref{eq:rad_sink} we get~\eqref{eq:uep1}. 
\begin{figure}
\includegraphics{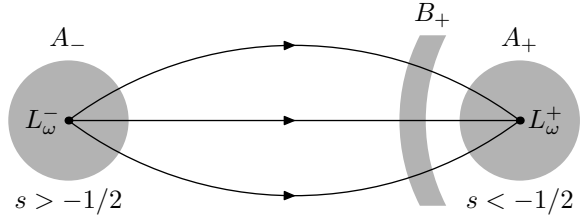}
\caption{A schematic representation of the flow
$e^{t|\xi|H_p}$ on the fiber infinity $\partial\overline T^*M$
intersected with the energy surface $p^{-1}(\omega)$,
with the regularity thresholds for the estimates~\eqref{eq:rad_source}
and~\eqref{eq:rad_sink}.}
\label{f:global}
\end{figure}

In particular, we obtain a regularity statement for the limits of the family $(u_\epsilon)$:
\begin{equation}
\label{eq:uesj}   \exists \, \epsilon_j \to 0 , \ u \in \mathscr D' ( M ) , \
 u_{\epsilon_j } \xrightarrow{ \mathscr D' ( M)  }  u  \quad
\Longrightarrow \quad u \in H^{ -\frac12 - } ( M ) , \ \ 
\WF ( u ) \subset \Lambda^+  .
\end{equation}
Note also that every $u$ in~\eqref{eq:uesj} solves the equation $(P-\omega)u=f$.

\subsection{Regularity of eigenfunctions}
\label{eig}
 
Motivated by \eqref{eq:uesj} we have the following regularity statement. 
The proof is an immediate modification of the proof of 
\cite[Lemma 2.3]{zazi}: replace $ P $ there by 
$ A^{-1} ( P - \omega ) A^{-1} $ where $ A \in  \Psi^{-\frac12} 
( M ) $ is elliptic, self-adjoint on $ L^2 (M , dm ( x ) ) $ 
(same density with respect to which $ P $ is self-adjoint) and invertible. 
We record this as
\begin{lemm}
\label{l:zazi}
Suppose that $P $ satisfies \eqref{eq:assP} and \eqref{eq:dynaSC}. Then for 
$ \omega $ sufficiently small and for $ u \in \mathscr D' ( M ) $
\[
( P - \omega ) u \in C^\infty , \quad
\WF ( u ) \subset \Lambda^+  , \quad
\Im \langle ( P - \omega ) u , u \rangle \geq 0 ,\quad
|\omega|\leq\delta
\]
implies that $ u \in C^\infty ( M ) $. 
\end{lemm}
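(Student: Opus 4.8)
The plan is to reduce the statement to the cited result \cite[Lemma~2.3]{zazi} by conjugating $P$ into an operator of order~$-1$ with good radial dynamics, and then to feed in the resolvent-type estimates \eqref{eq:rad_source}, \eqref{eq:rad_sink} already established in \S\ref{rad}. First I would fix an elliptic self-adjoint invertible $A\in\Psi^{-\frac12}(M)$ (self-adjoint with respect to the same density $dm(x)$ for which $P$ is self-adjoint) and set $\mathbf Q_\omega:=A^{-1}(P-\omega)A^{-1}\in\Psi^{1}(M)$. Then $\mathbf Q_\omega$ is self-adjoint on $L^2(M,dm)$, its principal symbol is $|\xi|\,(p-\omega)$ up to the elliptic positive factor $|\sigma(A)|^{-2}$, so its Hamiltonian flow near fiber infinity is a positive reparametrization of $e^{t|\xi|H_p}$ and hence $L^\pm_\omega$ remain radial sources/sinks for $\mathbf Q_\omega$ in the sense of \cite[Definition~E.50]{res}; the threshold regularity computation is the one already recorded in \S\ref{rad} (the threshold is $s>-\tfrac12$ at the source and $s<-\tfrac12$ at the sink after the $\langle D\rangle^{1/2}$-conjugation, which is exactly what the conjugation by $A$ encodes).

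Next I would translate the hypotheses: writing $v:=Au\in\mathscr D'(M)$, the assumption $(P-\omega)u\in C^\infty$ becomes $\mathbf Q_\omega v=A^{-1}(P-\omega)u\in C^\infty$ (since $A^{-1}\in\Psi^{1/2}$ preserves smoothness), the assumption $\WF(u)\subset\Lambda^+$ becomes $\WF(v)\subset\Lambda^+$ (conjugation by an elliptic $\Psi$DO preserves wavefront sets), and the sign condition $\Im\langle(P-\omega)u,u\rangle\geq 0$ becomes $\Im\langle \mathbf Q_\omega v,v\rangle=\Im\langle A^{-1}(P-\omega)u,Au\rangle=\Im\langle(P-\omega)u,u\rangle\geq 0$, using self-adjointness of $A$. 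Thus $v$ satisfies exactly the hypotheses of \cite[Lemma~2.3]{zazi} applied to the operator $\mathbf Q_\omega$ (for $\omega$ small, uniformly in $|\omega|\le\delta$), which yields $v\in C^\infty(M)$, and hence $u=A^{-1}v\in C^\infty(M)$.

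The one point that needs care — and which I expect to be the main obstacle — is checking that the \emph{input estimates} required by the proof of \cite[Lemma~2.3]{zazi} are available in the present (non-semiclassical, possibly complex-subprincipal-symbol) setting. The argument in \cite{zazi} runs a positive-commutator / contradiction scheme: a would-be nontrivial distributional ``mode'' $v$ with $\WF(v)\subset\Lambda^+$ and the sign condition must have $A_+v\in H^s$ for some $s<-\tfrac12$ by the sink estimate \eqref{eq:rad_sink} (with $\epsilon=0$), hence is bounded below by elliptic regularity off $p^{-1}([-\delta,\delta])$, while the sign condition combined with the source estimate \eqref{eq:rad_source} forces extra regularity that is incompatible unless $v\in C^\infty$. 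Here the key inputs are precisely \eqref{eq:rad_source}–\eqref{eq:DH}, all of which were established in \S\ref{rad} uniformly in $\omega$, together with the fact (used in deriving \eqref{eq:uep2}) that every trajectory in $p^{-1}([-\delta,\delta])\setminus\Lambda^+$ converges to $\Lambda^-$ backward in time. So the proof is genuinely ``an immediate modification'': one replaces $P$ in \cite[Lemma~2.3]{zazi} by $\mathbf Q_\omega$, notes that the abstract hypotheses of that lemma (self-adjointness, radial source/sink structure with the stated threshold, the global escape dynamics) all hold, and invokes it verbatim. I would state only this reduction and the verification of the hypotheses, and refer to \cite{zazi} for the commutator argument itself.
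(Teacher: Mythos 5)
Your proposal is correct and follows exactly the route the paper takes: the paper's entire ``proof'' is the remark preceding the lemma, namely to replace $P$ in \cite[Lemma~2.3]{zazi} by $A^{-1}(P-\omega)A^{-1}$ with $A\in\Psi^{-\frac12}(M)$ elliptic, self-adjoint with respect to $dm(x)$, and invertible. Your verification that the hypotheses (smoothness of $\mathbf Q_\omega v$, wavefront set condition, and the sign condition via $\langle A^{-1}(P-\omega)u,Au\rangle=\langle(P-\omega)u,u\rangle$) transfer to $v=Au$ is exactly the content of that reduction.
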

In particular this shows that if $ ( P - \omega ) u = 0 $ and
$ \WF ( u) \subset \Lambda^+  $ then $ u \in L^2 $, that is 
$ \omega$ lies in the point spectrum $\Spec_{\rm{pp}} ( P ) $. Radial estimates then show that
the number of such $ \omega$'s is finite in a neighbourhood of $ 0 $:
\begin{lemm}
\label{l:spec}
Under the assumptions \eqref{eq:assP} and \eqref{eq:dynaSC}, with $ \delta $ sufficiently small, 
\begin{equation}
\label{eq:spec}
\begin{gathered}
| \Spec_{\rm{pp}} ( P ) \cap [- \delta, \delta ] | < \infty ; \\ 
( P - \omega ) u  = 0 , \ u \in L^2 ( M ) , \ 
|\omega | \leq \delta \quad \Longrightarrow\quad u \in C^\infty ( M ) .
\end{gathered}
\end{equation}
\end{lemm}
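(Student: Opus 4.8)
<br>

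The plan is to prove the two statements in Lemma~\ref{l:spec} together, using Lemma~\ref{l:zazi} to reduce everything to an abstract functional-analytic fact about operators enjoying radial estimates.

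First I would dispose of the smoothness claim: if $(P-\omega)u = 0$ with $u \in L^2(M)$ and $|\omega| \le \delta$, then in particular $u \in \mathscr D'(M)$ and $(P-\omega)u = 0 \in C^\infty$. The elliptic estimate quoted after~\eqref{eq:DH} shows $\WF(u) \subset p^{-1}([-\delta,\delta])$, and the propagation estimate~\eqref{eq:DH} combined with the source estimate~\eqref{eq:rad_source} (exactly as in the derivation of~\eqref{eq:uep2}) shows that any wavefront set point flows backward to $\Lambda^-$; but $u \in L^2 \subset H^{-1/2+}$, which is above the source threshold $s > -\tfrac12$, so~\eqref{eq:rad_source} forces $u$ to be smooth near $\Lambda^-$ and hence, propagating forward, $\WF(u) \subset \Lambda^+$. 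Now $\Im\langle (P-\omega)u,u\rangle = 0 \ge 0$ trivially, so Lemma~\ref{l:zazi} applies and gives $u \in C^\infty(M)$. This proves the second line of~\eqref{eq:spec}.

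Next I would prove finiteness of $\Spec_{\rm pp}(P) \cap [-\delta,\delta]$. The natural approach: suppose not, so there is a sequence $\omega_j \in [-\delta,\delta]$ of distinct eigenvalues with $L^2$-normalized eigenfunctions $u_j$, $(P-\omega_j)u_j = 0$, which by the above are smooth and satisfy $\WF(u_j) \subset \Lambda^+$. Since $P$ is self-adjoint, eigenfunctions for distinct eigenvalues are orthogonal, so $u_j \rightharpoonup 0$ weakly in $L^2$. Apply the radial-sink estimate~\eqref{eq:rad_sink} with $s = -\tfrac12-\beta < -\tfrac12$ to $u_j$: since $(P-\omega_j)u_j = 0$ and $B_+ u_j \to 0$ in $H^{-1/2-\beta}$ (because $\WF(B_+) \cap \Lambda^+ = \emptyset$, so $B_+ u_j$ lies in a fixed relatively compact set of smooth functions where weak $L^2$ convergence upgrades to norm convergence in any $H^s$), and $\|u_j\|_{-N} \to 0$ likewise, we get $\|A_+ u_j\|_{-1/2-\beta} \to 0$. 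Combined with the elliptic and propagation estimates covering the rest of $T^*M$, this yields $\|u_j\|_{-1/2-\beta} \to 0$. But $\|u_j\|_{-1/2-\beta} \ge c\|u_j\|_{-N'}$ is not immediately a contradiction with $\|u_j\|_{L^2}=1$; the contradiction comes from a second application: the estimate~\eqref{eq:uep1}-style bound should be run in the form $\|u_j\|_{-1/2-\beta} \ge c > 0$ via interpolating the fixed $L^2$ norm against a high Sobolev bound on $u_j$ obtained from~\eqref{eq:rad_sink} used in the improving direction together with the regularity $u_j \in C^\infty$. I expect the cleanest route is: from $\|u_j\|_{L^2} = 1$ and $\|u_j\|_{-1/2-\beta}\to 0$ interpolation gives $\|u_j\|_{H^N}\to\infty$ for $N$ large, while running~\eqref{eq:rad_sink} and~\eqref{eq:rad_source} in higher regularity bounds $\|A_\pm u_j\|_s \le C\|B_+ u_j\|_s + C\|u_j\|_{-N}$ — here the subtlety is that at the sink the high-regularity estimate is not available, so instead one should compare two Sobolev norms of $u_j$ both $> -1/2$ near $\Lambda^+$ using that the sink estimate propagates regularity \emph{into} $\Lambda^+$, giving $\|u_j\|_{s'} \le C\|u_j\|_{s}$ for $-1/2 < s < s'$ with $C$ uniform — contradicting that $\|u_j\|_{L^2}=1$ forces comparable size at scale $-1/2-\beta$.

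The main obstacle is precisely this last compactness/contradiction step: one must show that the eigenspaces accumulate nowhere, which amounts to a uniform-in-$\omega$ coercivity estimate of the form $\|u\|_{-1/2-\beta} \ge c\|u\|_{L^2}$ for eigenfunctions, or equivalently a Fredholm-type statement for the family $P-\omega$ acting between the anisotropic spaces adapted to $\Lambda^\pm$. The standard way to package this is to observe that the radial estimates~\eqref{eq:rad_source}, \eqref{eq:rad_sink}, \eqref{eq:DH} make $P - \omega - i0\colon \mathcal X^s \to \mathcal Y^s$ Fredholm of index~$0$ on appropriate spaces (with $\mathcal X^s$ consisting of distributions microlocally $H^\infty$ away from $\Lambda^+$ and $H^s$ near it, $s<-1/2$), holomorphically in $\omega$; then the eigenvalues in $[-\delta,\delta]$ are isolated zeros of a holomorphic Fredholm family and hence finite in number on the compact interval (using analytic Fredholm theory, cf.~\cite[\S C.3]{res}). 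I would state it this way, citing the meromorphic continuation machinery, rather than extracting a bare sequence, since the holomorphic-Fredholm argument is both shorter and is the one that also underlies the absolute continuity claim in~\S\ref{eig}.
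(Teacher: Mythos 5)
Your argument for the second line of~\eqref{eq:spec} is correct, though it takes a slight detour: you establish $\WF(u)\subset\Lambda^+$ and then invoke Lemma~\ref{l:zazi}. The paper gets smoothness (and the finiteness) more directly from a single observation that you miss, and this omission is exactly what leaves a genuine gap in your finiteness argument. The point is that for an $L^2$ eigenfunction the \emph{above-threshold} source estimate~\eqref{eq:rad_source} applies not only near $\Lambda^-$ but also near $\Lambda^+$: the radial sink $L^+_\omega$ for $P-\omega$ is a radial source for $-(P-\omega)$, and the threshold condition $s'>-\tfrac12$ for the a priori regularity is met because $u\in L^2=H^0$. Combining the two source estimates with propagation~\eqref{eq:DH} in between yields the unconditional bound $\|u\|_s\leq C\|u\|_{-N}$ for \emph{all} $s$ and $N$ (this is~\eqref{eq:N2s}), uniformly in $|\omega|\leq\delta$. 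This gives smoothness immediately, and it also gives finiteness in two lines: an infinite orthonormal family $u_j$ converges weakly to $0$ in $L^2$, hence strongly in $H^{-1}$ by compact embedding, and then $1=\|u_j\|_{L^2}\leq C\|u_j\|_{H^{-1}}\to 0$ is a contradiction.

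In your finiteness argument you only ever use the \emph{sink} estimate~\eqref{eq:rad_sink} near $\Lambda^+$, whose threshold $s<-\tfrac12$ caps the regularity you can extract there; this is why you end up with $\|u_j\|_{-1/2-\beta}\to 0$, correctly observe that this does not contradict $\|u_j\|_{L^2}=1$, and then explicitly flag the remaining step as an obstacle. The "subtlety" you mention — "at the sink the high-regularity estimate is not available" — is precisely the false premise: it \emph{is} available once you reverse the sign of the operator, because the eigenfunction's $L^2$ regularity is above threshold. The interpolation scheme you sketch would then close (indeed it becomes unnecessary, since $\|u_j\|_{H^N}\leq C$ plus weak convergence already forces $\|u_j\|_{L^2}\to 0$). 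The alternative you propose — setting up anisotropic Fredholm spaces adapted to $\Lambda^\pm$ and invoking analytic Fredholm theory — could in principle be made to work, but it is not carried out: you would need to define the spaces, verify the Fredholm property (which requires dual estimates for the adjoint), and establish invertibility at some point of the family. This is far heavier machinery than the statement requires.
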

\begin{proof}
If $ u \in L^2 ( M ) $ then the threshold assumption in 
\eqref{eq:rad_source} is satisfied for $ P - \omega $ near $ \Lambda^-  $ and for $ - ( P - \omega ) $ near $ \Lambda^+  $.
Using the remark about regularity after \eqref{eq:rad_source},
as well as~\eqref{eq:DH} away from sinks and sources,
we conclude that 
\begin{equation}
\label{eq:N2s} \| u \|_{ s} \leq C \| u \|_{ -N }
\end{equation}
for any $ s $ and $ N $. That implies that $ u \in C^\infty ( M ) $.
Now, suppose that there exists an infinite set of $ L^2 $ eigenfunctions with 
eigenvalues in $ [ - \delta, \delta ] $:
\[
( P - \omega_j ) u_j = 0 , \ \ \ \langle u_k, u_j \rangle_{ L^2 ( M) }
= \delta_{kj} , \ \ \ | \omega_j | \leq \delta.
\]
Since $ u_j \rightharpoonup 0 $, weakly in $ L^2 $, $ u_j \to 0 $ strongly in 
$ H^{-1} $. But this contradicts \eqref{eq:N2s} applied with $ s = 0 $ and 
$ N = 1 $.
\end{proof}
{From now on we make the assumption that $P$ has no eigenvalues in $[-\delta,\delta]$:
\begin{equation}
  \label{e:no-spectrum}
\Spec_{\rm{pp}} ( P ) \cap [- \delta, \delta ]=\emptyset.
\end{equation}
By Lemma~\ref{l:spec} we see that~\eqref{e:no-spectrum} holds for $\delta$ small enough
as long as $0\notin\Spec_{\rm{pp}}(P)$.}

\subsection{Limiting absorption principle}
\label{lap}

Using results of \S\S\ref{rad},\ref{eig} we obtain a version of the limiting absorption principle sufficient for proving~\eqref{eq:SC2}. Radial estimates 
can also easily give existence of $ ( P - \omega - i 0)^{-1} :
H^{\frac12+} ( M ) \to H^{-\frac12 - } ( M ) $ but we restrict ourselves to the simpler version and follow Melrose \cite[\S 14]{mel}. 
The only modification lies in replacing scattering asymptotics by the regularity result given in Lemma \ref{l:zazi}.
\begin{lemm}
\label{l:lap}
Suppose that $ P $ satisfies \eqref{eq:assP}, \eqref{eq:dynaSC}, and~\eqref{e:no-spectrum}.
Then for $ |\omega | \leq \delta $ and $ f \in C^\infty ( M ) $, the limit
\[
( P - \omega - i \epsilon )^{-1}f 
\xrightarrow{ H^{-\frac12 -  } ( M)  } ( P - \omega - i 0 )^{-1} f,\quad
\epsilon\to 0+
\]
exists. This limit is the unique solution to the equation
\begin{equation}
  \label{e:lapidus}
(P-\omega)u=f,\quad \WF(u)\subset\Lambda^+,
\end{equation}
and the map $\omega\mapsto (P-\omega-i0)^{-1}f\in H^{-\frac 12-}(M)$
is continuous in $\omega\in [-\delta,\delta]$.
\end{lemm}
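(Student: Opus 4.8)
The strategy is to assemble the limiting absorption principle from three ingredients already established: the uniform radial estimates \eqref{eq:uep1}--\eqref{eq:uep2}, the rigidity statement Lemma~\ref{l:zazi}, and a compactness argument à la Melrose. First I would fix $f\in C^\infty(M)$ and $|\omega|\le\delta$ and consider the family $u_\epsilon=u_\epsilon(\omega)=(P-\omega-i\epsilon)^{-1}f$ for $\epsilon>0$, which is smooth by elliptic regularity. The bound \eqref{eq:uep1} gives $\|u_\epsilon\|_{-\frac12-\beta}\le C\|f\|_{\frac12+\beta}+C\|u_\epsilon\|_{-N}$ with $C$ independent of $\epsilon,\omega$. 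The key step is to upgrade this to an a priori bound
\begin{equation}
  \label{e:lap-apriori}
\|u_\epsilon\|_{-\frac12-\beta}\le C\|f\|_{\frac12+\beta},\qquad 0<\epsilon\le 1,\ |\omega|\le\delta,
\end{equation}
by absorbing the $\|u_\epsilon\|_{-N}$ term. This is the standard argument by contradiction: if \eqref{e:lap-apriori} failed there would be sequences $\epsilon_j\to 0$ or $\epsilon_j\to\epsilon_\infty>0$, $\omega_j\to\omega_\infty$, and $f_j\to f_\infty$ (normalizing $\|u_{\epsilon_j}(\omega_j)\|_{-\frac12-\beta}=1$, $\|f_j\|_{\frac12+\beta}\to 0$), and after passing to a weakly convergent subsequence $u_{\epsilon_j}\rightharpoonup u$ in $H^{-\frac12-\beta}$, hence strongly in $H^{-N}$, so that $\|u\|_{-N}$ bounds the limit from below and $u\ne 0$. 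Then $u$ solves $(P-\omega_\infty)u=0$ (if $\epsilon_\infty=0$) or $(P-\omega_\infty-i\epsilon_\infty)u=0$ (if $\epsilon_\infty>0$); in the latter case $P$ self-adjoint forces $\Im\langle(P-\omega_\infty-i\epsilon_\infty)u,u\rangle=-\epsilon_\infty\|u\|^2$, contradicting $u\ne0$ once one checks $u\in L^2$. In the former case \eqref{eq:uesj} gives $\WF(u)\subset\Lambda^+$ and $\Im\langle(P-\omega_\infty)u,u\rangle=0$, so Lemma~\ref{l:zazi} yields $u\in C^\infty$, hence $u\in L^2$, hence $\omega_\infty\in\Spec_{\rm pp}(P)\cap[-\delta,\delta]=\emptyset$ by \eqref{e:no-spectrum} --- a contradiction. (The $L^2$ membership needed to feed into Lemma~\ref{l:zazi} or the self-adjointness identity is itself obtained by running the improved radial estimates with $B_+u$ controlled, exactly as in the proof of Lemma~\ref{l:spec}.)

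Granting \eqref{e:lap-apriori}, convergence follows by a second compactness argument. Given any sequence $\epsilon_j\to0+$, the $u_{\epsilon_j}$ are bounded in $H^{-\frac12-\beta}$ by \eqref{e:lap-apriori}, so a subsequence converges weakly in $H^{-\frac12-\beta'}$ for $\beta'>\beta$, hence strongly in $H^{-N}$, to some $u$ which by \eqref{eq:uesj} satisfies $(P-\omega)u=f$ and $\WF(u)\subset\Lambda^+$. Uniqueness of such $u$ is exactly the content of Lemma~\ref{l:zazi}: if $u_1,u_2$ both solve \eqref{e:lapidus} then $w:=u_1-u_2$ has $(P-\omega)w\in C^\infty$ (in fact $=0$), $\WF(w)\subset\Lambda^+$, and $\Im\langle(P-\omega)w,w\rangle=0\ge0$, so $w\in C^\infty$, whence $w\in L^2$ and $w=0$ by \eqref{e:no-spectrum}. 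Since every subsequential limit equals the unique solution, the full family $u_\epsilon$ converges; moreover the limit inherits the bound $\|(P-\omega-i0)^{-1}f\|_{-\frac12-\beta}\le C\|f\|_{\frac12+\beta}$, and convergence actually takes place in $H^{-\frac12-}$ after a Sobolev interpolation against the uniform $H^{-\frac12-\beta}$ bound.

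Finally, continuity of $\omega\mapsto(P-\omega-i0)^{-1}f$ on $[-\delta,\delta]$ follows by the same scheme applied to two parameters at once: if $\omega_j\to\omega_\infty$ and $v_j:=(P-\omega_j-i0)^{-1}f$, then $\{v_j\}$ is bounded in $H^{-\frac12-\beta}$ by \eqref{e:lap-apriori} (the constant being uniform in $\omega$), any weak limit $v$ solves $(P-\omega_\infty)v=f$ with $\WF(v)\subset\Lambda^+$ --- here one uses that $\WF(v_j)\subset\Lambda^+$ uniformly, via the microlocalized estimate \eqref{eq:uep2} passing to the limit --- so $v=(P-\omega_\infty-i0)^{-1}f$ by uniqueness, and the full sequence converges in $H^{-\frac12-}$.

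\textbf{Main obstacle.} The crux is the contradiction step establishing \eqref{e:lap-apriori}, specifically showing that a nonzero weak limit $u$ with $(P-\omega_\infty)u=0$ and $\WF(u)\subset\Lambda^+$ actually lies in $L^2(M)$ so that Lemma~\ref{l:zazi} applies and \eqref{e:no-spectrum} can be invoked. This requires care because $u$ is a priori only in $H^{-\frac12-\beta}$, and $-\frac12$ is precisely the threshold regularity in the radial sink estimate \eqref{eq:rad_sink}; one must run that estimate together with the observation that $\WF(u)\subset\Lambda^+$ makes the term $\|B_+u\|_s$ harmless (as $\WF(B_+)\cap\Lambda^+=\emptyset$), exactly the bootstrap used in Lemma~\ref{l:spec}, to promote $u$ to $C^\infty$. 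The bookkeeping of which Sobolev exponents are admissible at each stage, and the interplay with the $-N$ norms in the compactness argument, is the part demanding the most attention; the rest is a routine assembly of cited estimates.
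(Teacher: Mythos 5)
Your proposal is correct and follows essentially the same route as the paper: a compactness/contradiction argument for the uniform bound (absorbing the $\|u_\epsilon\|_{-N}$ term by normalizing a putative unbounded sequence and showing the limit solves the homogeneous problem, hence vanishes by Lemma~\ref{l:zazi} and the spectral assumption), followed by precompactness plus uniqueness to get convergence, and the same two-parameter version for continuity in $\omega$. The only small remark is that your ``main obstacle'' is not really one: Lemma~\ref{l:zazi} applies directly to distributional solutions with $\WF(u)\subset\Lambda^+$ and does not require $u\in L^2$ as input, so no separate bootstrap to $L^2$ is needed before invoking it.
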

\Remark Replacing $P$ with $-P$ we see that there is also a limit
$$
( P - \omega + i \epsilon )^{-1}f 
\xrightarrow{ H^{-\frac12 -  } ( M)  } ( P - \omega + i 0 )^{-1} f,\quad
\epsilon\to 0+
$$
which satisfies~\eqref{e:lapidus} with $\Lambda^+$ replaced by $\Lambda^-$.
\begin{proof} 
We first note that Lemma~\ref{l:zazi} and the spectral assumption~\eqref{e:no-spectrum}
imply that~\eqref{e:lapidus} has no more than one solution. 
By~\eqref{eq:uesj}, if a (distributional) limit 
$ ( P  - \omega-i \epsilon_j )^{-1} f $, $ \epsilon_j \to 0 $, exists then it solves~\eqref{e:lapidus}.

To show that the limit exists put $ u_\epsilon := ( P - \omega- i \epsilon )^{-1} f $ and suppose first that $ \| u_\epsilon \|_{ -\frac12 - \alpha }$
is not bounded as $ \epsilon \to 0 + $ for some $ \alpha > 0 $. Hence there exists $ \epsilon_j \to 0+ $ such that $ \| u_{\epsilon_j} \|_{ -\frac12 - \alpha } \to \infty $.
Putting
$ v_j := u_{\epsilon_j} / \| u_{\epsilon_j}  \|_{ -\frac12 - \alpha } $ we obtain
\begin{equation}
\label{eq:Pie} 
( P - \omega-i \epsilon_j ) v_j = f_j , \ \ \| v_j \|_{ {-\frac12 - \alpha }} = 1, 
\ \ f_j \xrightarrow{ C^\infty ( M)  } 0 . 
\end{equation}
Applying \eqref{eq:uep1} with  $ N = \frac12 + \alpha $
we see that $ v_j $ is bounded in $ H^{-\frac12 - \beta } ( M ) $ for any $ \beta > 0 $. 
Since $ H^{-\frac12 - \beta } ( M ) \hookrightarrow H^{-\frac12 - \alpha } ( M ) $, $ \beta < \alpha $ is compact we can assume, by passing to a subsequence,  that $ v_j \to v $ in 
$ H^{-\frac12 - \alpha } ( M ) $. Then 
$ (P-\omega) v = 0 $ and the same reasoning that led to \eqref{eq:uesj} shows that
$ \WF ( v ) \subset \Lambda^+ $. Thus $v$ solves~\eqref{e:lapidus} with $f\equiv 0$,
implying that $ v \equiv 0 $. This gives a contradiction with the normalization $\|v_j\|_{ -\frac12 - \alpha }=1$.

We conclude that $ u_\epsilon $ is bounded in $ H^{-\frac12 - \alpha }
( M ) $ for all $ \alpha > 0 $. But then similarly to the previous paragraph
$(u_\epsilon)_{\epsilon\to 0}$ is precompact in $H^{-\frac 12-\alpha}(M)$ for all $\alpha>0$.
Since every limit point has to be the (unique) solution to~\eqref{e:lapidus},
we see that $u_\epsilon$ converges as $\epsilon\to 0+$ in $H^{-\frac 12-\alpha}(M)$
to that solution.

As for continuity in $\omega$, we note that the above proof
gives the stronger statement
\begin{equation}
  \label{e:continuor}
(P-\omega_j-i\epsilon_j)^{-1}f\xrightarrow{ H^{-\frac12 -  } ( M)  } (P-\omega-i0)^{-1}f
\end{equation}
for all $\epsilon_j\to 0+$,
$\omega_j\to \omega$,
and
$|\omega_j|\leq \delta$.
\end{proof} 
In~\S\ref{s:lagreg} we will need the following
upgraded version of Lemma~\ref{l:lap}:
\begin{lemm}
\label{l:lapup}
Suppose that $ P $ satisfies \eqref{eq:assP}, \eqref{eq:dynaSC}, and~\eqref{e:no-spectrum}.
Let $ s < -\frac12 $ and
$  g \in H^{ s + 1 } ( M ) $, $ \WF ( g) \subset \Lambda^+ $, 
where $ \Lambda^+ $ is defined by \eqref{e:Lambda-pm-def}.
Then for $|\omega|\leq\delta$ the limit
\begin{equation}
\label{eq:lapup}  ( P - \omega - i \epsilon )^{-1} g 
\xrightarrow{ H^{s-} ( M)  } ( P - \omega - i 0 )^{-1} g,\quad
\epsilon\to 0+
\end{equation}
exists, and $ \WF ( ( P - \omega - i 0 )^{-1} g ) \subset 
\Lambda^+ $. In particular, for $k\geq 1$ and $ f \in C^\infty ( M ) $ 
the limit
\begin{equation}
  \label{e:lapup2}
( P - \omega - i \epsilon )^{-k}f 
\xrightarrow{ H^{-k + \frac12 - } ( M)  } ( P - \omega - i 0 )^{-k} f,\quad
\epsilon\to 0+ , 
\end{equation}
exists. Finally, $ ( P - \omega - i 0 )^{-1} f \in C^{k}_\omega ( [- \delta, \delta];H^{ -k- \frac12 - } ( M ) ) $
with $\partial_\omega^k( P - \omega - i 0 )^{-1} f=k!( P - \omega - i 0 )^{-k-1} f$.
\end{lemm}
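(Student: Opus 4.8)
The plan is to prove the three assertions in order. The first, the limiting absorption principle~\eqref{eq:lapup} for data $g\in H^{s+1}$ with $\WF(g)\subset\Lambda^+$, follows by rerunning the scheme of~\S\ref{rad} and of the proof of Lemma~\ref{l:lap}; the only new feature is that $g$ is no longer smooth. Put $u_\epsilon:=(P-\omega-i\epsilon)^{-1}g$. Since $\Lambda^+$ and $\Lambda^-$ are disjoint closed conic sets and $\WF(g)\subset\Lambda^+$, we may take the operator $\widetilde B_-$ in the source estimate~\eqref{eq:rad_source} with $\WF(\widetilde B_-)\cap\Lambda^+=\emptyset$, so that $\widetilde B_-g\in C^\infty(M)$; applying~\eqref{eq:rad_source} at each level $s'>-\tfrac12$ bounds $A_-u_\epsilon$ in every $H^{s'}$ by $C_g+C\|u_\epsilon\|_{-N}$, with $C_g$ depending on $g$. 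Propagating with~\eqref{eq:DH} along the backward flow lines, which by~\eqref{eq:dynaSC} all leave a neighbourhood of $\Lambda^+$ and converge to $\Lambda^-$, gives the same for $Au_\epsilon$ whenever $\WF(A)\cap\Lambda^+=\emptyset$; then the sink estimate~\eqref{eq:rad_sink} at level $s<-\tfrac12$, with $\widetilde B_+g\in H^{s+1}$ and $B_+u_\epsilon$ controlled by the previous step, yields
$$
\|u_\epsilon\|_s\le C\|g\|_{s+1}+C_g+C\|u_\epsilon\|_{-N}.
$$
To drop the last term one repeats verbatim the normalization and compactness argument of Lemma~\ref{l:lap}: if $\|u_{\epsilon_j}\|_s$ were unbounded, the normalized sequence would be bounded in $H^s$, hence precompact in $H^{s-}$, with any limit point $v$ solving $(P-\omega)v=0$, $\WF(v)\subset\Lambda^+$ (by the uniform microlocal bounds just obtained), hence $v=0$ by Lemma~\ref{l:zazi} and~\eqref{e:no-spectrum} --- contradicting normalization. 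Thus $u_\epsilon$ is bounded in $H^s$, every limit point solves $(P-\omega)u=g$, $\WF(u)\subset\Lambda^+$, such a solution is unique (again Lemma~\ref{l:zazi},~\eqref{e:no-spectrum}), so $u_\epsilon\to(P-\omega-i0)^{-1}g$ in $H^{s-}$; the same estimates give $\WF((P-\omega-i0)^{-1}g)\subset\Lambda^+$ and show that $(P-\omega-i\epsilon)^{-1}g$ is microlocally smooth away from $\Lambda^+$, uniformly in $\epsilon>0$ and $|\omega|\le\delta$.

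Next I would prove~\eqref{e:lapup2} by induction on $k$, carrying along the additional information that $(P-\omega-i\epsilon)^{-k}f$ is bounded in $H^{-k+\frac12-}(M)$ and microlocally smooth away from $\Lambda^+$, uniformly in $\epsilon>0$ and $|\omega|\le\delta$; the base case $k=1$ is Lemma~\ref{l:lap} together with~\eqref{eq:uesj} and~\eqref{eq:uep2}. For the step, set $h_\epsilon:=(P-\omega-i\epsilon)^{-k}f$ and $w_\epsilon:=(P-\omega-i\epsilon)^{-1}h_\epsilon=(P-\omega-i\epsilon)^{-(k+1)}f$ and run the argument of the previous paragraph: near $\Lambda^-$ the forcing $h_\epsilon$ is uniformly smooth by the inductive hypothesis, so the source estimate and propagation bound $Aw_\epsilon$ in every Sobolev space for $\WF(A)\cap\Lambda^+=\emptyset$; the sink estimate at level $s=-k-\tfrac12-\eta$, with $\|\widetilde B_+h_\epsilon\|_{s+1}\le C\|h_\epsilon\|_{-k+\frac12-\eta}$ uniformly bounded, and the normalization argument, give $w_\epsilon$ bounded in $H^{-k-\frac12-}$; uniqueness of the solution of $(P-\omega)w=g_k$, $\WF(w)\subset\Lambda^+$, where $g_k:=(P-\omega-i0)^{-k}f$, then identifies the limit (which by the first part equals $(P-\omega-i0)^{-1}g_k=:(P-\omega-i0)^{-(k+1)}f$), and the uniform bounds reproduce the strengthened hypothesis for $k+1$. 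The resulting convergence rate is $H^{-(k+1)+\frac12-}=H^{-k-\frac12-}$.

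For the last assertion, note that for each $\epsilon>0$ the map $\omega\mapsto(P-\omega-i\epsilon)^{-1}f$ is smooth with $\partial_\omega^j(P-\omega-i\epsilon)^{-1}f=j!\,(P-\omega-i\epsilon)^{-(j+1)}f$. Running the argument of the preceding paragraphs with $\omega_j\to\omega$ in place of a fixed $\omega$ (as in~\eqref{e:continuor}) shows that each $(\omega,\epsilon)\mapsto(P-\omega-i\epsilon)^{-(j+1)}f$, $\epsilon>0$, extends continuously to $\epsilon=0$ with values in $H^{-j-\frac12-}(M)$; since $[-\delta,\delta]\times[0,1]$ is compact this continuity is uniform, so for $0\le j\le k$ we have $(P-\omega-i\epsilon)^{-(j+1)}f\to(P-\omega-i0)^{-(j+1)}f$ in $C^0_\omega([-\delta,\delta];H^{-k-\frac12-}(M))$ as $\epsilon\to0+$. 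As both $(P-\omega-i\epsilon)^{-1}f$ and its first $k$ derivatives in $\omega$ converge uniformly on $[-\delta,\delta]$ with values in $H^{-k-\frac12-}(M)$, the standard theorem on differentiating uniform limits gives $(P-\omega-i0)^{-1}f\in C^k_\omega([-\delta,\delta];H^{-k-\frac12-}(M))$ with $\partial_\omega^k(P-\omega-i0)^{-1}f=k!\,(P-\omega-i0)^{-k-1}f$.

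The main obstacle is the bookkeeping in the induction of the second step: one must carry, from one power of the resolvent to the next, the uniform microlocal smoothness of $(P-\omega-i\epsilon)^{-k}f$ away from $\Lambda^+$, since this is exactly what makes the source estimate~\eqref{eq:rad_source} applicable to $(P-\omega-i\epsilon)^{-(k+1)}f$ with the non-smooth forcing $(P-\omega-i\epsilon)^{-k}f$. A lesser point is the compactness argument needed in the third step to upgrade convergence at fixed $\omega$ to convergence uniform in $\omega\in[-\delta,\delta]$.
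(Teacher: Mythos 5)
Your proposal is correct and, for the first two assertions, follows essentially the same route as the paper: the source and propagation estimates are applied with cutoffs whose wavefront sets avoid $\Lambda^+$ (so that the non-smooth forcing $g$, resp.\ $h_\epsilon=(P-\omega-i\epsilon)^{-k}f$, is harmless there), the sink estimate is applied at the level $s<-\tfrac12$ where $\|\widetilde B_+g\|_{s+1}$ is finite, and the compactness/uniqueness argument of Lemma~\ref{l:lap} is rerun with $-\tfrac12$ replaced by $s$; the higher powers are handled by iteration, with the limit identified as the unique solution of $(P-\omega)^k u=f$, $\WF(u)\subset\Lambda^+$ — exactly the paper's scheme, which you spell out in more detail (the uniform microlocal smoothness of $h_\epsilon$ away from $\Lambda^+$ that you carry through the induction is indeed the point that makes the source estimate applicable at the next step). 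The only genuine divergence is in the differentiability statement: you prove that $\partial_\omega^j(P-\omega-i\epsilon)^{-1}f=j!\,(P-\omega-i\epsilon)^{-(j+1)}f$ converges uniformly in $\omega$ as $\epsilon\to0+$ (reducing this to joint continuity on the compact set $[-\delta,\delta]\times[0,1]$, in the spirit of~\eqref{e:continuor}) and then invoke the classical theorem on differentiating uniform limits, whereas the paper works directly with the difference quotient, using the resolvent identity $\omega^{-1}(u_\epsilon(\omega)-u_\epsilon(0))=(P-\omega-i\epsilon)^{-1}(P-i\epsilon)^{-1}f$ and passing to the limits $\epsilon\to 0+$ and then $\omega\to 0$, identifying each limit point by precompactness and uniqueness. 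Both arguments are valid and rest on the same estimates; yours requires the uniform-in-$\omega$ convergence of all derivatives up to order $k$, while the paper's trades that for a two-step limit in the difference quotient.
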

\begin{proof}
We follow closely the proof of Lemma~\ref{l:lap} and put $ u_\epsilon := ( P - \omega- i \epsilon )^{-1} g $. 
Since $P-\omega-i\epsilon$ is elliptic
for every $\epsilon>0$, we have $u_\epsilon\in H^{s+1}(M)$ and $\WF(u_\epsilon)\subset \WF(g)\subset\Lambda^+$,
so it remains to establish uniformity as $\epsilon\to 0+$.
We use the following version of~\eqref{eq:uep2} (which follows
from the same proof): for every $A\in\Psi^0(M)$ with $\WF(A)\cap \Lambda^+=\emptyset$
there exists $\widetilde B\in\Psi^0(M)$ with $\WF(\widetilde B)\cap\Lambda^+=\emptyset$ such that
\begin{equation}
  \label{e:uep2-adv}
\|Au_\epsilon\|_{s'}\leq C\|\widetilde B g\|_{s'+1}+C\|u_\epsilon\|_{-N},\quad
s'>-\textstyle{1\over 2}
\end{equation}
where the constant $C$ does not depend on $\omega,\epsilon$. We also have the following
version of~\eqref{eq:uep1}: there exists $B'\in\Psi^0(M)$ with $\WF(B')\cap\Lambda^+=\emptyset$ such that
\begin{equation}
  \label{e:uep1-adv}
\|u_\epsilon\|_s\leq
C\|g\|_{s+1}+C\|B' g\|_{1}+C\|u_\epsilon\|_{-N},\quad
s<-\textstyle{1\over 2}.
\end{equation}
Here the norms $\|\widetilde Bg\|_{s'+1}$ and $\|B'g\|_1$
are finite since $\WF(g)\subset\Lambda^+$. From~\eqref{e:uep2-adv} and~\eqref{e:uep1-adv} we get
regularity for limit points of $u_{\epsilon_j}$ similarly to~\eqref{eq:uesj}:
\[
\exists \, \epsilon_j \to 0+ , \ u \in \mathscr D' ( M ) , \
 u_{\epsilon_j } \xrightarrow{ \mathscr D' ( M)  }  u  \quad
\Longrightarrow \quad u \in H^{ s } ( M ) , \ \ 
\WF ( u ) \subset \Lambda^+ .
\]
The existence of the limit~\eqref{eq:lapup} follows as in the proof of Lemma~\ref{l:lap},
replacing $-{1\over 2}$ by $s$ in Sobolev space orders; here
$u=(P-\omega-i0)^{-1}g$ is the unique solution to
$$
(P-\omega)u=g,\quad
\WF(u)\subset\Lambda^+.
$$
Iterating this argument, we get existence of the limit~\eqref{e:lapup2}
and continuous dependence of $(P-\omega-i0)^{-k}f\in H^{-k+{1\over 2}-}$
on $\omega\in [-\delta,\delta]$ similarly to~\eqref{e:continuor},
with $u=(P-\omega-i0)^{-k}f$ being the unique solution to
$$
(P-\omega)^k u=f,\quad
\WF(u)\subset\Lambda^+.
$$
It remains to show differentiability in~$\omega$.
For simplicity we assume that $\omega=0$ and show that
for $f\in C^\infty(M)$,
\begin{equation}
  \label{e:diffor}
\partial_\omega \big[(P-\omega-i0)^{-1}f\big]\big|_{\omega=0}
=(P-\omega-i0)^{-2}f\quad\text{in}\quad H^{-\frac 32-}.
\end{equation}
The case of higher derivatives is handled by iteration.
To show~\eqref{e:diffor} we denote $u_\epsilon(\omega):=(P-\omega-i\epsilon)^{-1}f$ and write for $\omega\neq 0$, with limits in $ H^{-\frac32 - } $
\begin{equation}
  \label{e:diffor2}
\begin{aligned}
{u_0(\omega)-u_0(0)\over\omega}
&=\lim_{\epsilon\to 0+}{u_\epsilon(\omega)-u_\epsilon(0)\over\omega}
=\lim_{\epsilon\to 0+}(P-\omega-i\epsilon)^{-1}(P-i\epsilon)^{-1}f
\\&=(P-\omega-i0)^{-1}(P-i0)^{-1}f.
\end{aligned}
\end{equation}
To show the last equality above we first note
that the family $(P-\omega-i\epsilon)^{-1}(P-i\epsilon)^{-1}f$ is precompact
in $H^{-{3\over 2}-\alpha}(M)$ for any $\alpha>0$ as follows from iterating~\eqref{e:uep1-adv}.
By~\eqref{e:uep2-adv} every limit point $u$ of this family as $\epsilon\to 0+$ satisfies
$P(P-\omega)u=f$, $\WF(u)\subset\Lambda$ and thus equals
$(P-\omega-i0)^{-1}(P-i0)^{-1}f$. Finally, letting $\omega\to 0$ in~\eqref{e:diffor2}
we get~\eqref{e:diffor}.
\end{proof}

\section{Lagrangian structure of the resolvent}
\label{lare}

In this section we describe the Lagrangian structure of the resolvent
refining the results of Haber--Vasy~\cite{hb} in our special case.
To start, we briefly review basic theory of Lagrangian distributions following~\cite[\S25.1]{H4}.

\subsection{Lagrangian distributions}
  \label{s:lagrangian-basic}

Let $M$ be a compact surface and $\Lambda_0\subset T^*M\setminus 0$
a conic Lagrangian submanifold without boundary. 
Denote by $I^s(M;\Lambda_0)\subset\mathcal D'(M)$ the space of Lagrangian distributions
of order~$s$ on $M$
associated to $\Lambda_0$. They have the following properties:
\begin{enumerate}
\item $I^s(M;\Lambda_0)\subset H^{-{1\over 2}-s-}(M)$;
\item for all $u\in I^s(M;\Lambda_0)$ we have $\WF(u)\subset\Lambda_0$;
\item if $\Lambda_1\subset \Lambda_0$ is an open conic subset
and $u\in I^s(M;\Lambda_0)$, then $u\in I^s(M;\Lambda_1)$
if and only if $\WF(u)\subset \Lambda_1$;
\item for all $A\in \Psi^k(M)$ and $u\in I^s(M;\Lambda_0)$
we have $Au\in I^{s+k}(M;\Lambda_0)$;
\item if additionally $\sigma(A)|_{\Lambda_0}=0$, then $Au\in I^{s+k-1}(M;\Lambda_0)$.
\end{enumerate}
Denote
$$
I^{s+}(M;\Lambda_0):=\bigcap_{s'>s} I^{s'}(M;\Lambda_0).
$$
A simple example on a torus (in the notation of \S \ref{exa}) is given by 
\begin{equation}
\label{eq:exala}
u ( x ) := ( x_1 - \tfrac{\pi}2 - i 0 )^{-1} \varphi(x), \ \ 
\varphi \in C^\infty_{\rm{c}} ( B ( 0 , 1 ) ) , \ \ 
u \in I^{0} ( \mathbb T^2 ; \Lambda_0^+ ) \subset H^{-\frac12 - } ( 
\mathbb T^2 ) , 
\end{equation}
where $ \Lambda_0^+ $ is given in \eqref{eq:P1}.

To define Lagrangian distributions we use Melrose's iterative 
characterization~\cite[Definition~25.1.1]{H4}:
$u\in\mathcal D'(M)$ lies in
$I^{s+}(M;\Lambda_0)$ if and only if $\WF(u)\subset\Lambda_0$ and
\begin{equation}
  \label{e:lagr-char}
A_1\dots A_\ell \, u\in H^{-{1\over 2}-s-}(M)\quad\text{for any}\quad
A_1,\dots,A_\ell\in\Psi^1(M),\
\sigma(A_j)|_{\Lambda_0}=0.
\end{equation}
Note that~\cite{H4} uses Besov spaces ${}^\infty H^s$, however this does not
make a difference in~\eqref{e:lagr-char} since $H^s\subset {}^\infty H^s\subset H^{s'}$
for all $s'<s$, see~\cite[Proposition~B.1.2]{H3}.

We also need oscillatory integral representations for Lagrangian distributions.
Assume that in some local coordinate system on $M$, $\Lambda_0$ is given by
\begin{equation}
  \label{e:lm-par}
\Lambda_0=\{(x,\xi)\colon x=\partial_\xi F(\xi),\ \xi\in\Gamma_0\}
\end{equation}
where $\Gamma_0\subset\mathbb R^2\setminus 0$ is an open cone and $F:\Gamma_0\to\mathbb R$ is
homogeneous of order~1. (Every Lagrangian can be locally written in this form after a change of base, $ x $, variables~-- see~\cite[Theorem 21.2.16]{H3}. Using a pseudodifferential partition of unity
we can write every Lagrangian distribution as a sum of expressions of the form~\eqref{e:lagros}.)
Then $u\in I^s(M;\Lambda_0)$ if and only if $u$ can be written (modulo a $C^\infty$ function)
as
\begin{equation}
  \label{e:lagros}
u(x)=\int_{\Gamma_0}e^{i(\langle x,\xi\rangle-F(\xi))}a(\xi)\,d\xi
\end{equation}
where $a(\xi)\in C^\infty(\mathbb R^2)$ is a symbol of order $s-{1\over 2}$, namely
\begin{equation}
\label{eq:symb}
|\partial^\alpha_\xi a(\xi)|\leq C_\alpha \langle\xi\rangle^{s-{1\over 2}-|\alpha|},\quad
\xi\in\mathbb R^2
\end{equation}
and $a$ is supported in a closed cone contained in $\Gamma_0$. See~\cite[Proposition~25.1.3]{H4}.
An equivalent way of stating~\eqref{e:lagros} is in terms of the Fourier transform $\hat u$:
$e^{iF(\xi)}\hat u(\xi)$ is a symbol, that is, satisfies estimates \eqref{eq:symb}.

We finally review properties of the principal symbol of a Lagrangian distribution,
used in the proof of Lemma~\ref{l:lagreg-plus} below, referring the reader to~\cite[Chapter~25]{H4}
for details.
The principal symbol of a Lagrangian distribution, $ u $, with values in half-densities,  $u\in I^s(M, \Lambda;  \Omega^{\frac12}_M )$, is the equivalence class 
$$ \sigma(u)\in S^{s+{1\over 2}}(\Lambda;\mathcal M_\Lambda \otimes \Omega_\Lambda^{1\over 2})/
S^{s-{1\over 2}}(\Lambda;\mathcal M_\Lambda \otimes \Omega_\Lambda^{1\over 2}),
$$
see \cite[Theorem~25.1.9]{H4}, where
\begin{itemize}
\item $\Omega_\Lambda^{1\over 2}$ is the line bundle of half-densities on $\Lambda$;
\item $\mathcal M_\Lambda$ is the Maslov line bundle; it has a finite number of prescribed local frames with ratios of any two prescribed frames given by a constant of absolute value one. Consequently it has a canonical inner product and does not enter into the calculations below;
\item $S^k(\Lambda;\mathcal M_\Lambda\otimes\Omega_\Lambda^{1\over 2})$ is the space
of sections in $C^\infty(\Lambda;\mathcal M_\Lambda\otimes\Omega_\Lambda^{1\over 2})$
which are symbols of order~$k$, defined using the dilation operator
$(x,\xi)\mapsto (x,\lambda\xi)$, $\lambda>0$, see the discussion on~\cite[page~13]{H4}.
In the parametrization~\eqref{e:lagros} we have
$\sigma(u |dx|^{\frac12} )=(2\pi)^{-\frac12}a(\xi)|d\xi|^{\frac12}$. The factor $|d\xi|^{\frac12}$
accounts for the difference in the order of the symbol.
\end{itemize}
If $P\in\Psi^\ell(M; \Omega_M^{\frac12} )$ satisfies $\sigma(P)|_{\Lambda}=0$ and $u\in I^s(M,\Lambda;\Omega_M^{\frac12} )$ then
\begin{equation}
  \label{e:transport-eqn}
  Pu\in I^{s+\ell-1}(M, \Lambda; \Omega_M^{\frac12} ),\quad
\sigma(Pu)= \tfrac 1 i L \sigma(u)
\end{equation}
where $L$ is a first order differential operator on $C^\infty(\Lambda;\mathcal M_\Lambda\otimes\Omega_\Lambda^{1\over 2})$ with principal part $H_p$. The equation \eqref{e:transport-eqn} is the {\em transport equation} for $P $ 
(the {\em eikonal equation} corresponds to $  \sigma ( P ) |_\Lambda = 0  $)~-- see~\cite[Theorem~25.2.4]{H4}. 
If $P$ is self-adjoint, then its subprincipal symbol is real-valued by~\cite[Theorem~18.1.34]{H3}
and thus by~\cite[(25.2.12)]{H4}
\begin{equation}
\label{eq:LLst} 
 L^* = -L  \quad \text{on } 
L^2 ( \Lambda; \mathcal M_\Lambda \otimes \Omega_\Lambda^{\frac12} ) . 
\end{equation}

\subsection{Lagrangian regularity}
  \label{s:lagreg}
  
We now establish Lagrangian regularity for elements in the range
of the operators $(P-\omega\mp i0)^{-1}$ constructed in~\S\ref{lap}:
\begin{lemm}
  \label{l:lagreg}
Suppose that $ P $ satisfies \eqref{eq:assP}, \eqref{eq:dynaSC}, and~\eqref{e:no-spectrum}.
Let $f\in C^\infty(M)$ and
$$
u^\pm(\omega):=(P-\omega\mp i0)^{-1}f\in H^{-{1\over 2}-}(M),\quad
|\omega|\leq\delta.
$$
Then $u^\pm(\omega)\in I^{0}(M ; \Lambda^\pm_\omega)$.
Moreover, the symbols of $u^\pm(\omega)$ depend smoothly on $\omega$:
\begin{equation}
  \label{e:lagreg}
 u^\pm(\omega)\in C^\infty_\omega\big([-\delta,\delta];I^{0}(M;\Lambda^\pm_\omega)\big)  , \end{equation}
where the precise meaning of~\eqref{e:lagreg} is explained in  { Lemma~\ref{l:lagreg-oi} below (\eqref{e:lagreg-oi-2} and Remark 2)}.
\end{lemm}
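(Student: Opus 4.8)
The plan is to establish Lagrangian regularity of $u^\pm(\omega)=(P-\omega\mp i0)^{-1}f$ using the iterative characterization~\eqref{e:lagr-char} together with the radial/propagation estimates of~\S\ref{rad}. We treat $u^+(\omega)$, the case $u^-(\omega)$ being identical with $P$ replaced by $-P$. From Lemma~\ref{l:lap} we already know $u^+(\omega)\in H^{-\frac12-}(M)$ and $\WF(u^+(\omega))\subset\Lambda^+_\omega$, so the content is to show
$$
A_1\cdots A_\ell\, u^+(\omega)\in H^{-\frac12-}(M)
$$
for all $A_1,\dots,A_\ell\in\Psi^1(M)$ with $\sigma(A_j)|_{\Lambda^+_\omega}=0$. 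We will use the global defining function $G_+$ of Lemma~\ref{l:G-construction}: since any classical symbol of order $1$ vanishing on $\Lambda^+_\omega$ is a multiple of $G_+$ modulo symbols vanishing on the conic hypersurface $\Lambda^+$, and since we have the commutation relation $H_pG_+=a_+G_+$ with $a_+|_{\Lambda^+}=0$, testing $u^+(\omega)$ against products of first-order operators with symbols vanishing on $\Lambda^+_\omega$ can be reduced (modulo lower-order/already-controlled terms) to testing against powers of a single operator $\mathbf G\in\Psi^1(M)$ with $\sigma(\mathbf G)=G_+$, plus operators microsupported away from $\Lambda^+$ where~\eqref{eq:uep2} applies.

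The core of the argument is an induction on $\ell$, the number of factors. Suppose $A_1\cdots A_{\ell-1}u^+(\omega)\in H^{-\frac12-}$; we want $A_1\cdots A_\ell u^+(\omega)\in H^{-\frac12-}$. Write $v:=A_2\cdots A_\ell u^+(\omega)$; by the inductive structure (and shuffling factors, which costs only lower-order commutators already in the space) it suffices to estimate $\mathbf G^{\ell}u^+(\omega)$. Apply $(P-\omega-i\epsilon)$ to $\mathbf G^\ell u_\epsilon$ where $u_\epsilon=(P-\omega-i\epsilon)^{-1}f$: since $[P-\omega-i\epsilon,\mathbf G]=\frac1i\mathbf A+\text{l.o.t.}$ with $\sigma(\mathbf A)=\{p,G_+\}=a_+G_+$ vanishing on $\Lambda^+$, commuting $(P-\omega-i\epsilon)$ past $\mathbf G^\ell$ produces $\mathbf G^\ell f$ (smooth) plus a sum of terms of the form (operator of order $\le \ell-1$ with symbol vanishing on $\Lambda^+$)$\,\times\,u_\epsilon$. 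These are handled by a combination of the radial sink estimate~\eqref{eq:rad_sink}/\eqref{e:uep1-adv} for the part microlocalized near $\Lambda^+$ (whose threshold is $s<-\frac12$, which is exactly where we are, and the vanishing of the symbol on $\Lambda^+$ gains a half-derivative via property~(5) of~\S\ref{s:lagrangian-basic}) and by~\eqref{eq:uep2}/\eqref{e:uep2-adv} away from $\Lambda^+$; the $\epsilon$-uniformity of all these estimates lets us pass to the limit $\epsilon\to0+$ using Lemma~\ref{l:lap} / Lemma~\ref{l:lapup}. This closes the induction and gives $u^+(\omega)\in I^{0}(M;\Lambda^+_\omega)$ for each fixed $\omega$.

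For the smooth dependence on $\omega$ asserted in~\eqref{e:lagreg}, the plan is to differentiate in $\omega$. By Lemma~\ref{l:lapup} we have $\partial_\omega^k u^+(\omega)=k!\,(P-\omega-i0)^{-k-1}f$, and iterating the limiting absorption machinery shows these lie in $H^{-k-\frac12-}(M)$ with wavefront set in $\Lambda^+$; one then runs the same inductive Lagrangian-regularity argument for $(P-\omega-i0)^{-k-1}f$ (which solves $(P-\omega)^{k+1}w=f$) to show $\partial_\omega^k u^+(\omega)\in I^{k}(M;\Lambda^+_\omega)$, and checks joint continuity in $(\omega,\text{test operators})$ — this is what the oscillatory-integral formulation in the forthcoming Lemma~\ref{l:lagreg-oi} (parametrizing $\Lambda^+_\omega$ by a generating function $F(\omega,\xi)$, cf.\ Lemma~\ref{l:phase-der}) is designed to make precise: one shows $e^{iF(\omega,\xi)}\widehat{u^+(\omega)}(\xi)$ is a symbol of order $-\frac12$ depending smoothly on $\omega$, with $\omega$-derivatives controlled using $\partial_\omega F=-\Phi^+\circ(\partial_\xi F,\cdot)$ from Lemma~\ref{l:phase-der}.

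The main obstacle is bookkeeping the loss of derivatives in the inductive step so that one stays exactly at the Sobolev threshold $-\frac12-$: each factor of $\mathbf G$ a priori costs one derivative, and one must recover it from the combination of (i) the half-derivative gain from $\sigma(\mathbf G)|_{\Lambda^+}=0$ in the sink estimate and (ii) the fact that the commutator terms $[P-\omega-i\epsilon,\mathbf G^j]$ again have symbols vanishing on $\Lambda^+$, which lets the induction feed on itself rather than degrade. Making this accounting uniform in $\epsilon$ (and then in $\omega$) — so that the limit $\epsilon\to 0+$ genuinely lands in $I^{0}(M;\Lambda^+_\omega)$ and not merely in $I^{\beta}$ for every $\beta>0$ — is the delicate point, and is precisely where one must invoke the unconditional regularity statements following~\cite[Theorem~E.54]{res} together with the structural identity $H_pG_+=a_+G_+$ with $a_+|_{\Lambda^+}=0$.
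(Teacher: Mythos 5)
Your first stage --- reducing arbitrary products $A_1\cdots A_\ell$ with $\sigma(A_j)|_{\Lambda^+_\omega}=0$ to powers of a single operator $\mathbf G$ with $\sigma(\mathbf G)=G_+$, and running an induction on $\ell$ in which the commutators $\ad^j_{\mathbf G}P$ again have symbols vanishing on $\Lambda^+$ (via $H_{G_+}^jp=-(H_{G_+}^{j-1}a_+)G_+$) so that the sink estimate at threshold $s<-\tfrac12$ can absorb them --- is exactly the paper's Lemma~\ref{l:lagr-W} and the upgrade~\eqref{e:upgrador}, and is sound. But there are two genuine gaps. First, this machinery can only ever yield $u^+(\omega)\in I^{0+}(M;\Lambda^+_\omega)$, not $I^0$: the iterative characterization~\eqref{e:lagr-char} characterizes $I^{s+}$, and the sink estimate~\eqref{eq:rad_sink} holds only for $s$ \emph{strictly} below $-\tfrac12$, so no amount of care with the $\epsilon$-uniformity or with the identity $H_pG_+=a_+G_+$ closes the gap from $H^{-\frac12-\beta}$ for every $\beta>0$ to a symbol of order exactly $-\tfrac12$. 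The paper needs a separate mechanism for this (Lemma~\ref{l:lagreg-plus}): the transport equation $\tfrac1iL\sigma(u^+)=\sigma(f)\in S^{-\infty}$ with $L^*=-L$ (self-adjointness of $P$, real subprincipal symbol), the $H_p$-invariant density $\nu^+_\omega$ of Lemma~\ref{l:density} giving $\Re V=0$, and the dynamical fact~\eqref{e:lynmar} that one can flow back to $S^*M$ in time $t\sim|\xi|$, integrating an $\mathcal O((t-s)^{-2+})$ error to conclude the symbol is bounded. You flag the issue in your last paragraph but the tools you invoke cannot resolve it.

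Second, your route to $\omega$-smoothness is insufficient. Knowing $\partial_\omega^ku^+(\omega)=k!\,(P-\omega-i0)^{-k-1}f\in I^{k}(M;\Lambda^+_\omega)$ loses one order per derivative, which is exactly the order of $\partial_\omega F(\omega,\xi)\sim|\xi|$; it therefore cannot distinguish smooth variation of the symbol $a(\omega,\xi)$ from mere variation of the phase $e^{-iF(\omega,\xi)}$, and does not imply that $a(\omega,\xi)$ is a smooth family of order $-\tfrac12$ symbols as required by~\eqref{e:lagreg-oi-2}. The paper's device is to apply powers of the \emph{conjugated} derivative $D_\omega-Q$ with $\sigma(Q)|_{\Lambda^+}=\Phi_+$: by Lemma~\ref{l:phase-der} one may take $Q=-\partial_\omega F(\omega,D_x)$, so that $(D_\omega-Q)\mathcal I(a)=\mathcal I(D_\omega a)$ acts directly on the symbol, and the commutator $Y=[P-\omega,D_\omega-Q]$ has $\sigma(Y)|_{\Lambda^+}=0$ precisely because $H_p\Phi_+\equiv1$ (with $\ad_Q^jY$ also vanishing on $\Lambda^+$ since $H_{\Phi_+}$ is tangent to it). You cite the relation $\partial_\omega F=-\Phi^+\circ(\partial_\xi F,\cdot)$ but never feed it into an estimate that recovers the lost orders; without the $(D_\omega-Q)^m$ iteration of Lemma~\ref{l:lagr-iter} the claimed conclusion~\eqref{e:lagreg} does not follow.
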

\Remark
Lemma~\ref{l:lagreg} is similar to the results
of Haber and Vasy~\cite[Theorem 1.7, Theorem 6.3]{hb}. There are two differences:
\cite{hb} makes the assumption that the Hamiltonian field $H_p$ is radial on $\Lambda^\pm_\omega$
(which is not true in our case) and it also does not prove smooth dependence of the symbols of $u^\pm(\omega)$ on $\omega$.
Because of these we give a self-contained proof of Lemma~\ref{l:lagreg} below,
noting that the argument is  simpler in our situation.


We focus on the case of $u^+(\omega)$, with
regularity of $u^-(\omega)$ proved by replacing $P, \, \omega$ with $-P, \, - \omega$, respectively. 
By Lemma~\ref{l:lapup} we have for every $k\geq 0$
\begin{equation}
  \label{e:lag-apriori}
u^+(\omega)\in C^k_\omega([-\delta,\delta];H^{-k-{1\over 2}-}(M)),\quad
\WF(\partial^k_\omega u^+(\omega))\subset \Lambda^+
\end{equation}
where the wavefront set statement is uniform in $\omega$.

To upgrade~\eqref{e:lag-apriori} to Lagrangian regularity, we use the criterion~\eqref{e:lagr-char},
applying first order operators $W$ and $D_\omega-Q$ to $u^+(\omega)$ (see Lemma~\ref{l:lagr-iter} below). Here,
\begin{equation}
\label{eq:defWQ}
W,Q\in\Psi^1(M),\quad
\sigma(W)=G_+,\quad
\sigma(Q)|_{\Lambda^+}=\Phi_+
\end{equation}
where $G_+$ is the defining function of $\Lambda^+$ constructed in Lemma~\ref{l:G-construction}
and $\Phi_+$ is defined in~\eqref{e:Phi-new-def}.
The operator $D_\omega-Q$, where $D_\omega:={1\over i}\partial_\omega$, is used to establish smoothness in~$\omega$.

Our proof uses the following corollary of~\eqref{eq:rad_sink}:
\begin{equation}
  \label{e:step}
\begin{gathered}
\text{if}\quad Z\in\Psi^{-1}(M),\quad
\sigma(Z)|_{\Lambda^+}=0,\quad
s<-\textstyle{1\over 2}
\quad\text{then}
\\
v\in\mathcal D'(M),\quad
\WF(v)\subset\Lambda^+,\quad
(P+Z-\omega)v\in H^{s+1}\quad\Longrightarrow\quad
v\in H^s.
\end{gathered}
\end{equation}
The addition of $Z$ does not change the validity of~\eqref{eq:rad_sink}
since it is a subprincipal term whose symbol vanishes on $\Lambda^+$,
see~\cite[Theorem~E.54]{res}.

We also use the following identity valid for any operators $A,B$ on $\mathcal D'(M)$: 
\begin{equation}
  \label{e:a-bit-of-algebra}
B^mA=\sum_{j=0}^m \binom{m}{j}(\ad^j_B A)B^{m-j}, \ \ \ \ 
\ad_B A := [ B, A ] ,\quad
 {\ad^0_BA:=A}. 
\end{equation}
The first step of the proof is to establish regularity with respect to powers of $W$:
\begin{lemm}
  \label{l:lagr-W}
Assume that $v\in\mathcal D'(M)$ satisfies for some $\ell\geq 0$
and $s<-{1\over 2}$
\begin{equation}
  \label{e:lagr-W}
\WF(v)\subset\Lambda^+,\quad
W^j(P-\omega)v\in H^{s+1}\quad\text{for}\quad j=0,\dots,\ell.
\end{equation}
Then $W^\ell v\in H^s$, where $  W $ is defined in \eqref{eq:defWQ}.
\end{lemm}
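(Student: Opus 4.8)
The plan is to argue by induction on $\ell$. The base case $\ell=0$ is exactly the radial sink estimate in the form~\eqref{e:step} (with $Z=0$): the hypotheses give $\WF(v)\subset\Lambda^+$ and $(P-\omega)v\in H^{s+1}$, hence $v\in H^s$. For the inductive step, suppose the statement holds for $\ell-1$ and assume~\eqref{e:lagr-W} for $\ell$. First I would apply the inductive hypothesis (whose hypotheses are implied by those for $\ell$, since they involve fewer powers of $W$) to conclude $W^{\ell-1}v\in H^s$, and more generally $W^j v\in H^{s+\ell-1-j}$ for $0\le j\le \ell-1$ by running the induction with shifted Sobolev orders. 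The goal is then to push one more power of $W$ through, i.e.\ to show $W^\ell v\in H^s$.

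The key computation is to commute $W^\ell$ past $P-\omega$. Using~\eqref{e:a-bit-of-algebra} with $B=W$, $A=P-\omega$, and $m=\ell$, we have
\begin{equation}
\label{e:lagr-W-comm}
W^\ell(P-\omega)v=(P-\omega)W^\ell v+\sum_{j=1}^{\ell}\binom{\ell}{j}(\ad_W^j(P-\omega))W^{\ell-j}v.
\end{equation}
Now set $v_\ell:=W^\ell v$. The left-hand side of~\eqref{e:lagr-W-comm} lies in $H^{s+1}$ by hypothesis. In the sum on the right, each commutator $\ad_W^j(P-\omega)=\ad_W^j P$ is a pseudodifferential operator of order $1$ (each $\ad_W$ raises the order of $P\in\Psi^0$ by $1$ since $W\in\Psi^1$, but the leading commutator with a fixed-order operator is order $0$, so in fact $\ad_W P\in\Psi^0$ and $\ad_W^j P\in\Psi^{j-1}$; in any case it maps $H^{s+\ell-1-(\ell-j)}=H^{s+j-1}$ into $H^{s}\subset H^{s}$, and using $W^{\ell-j}v\in H^{s+j-1}$ from the induction, each term lies in $H^{s+1}$... ) — more carefully, $\ad_W^j P\in\Psi^{j-1}$ and $W^{\ell-j}v\in H^{s+\ell-1-(\ell-j)}=H^{s+j-1}$, so $(\ad_W^j P)W^{\ell-j}v\in H^{s}$. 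Hence the sum lies in $H^{s}$ (in fact we only need $H^{s}$, not $H^{s+1}$, for the next step).

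Rearranging~\eqref{e:lagr-W-comm}, we obtain $(P-\omega)v_\ell\in H^{s}$. This is not quite enough to apply~\eqref{e:step} directly, which needs the right-hand side in $H^{s+1}$; so the argument must be run with a small loss, or — better — by tracking the orders more carefully one sees that the commutator terms involving $W^{\ell-j}v$ for $j\ge 2$ actually gain extra regularity (since $\ad_W^j P\in\Psi^{j-1}$ with $j-1\le\ell-2$), leaving only the $j=1$ term $(\ad_W P)W^{\ell-1}v$, where $\ad_W P\in\Psi^0$ has symbol $\{G_+,p\}=-\{p,G_+\}=-a_+G_+$ which vanishes on $\Lambda^+$ by Lemma~\ref{l:G-construction}(3); thus $\ad_W P\in\Psi^0$ with symbol vanishing on $\Lambda^+$, and combined with $\WF(W^{\ell-1}v)\subset\Lambda^+$ this term is one degree better, landing in $H^{s+1}$. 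Therefore $(P-\omega)v_\ell\in H^{s+1}$. Since also $\WF(v_\ell)\subset\WF(v)\subset\Lambda^+$, the radial sink estimate~\eqref{e:step} with $Z=0$ gives $v_\ell=W^\ell v\in H^s$, completing the induction.

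The main obstacle is the bookkeeping of Sobolev orders in the commutator sum~\eqref{e:lagr-W-comm}: one must verify that every term except possibly the unproblematic ones has been lifted to $H^{s+1}$ before invoking~\eqref{e:step}, which forces one to use both the improved mapping properties of iterated commutators $\ad_W^j P\in\Psi^{j-1}$ and the symbol-vanishing property $\sigma(\ad_W P)|_{\Lambda^+}=0$ together with the wavefront constraint $\WF(v)\subset\Lambda^+$. An alternative, cleaner route that avoids the delicate $j=1$ analysis is to absorb the term $(\ad_W P)W^{\ell-1}v$ into the operator by writing the equation for $v_\ell$ as $(P+Z-\omega)v_\ell\in H^{s+1}$ for a suitable $Z\in\Psi^{-1}$ with $\sigma(Z)|_{\Lambda^+}=0$ arising from $\ad_W P$ after a density/conjugation normalization, and then applying the full form of~\eqref{e:step}; I would present whichever version keeps the order-counting most transparent.
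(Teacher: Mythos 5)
Your overall architecture (induction on $\ell$, the commutator expansion~\eqref{e:a-bit-of-algebra}, and the radial sink estimate~\eqref{e:step}) matches the paper, and the ``alternative, cleaner route'' you mention at the very end --- absorbing the $j=1$ commutator term into the operator as the perturbation $Z$ in~\eqref{e:step} --- is in fact the paper's proof and the only one of your two treatments of that term that works. The primary argument you present has a genuine gap: you claim that since $\ad_W P\in\Psi^0$ has symbol proportional to $a_+G_+$, vanishing on $\Lambda^+$, and $\WF(W^{\ell-1}v)\subset\Lambda^+$, the term $(\ad_W P)W^{\ell-1}v$ is ``one degree better, landing in $H^{s+1}$.'' No such gain holds for a general distribution with wavefront set contained in $\Lambda^+$: the principle ``operators whose symbol vanishes on $\Lambda^+$ gain an order'' is precisely the Lagrangian regularity that this lemma is in the process of establishing, and if it were available for free the entire induction would be unnecessary. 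Concretely, writing $\ad_W P=B_1W+R_1$ microlocally near $\Lambda^+$ with $B_1,R_1\in\Psi^{-1}$ and $\sigma(B_1)=ia_+$, one has $(\ad_W P)W^{\ell-1}v=B_1W^{\ell}v+R_1W^{\ell-1}v$ modulo smooth errors; the second summand is controlled by the inductive hypothesis, but the first contains $W^\ell v$ --- the very quantity being estimated --- so it cannot be moved to the right-hand side and must be absorbed into the operator as $(P+\ell B_1-\omega)W^\ell v\in H^{s+1}$. This is exactly why~\eqref{e:step} is stated with the perturbation $Z\in\Psi^{-1}$, $\sigma(Z)|_{\Lambda^+}=0$.

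Two further bookkeeping problems. First, the claim $W^jv\in H^{s+\ell-1-j}$ ``by running the induction with shifted Sobolev orders'' is unjustified: it would require $W^j(P-\omega)v\in H^{s+\ell-j}$, which is not assumed, and the shifted order can violate the threshold condition $s<-\tfrac12$. The paper only uses $W^kv\in H^s$ for $0\le k\le\ell-1$, which does follow from the inductive hypothesis as stated. Second, $\ad_W^jP\in\Psi^0$ for every $j\ge 1$ (each application of $\ad_W$ preserves the order since $W\in\Psi^1$), not $\Psi^{j-1}$, and in any case ``$\ad_W^jP\in\Psi^{j-1}$ with $j-1\le\ell-2$'' is not a reason for a gain of regularity --- positive-order operators lose derivatives. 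The $j\ge2$ terms are handled by the same factoring as $j=1$: using $H_{G_+}^jp=-(H_{G_+}^{j-1}a_+)G_+$ near $\Lambda^+$ one writes $\ad_W^jP=B_jW+R_j$ with $B_j,R_j\in\Psi^{-1}$ microlocally near $\Lambda^+$, and then $B_jW^{\ell+1-j}v$ and $R_jW^{\ell-j}v$ lie in $H^{s+1}$ by the inductive hypothesis because $\ell+1-j\le\ell-1$ for $j\ge2$.
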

\begin{proof}
We argue by induction on~$\ell$. For $\ell=0$ the lemma follows immediately from~\eqref{e:step}.
We thus assume that $\ell>0$ and the lemma is true for all smaller values of $\ell$, in particular
$W^kv\in H^s$ for $0\leq k\leq \ell-1$.
Using~\eqref{e:a-bit-of-algebra} we write
\begin{equation}
  \label{e:lW-1}
W^\ell (P-\omega)=(P-\omega)W^\ell+\sum_{j=1}^\ell \binom{\ell}{j}(\ad^j_W P)W^{\ell-j}.
\end{equation}
We recall from Lemma~\ref{l:G-construction}
that near $\Lambda^+$ we have $H_{G_+}p=-a_+G_+$ where $a_+$ is homogeneous
of order~$-1$ and $a_+|_{\Lambda^+}=0$. Therefore for $j\geq 1$ we have
$H_{G_+}^j p=-(H_{G_+}^{j-1}a_+)G_+$ near $\Lambda^+$.
Motivated by this we take
$$
B_j\in \Psi^{-1}(M),\quad 
\sigma(B_j)=(-1)^{j-1}i^jH_{G_+}^{j-1}a_+, \quad 
1 \leq j \leq \ell .
$$
Then, for $1\leq j\leq \ell$
\begin{equation}
  \label{e:lW-2}
\ad^j_WP=B_jW+R_j,\quad
R_j\in \Psi^{-1}\quad\text{microlocally near }\Lambda^+.
\end{equation}
Combining~\eqref{e:lW-1} and~\eqref{e:lW-2} we get
\begin{equation}
  \label{e:lW-3}
(P-\omega)W^\ell=W^\ell (P-\omega)-\sum_{j=1}^\ell \binom{\ell}{j} (B_j W^{\ell+1-j}+R_j W^{\ell-j}).
\end{equation}
Applying both sides of~\eqref{e:lW-3} to $v$ and using that $W^kv\in H^s$ for $0\leq k\leq\ell-1$
and that $W^\ell (P-\omega)v\in H^{s+1}$
we get
$$
(P+\ell B_1-\omega)W^\ell v\in H^{s+1}.
$$
Since $\sigma(B_1)=ia_+$ vanishes on $\Lambda^+$, we apply~\eqref{e:step}
to conclude that $W^\ell v\in H^s$ as needed.
\end{proof}
Since $(P-\omega)u^+(\omega)=f\in C^\infty(M)$, Lemma~\ref{l:lagr-W} implies that
\begin{equation}
  \label{e:limo}
W^\ell u^+(\omega)\in H^{-{1\over 2}-}(M)\quad\text{for all}\quad\ell\geq 0.
\end{equation}
This can be generalized as follows:
\begin{equation}
  \label{e:upgrador}
A_1\dots A_\ell u^+(\omega)\in H^{-{1\over 2}-}(M)\quad\text{for all}\quad
A_1,\dots,A_\ell\in \Psi^1(M),\
\sigma(A_j)|_{\Lambda^+}=0.
\end{equation}
To see~\eqref{e:upgrador}, we argue by induction on~$\ell$.
We have $\sigma(A_j)=\tilde a_j G_+$ near $\WF(u^+(\omega))\subset\Lambda^+$
for some $\tilde a_j$ which is homogeneous of order~0.
Taking
$\widetilde A_j\in\Psi^0(M)$ with $\sigma(\widetilde A_j)=\tilde a_j$ we have
$$
A_j=\widetilde A_j W+\widetilde R_j\quad\text{where}\quad \widetilde R_j\in \Psi^0(M)\quad\text{microlocally near}\quad \WF(u^+(\omega)).
$$
Then we can write $A_1\dots A_\ell u^+(\omega)$
as the sum of two kinds of terms (plus a $C^\infty$ remainder):
\begin{itemize}
\item the term $\widetilde A_1\dots \widetilde A_\ell W^\ell u^+(\omega)$,
which lies in $H^{-{1\over 2}-}(M)$ by~\eqref{e:limo}, and
\item terms of the form $A'_1\dots A'_m u^+(\omega)$ where
$0\leq m\leq \ell-1$, $A'_j\in \Psi^1(M)$, and $\sigma(A'_j)|_{\Lambda^+}=0$,
which lie in $H^{-{1\over 2}-}(M)$ by the inductive hypothesis.
\end{itemize}
 {From~\eqref{e:upgrador} we can deduce (similarly to the proof of Lemma~\ref{l:lagreg-oi} below)
that $u^+(\omega)\in I^{0+}(M;\Lambda^+_\omega)$ for each $\omega\in[-\delta,\delta]$.
To obtain the smooth dependence of the symbol of $u^+(\omega)$ on~$\omega$
we generalize~\eqref{e:limo} by additionally applying powers of $D_\omega-Q$:}
\begin{lemm}
  \label{l:lagr-iter}
For all integers $\ell,m\geq 0$ we have
\begin{equation}
  \label{e:lagr-iter}
W^\ell (D_\omega-Q)^m u^+(\omega)\in H^{-{1\over 2}-}(M),\quad
|\omega|\leq \delta,
\end{equation}
and the corresponding norms are bounded uniformly in $\omega$.
\end{lemm}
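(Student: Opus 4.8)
\textbf{Proof plan for Lemma~\ref{l:lagr-iter}.}
The plan is to induct on the pair $(m,\ell)$ ordered lexicographically, with $m$ the outer index and $\ell$ the inner one. The base case $m=0$ is exactly~\eqref{e:limo} (together with the uniform bounds coming from Lemma~\ref{l:lapup} and the uniformity built into the radial estimates), so I may assume $m\geq 1$ and that~\eqref{e:lagr-iter} holds with $m$ replaced by any smaller value and arbitrary $\ell$, and also (for the current $m$) with $\ell$ replaced by any smaller value. The key structural fact I will exploit is that $D_\omega$ acting on $u^+(\omega)$ is governed by the resolvent identity $\partial_\omega u^+(\omega)=(P-\omega\mp i0)^{-1}u^+(\omega)$ from Lemma~\ref{l:lapup}, which in operator form says $(P-\omega)(D_\omega-Q)u^+(\omega)=Q'u^+(\omega)+(\text{lower order})$ for a suitable $Q'\in\Psi^0$; more precisely $D_\omega\big[(P-\omega)u^+(\omega)\big]=D_\omega f=0$ gives $(P-\omega)D_\omega u^+(\omega)=\tfrac1i u^+(\omega)$, and the point of subtracting $Q$ (with $\sigma(Q)|_{\Lambda^+}=\Phi_+$, $H_p\Phi_+\equiv1$ by~\eqref{e:Phi-prop-1}) is precisely to cancel the principal part of this source term on $\Lambda^+$, so that $(P-\omega)(D_\omega-Q)u^+(\omega)\in\Psi^0\cdot\{\text{terms already controlled}\}$.

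The heart of the argument is then a commutator computation, parallel to the one in Lemma~\ref{l:lagr-W}, but now for the operator $W^\ell(D_\omega-Q)^m$. I would first commute all the $D_\omega-Q$ factors past $P-\omega$: using that $[P-\omega,D_\omega]=-\tfrac1i=i$ (a scalar!) and that $[P-\omega,Q]=[P,Q]$ has principal symbol $\tfrac1i H_p\sigma(Q)$, which restricted to $\Lambda^+$ equals $\tfrac1i H_p\Phi_+=\tfrac1i$, one finds that $[P-\omega,D_\omega-Q]$ has principal symbol vanishing on $\Lambda^+$ — this is the design principle behind the choice~\eqref{eq:defWQ}. Iterating, $(P-\omega)(D_\omega-Q)^m u^+(\omega)$ is, modulo terms $W$-free of order $\Psi^{-1}$ near $\Lambda^+$ applied to $(D_\omega-Q)^j u^+(\omega)$ with $j<m$, equal to $(D_\omega-Q)^m f$ plus such lower terms; since $f\in C^\infty$ and $D_\omega f=0$, the leading term contributes only $(-Q)^m f\in C^\infty$. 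Then I commute the $W^\ell$ past $P-\omega$ exactly as in~\eqref{e:lW-3}, producing $(P+\ell B_1+(\text{terms vanishing on }\Lambda^+)-\omega)W^\ell(D_\omega-Q)^m u^+(\omega)\in H^{s+1}$ with $s=-\tfrac12-$, where all the error terms are either $W^k(D_\omega-Q)^m u^+(\omega)$ with $k<\ell$ (controlled by the inner induction) or $W^{\ell'}(D_\omega-Q)^{j}u^+(\omega)$ with $j<m$, $\ell'$ arbitrary (controlled by the outer induction), all with uniform-in-$\omega$ bounds. An application of the radial sink estimate in the form~\eqref{e:step} — legitimate because $\sigma(\ell B_1)=i\ell a_+$ and all remaining zeroth-order corrections vanish on $\Lambda^+$ — then yields $W^\ell(D_\omega-Q)^m u^+(\omega)\in H^{-\frac12-}(M)$ with the desired uniform bound. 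Here I use that $\WF\big((D_\omega-Q)^m u^+(\omega)\big)\subset\Lambda^+$, which follows from~\eqref{e:lag-apriori} since $\partial_\omega^m u^+(\omega)$ has wavefront set in $\Lambda^+$ uniformly, and $Q$ is a pseudodifferential operator.

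\textbf{Main obstacle.} The bookkeeping of the double induction is routine in spirit but delicate in execution: one must verify that every commutator that appears, after the substitutions $[P-\omega,D_\omega-Q]\rightsquigarrow(\text{order }{-1},\ \sigma|_{\Lambda^+}=0)$ and $\ad_W P\rightsquigarrow B_jW+R_j$, can be reorganized into a finite sum of terms of the two admissible types (strictly fewer $W$'s, or strictly fewer $(D_\omega-Q)$'s with arbitrarily many $W$'s), without generating a term that needs what we are trying to prove. The only genuinely non-formal input is the observation that the source term $\tfrac1i u^+(\omega)$ produced by $D_\omega$ hitting $(P-\omega)u^+=f$ has its principal symbol on $\Lambda^+$ exactly matched by the principal symbol of $[P,Q]$ there, which is where~\eqref{e:Phi-prop-1} (the transport identity $H_p\Phi_+\equiv1$) enters decisively; once that cancellation is in place, every step is an instance of~\eqref{e:step} applied to an operator of the form $P+(\text{subprincipal, vanishing on }\Lambda^+)-\omega$. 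I would also note that the uniformity in $\omega$ propagates through because the constants in~\eqref{eq:rad_sink}, hence in~\eqref{e:step}, are independent of $\omega\in[-\delta,\delta]$, and the a priori bounds from Lemma~\ref{l:lapup} are uniform as well.
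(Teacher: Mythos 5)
Your strategy is the paper's: induct on $m$ (with $m=0$ given by~\eqref{e:limo}), move the factors of $D_\omega-Q$ past $P-\omega$ via~\eqref{e:a-bit-of-algebra}, observe that $Y:=[P-\omega,D_\omega-Q]=-i-[P,Q]$ has $\sigma(Y)|_{\Lambda^+}=0$ precisely because $H_p\Phi_+\equiv 1$, and then reduce to the $W$-only statement; your inner induction on $\ell$ is exactly Lemma~\ref{l:lagr-W}, which the paper simply invokes at this point rather than re-running. The architecture is right, and identifying the cancellation between the scalar $[P-\omega,D_\omega]$ and $\sigma([P,Q])|_{\Lambda^+}$ as the design principle behind $Q$ is correct (modulo a sign slip: $[P-\omega,D_\omega]=\tfrac1i=-i$, not $+i$; with your values the cancellation would fail).

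There is, however, one genuine gap in the claim that once $\sigma(Y)|_{\Lambda^+}=0$ is in place ``every step is formal.'' For $m\geq 2$ the expansion produces the iterated commutators $\ad_Q^{j-1}Y$ with $j\geq 2$, applied to $u_{m-j}=(D_\omega-Q)^{m-j}u^+(\omega)$. These lie in $\Psi^0(M)$, and for the scheme to close you need $W^\ell(\ad_Q^{j-1}Y)u_{m-j}\in H^{\frac12-}$ --- a half derivative better than what the inductive hypothesis gives for $W^{\ell'}u_{m-j}$ --- so you must know that $\sigma(\ad_Q^{j-1}Y)$ vanishes on $\Lambda^+$. This is \emph{not} a formal consequence of $\sigma(Y)|_{\Lambda^+}=0$: one has $\sigma(\ad_Q Y)=\tfrac1i H_{\sigma(Q)}\sigma(Y)$, and its restriction to $\Lambda^+$ vanishes only because $H_{\Phi_+}$ is tangent to $\Lambda^+$, which is the \emph{second} identity in~\eqref{e:Phi-prop-1}, namely $H_{G_+}\Phi_+\equiv 0$ on $\Lambda^+$ (this is~\eqref{e:Y-vanisher-2} in the paper). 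By contrast, commutators with $W$ automatically preserve vanishing on $\Lambda^+$ since $H_{G_+}$ is tangent to $\Lambda^+$; commutators with $Q$ do not come for free. So $H_p\Phi_+\equiv 1$ is not the only non-formal geometric input: without the tangency of $H_{\Phi_+}$ to $\Lambda^+$, the terms with $j\geq 2$ would only land in $H^{-\frac12-}$ after applying $W^\ell$, which is not enough to feed into~\eqref{e:step}, and your induction would stall at $m=2$. Supplying this one fact (and the resulting $W^\ell(\ad_Q^{j-1}Y)u_{m-j}\in H^{\frac12-}$, argued as in~\eqref{e:upgrador}) completes the proof along the lines you propose.
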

\begin{proof}
We argue by induction on $m$, with the case $m=0$ following from~\eqref{e:limo}. Put
$$
u_j(\omega):=(D_\omega-Q)^j u^+(\omega)\in\mathcal D'(M),\quad
0\leq j\leq m.
$$
By~\eqref{e:lag-apriori} we have $\WF(u_j(\omega))\subset\Lambda^+$ for all~$j$.
Moreover, by the inductive hypothesis
\begin{equation}
  \label{e:liter-ind}
W^\ell u_j(\omega)\in H^{-{1\over 2}-}(M)\quad\text{for all}\quad \ell,\
0\leq j\leq m-1.
\end{equation}
Put
$$
Y:=[P-\omega,D_\omega-Q]=-i-[P,Q]\in\Psi^0(M)
$$
and note that since $ \sigma ( Q )|_{\Lambda^+} = \Phi_+ $ and $H_p\Phi_+\equiv 1$ on $\Lambda^+$ by~\eqref{e:Phi-prop-1},
\begin{equation}
  \label{e:Y-vanisher}
\sigma(Y)|_{\Lambda^+}=0.
\end{equation}
Moreover, by~\eqref{e:Phi-prop-1} we have $H_{G_+}\Phi_+\equiv 0$ on $\Lambda^+$,
thus the Hamiltonian vector field $H_{\Phi_+}$ is tangent to $\Lambda^+$. This implies that
\begin{equation}
  \label{e:Y-vanisher-2}
\sigma(\ad_Q^j Y)=(-i)^j H_{\Phi_+}^j \sigma(Y)\equiv 0\quad\text{on}\quad \Lambda^+\quad\text{for all}\quad
j\geq 0.
\end{equation}
Applying~\eqref{e:a-bit-of-algebra} with $A:=P-\omega$ and $B:=D_\omega-Q$ 
to $u^+(\omega)$ we get
\begin{equation}
\label{e:rightor}
(P-\omega)u_m(\omega)=(D_\omega-Q)^m f+
\sum_{j=1}^m (-1)^{j-1}\binom{m}{j}(\ad_Q^{j-1}Y)u_{m-j}(\omega).
\end{equation}
Since $f\in C^\infty$ does not depend on~$\omega$, we have $(D_\omega-Q)^mf\in C^\infty$.
Next, by the inductive hypothesis~\eqref{e:liter-ind} we have
$W^\ell u_{m-j}(\omega)\in H^{-{1\over 2}-}$ for all $\ell\geq 0$ and $1\leq j\leq m$.
Arguing similarly to~\eqref{e:upgrador} and using~\eqref{e:Y-vanisher-2} we see
that $W^\ell (\ad_Q^{j-1}Y)u_{m-j}(\omega)\in H^{{1\over 2}-}$ as well
(here $\ad_Q^{j-1}Y\in \Psi^0(M)$ which explains the stronger regularity).
Thus \eqref{e:rightor} implies
$$
W^\ell (P-\omega)u_m(\omega)\in H^{{1\over 2}-}(M)\quad\text{for all}\quad\ell\geq 0.
$$
Now Lemma~\ref{l:lagr-W} gives $W^\ell u_m(\omega)\in H^{-{1\over 2}-}$ for all $\ell\geq 0$
as needed.

Finally, uniformity of~\eqref{e:lagr-iter} in $\omega$ follows immediately
from the proof since the estimates~\eqref{e:lag-apriori} and~\eqref{eq:rad_sink}
that we used are uniform in~$\omega$.
\end{proof}
We now deduce from Lemma~\ref{l:lagr-iter} that $u^+(\omega)$
has microlocal oscillatory integral representations~\eqref{e:lagros}
with symbols depending smoothly on~$\omega$. This shows the weaker version of~\eqref{e:lagreg}
with $I^0$ replaced by $I^{0+}$.
\begin{lemm}
  \label{l:lagreg-oi}
Assume that $\mathcal U\subset T^*M\setminus 0$ is an open conic set such that
$\Lambda^+_\omega\cap \mathcal U$ are given in the form~\eqref{e:phase-der-1}
in some local coordinate system on~$M$:
\begin{equation}
  \label{e:lagreg-oi-1}
\Lambda^+_\omega\cap\mathcal U=\{(x,\xi)\colon x=\partial_\xi F(\omega,\xi),\ \xi\in\Gamma_0\},\quad
|\omega|\leq\delta
\end{equation}
where $\xi\mapsto F(\omega,\xi)$ is homogeneous of order~1 and $\Gamma_0\subset\mathbb R^2\setminus 0$
is an open cone. Let $A\in\Psi^0(M)$, $\WF(A)\subset\mathcal U$.
Then,
\begin{equation}
  \label{e:lagreg-oi-2}
Au^+(\omega,x)=\int_{\Gamma_0}e^{i(\langle x,\xi\rangle-F(\omega,\xi))} a(\omega,\xi)\,d\xi
+C^\infty_{\omega,x},\quad
|\omega|\leq\delta
\end{equation}
where $a(\omega,\xi)$ is a smooth in $\omega$ family of symbols of order $-{1\over 2}+$ in $\xi$ supported in a closed cone inside $\Gamma_0$, see~\eqref{eq:symb}.
\end{lemm}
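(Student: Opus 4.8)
The plan is to deduce the microlocal oscillatory‐integral representation from the iterated estimates of Lemma~\ref{l:lagr-iter} by passing to the Fourier transform, much as in the proof of \cite[Proposition~25.1.3]{H4}, but carrying the parameter~$\omega$. After a microlocal partition of unity we may assume $\WF(A)$ is small enough that, multiplying by a cutoff $\chi\in C^\infty_{\rm c}$ equal to~$1$ near the $x$-projection of $\bigcup_{|\omega|\le\delta}(\WF(A)\cap\Lambda^+_\omega)$, we have $Au^+(\omega)=v(\omega)+C^\infty_{\omega,x}$ with $v(\omega):=\chi Au^+(\omega)$ compactly supported in the coordinate chart. By~\eqref{e:lag-apriori} and the equation $(P-\omega)u^+(\omega)=f\in\CI(M)$ --- which forces $\WF(\partial_\omega^k u^+(\omega))\subset p^{-1}(\omega)$, hence $\subset\Lambda^+_\omega$ --- the sets $\WF(\partial_\omega^k v(\omega))$ lie in $\Lambda^+_\omega\cap\mathcal U$ uniformly in $\omega$; thus $\widehat{v(\omega)}(\xi)$ and all its $\omega$-derivatives are smooth and rapidly decreasing, uniformly in $\omega$, outside any conic neighbourhood of $\overline{\Gamma_0}$. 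This accounts for the $C^\infty_{\omega,x}$ remainder in~\eqref{e:lagreg-oi-2} and reduces the lemma to proving that, for $\xi$ in a slightly smaller cone,
$$
b(\omega,\xi):=e^{iF(\omega,\xi)}\,\widehat{v(\omega)}(\xi)
$$
is a symbol of order $-\tfrac12+$ in $\xi$ with smooth dependence on $\omega$; then $a$ is $(2\pi)^{-2}b$ cut off to a closed subcone of $\Gamma_0$.

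The key algebraic observation is that near $\WF(v(\omega))$ the Lagrangian $\Lambda^+_\omega$ is cut out in $T^*M$ by the two functions $G_+$ (from Lemma~\ref{l:G-construction}) and $p-\omega$, whose differentials are independent there, and equally by $x_1-\partial_{\xi_1}F(\omega,\xi)$ and $x_2-\partial_{\xi_2}F(\omega,\xi)$. Writing $X_j:=x_j-\partial_{\xi_j}F(\omega,D)$ (the Fourier multiplier with symbol $\partial_{\xi_j}F(\omega,\xi)$, inessentially cut off near $\xi=0$) and $\widetilde D_\omega:=D_\omega+\partial_\omega F(\omega,D)$, and using Lemma~\ref{l:phase-der} to identify $\sigma(Q)|_{\Lambda^+_\omega}=\Phi_+|_{\Lambda^+_\omega}=-\partial_\omega F$, one gets --- microlocally near $\WF(v(\omega))$ and with all coefficients smooth in $\omega$ ---
$$
X_j=\alpha_j W+\beta_j(P-\omega)+R_j,\qquad
\widetilde D_\omega=(D_\omega-Q)+\textstyle\sum_j\gamma_j X_j+R',
$$
with $\alpha_j\in\Psi^{-1}$, $\beta_j\in\Psi^0$, $R_j\in\Psi^{-1}$, $\gamma_j\in\Psi^1$, $R'\in\Psi^0$. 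On the Fourier side (for $\xi$ large) $X_j$ acts as $ie^{-iF}\partial_{\xi_j}(e^{iF}\,\cdot\,)$ and $\widetilde D_\omega$ as $\tfrac1i e^{-iF}\partial_\omega(e^{iF}\,\cdot\,)$ --- the growth of $\partial_\omega F(\omega,D)$ cancelling that of $D_\omega$ --- so that $\widehat{X^\beta\widetilde D_\omega^m v(\omega)}=i^{\,|\beta|-m}e^{-iF}\partial_\xi^\beta\partial_\omega^m b$.

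The main step is then the estimate
$$
X^\beta\widetilde D_\omega^m v(\omega)\in H^{-\frac12+|\beta|-}(M),\qquad|\omega|\le\delta,
$$
with norms uniform in $\omega$, proved by induction on $|\beta|+m$. One expands $X^\beta\widetilde D_\omega^m$ using the two displays above and reorders the factors, bringing all copies of $W$ and $P-\omega$ innermost; every commutator produced either drops the operator order or has symbol vanishing on $\Lambda^+_\omega$, exactly as for $Y=[P-\omega,D_\omega-Q]$ in~\eqref{e:Y-vanisher}. This writes $X^\beta\widetilde D_\omega^m v(\omega)$ as a finite sum of terms each of which is manifestly in $H^{-\frac12+|\beta|-}(M)$ uniformly in $\omega$: one invokes Lemma~\ref{l:lagr-iter} and~\eqref{e:limo} to bound $W^\ell(D_\omega-Q)^{m'}u^+(\omega)$ in $H^{-\frac12-}$, the smoothness of $f$ to dispose of $(P-\omega)$-derivatives of $u^+(\omega)$, the inductive hypothesis for terms with fewer copies of $\widetilde D_\omega$, and the a priori wavefront bound $\WF(\partial_\omega^k u^+(\omega))\subset\Lambda^+_\omega$; the gain of $|\beta|$ Sobolev orders comes from the factors $\alpha_j\in\Psi^{-1}$ together with the fact that $W$ (symbol $G_+$) and $P-\omega$ (symbol $p-\omega$) do not worsen regularity on distributions microsupported in $\Lambda^+_\omega$. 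Translating via the previous paragraph, $\partial_\xi^\beta\partial_\omega^m b\in L^2(\langle\xi\rangle^{-1+2|\beta|-}\,d\xi)$ uniformly in $\omega$ for all $\beta,m$; a dyadic rescaling of $\xi$ combined with Sobolev embedding on the unit annulus upgrades this to the pointwise symbol bounds $|\partial_\xi^\beta\partial_\omega^m b(\omega,\xi)|\le C_{\beta,m,\varepsilon}\langle\xi\rangle^{-\frac12+\varepsilon-|\beta|}$, and the uniform bound on one further $\omega$-derivative yields the smooth dependence on $\omega$. This is~\eqref{e:lagreg-oi-2} with $a$ of order $-\tfrac12+$, and it reproves in passing that $u^+(\omega)\in I^{0+}(M;\Lambda^+_\omega)$.

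I expect the main obstacle to be precisely the bookkeeping in this last induction. Because the generating function $F(\omega,\cdot)$ --- hence the operators $X_j$ and $\widetilde D_\omega$ --- depends on $\omega$, the vector field $D_\omega$ does not commute with the $X_j$, and one must verify that this non-commutation is compensated exactly by the $\partial_\omega F$ term, which is where Lemma~\ref{l:phase-der} and the identities~\eqref{e:Phi-prop-1} for $\Phi_+$ enter, and that no reordering ever produces an operator violating the order count. The borderline Sobolev exponent $-\tfrac12$, which forces the half-open indices $H^{s-}$ and $I^{s+}$ throughout, is a pervasive but only cosmetic nuisance.
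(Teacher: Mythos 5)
Your proposal is correct and follows essentially the same route as the paper: reduce to Fourier-side bounds on $b(\omega,\xi)=e^{iF(\omega,\xi)}\widehat{Au^+(\omega)}(\xi)$, obtain them by applying the iterated estimates of Lemma~\ref{l:lagr-iter} to operators adapted to the parametrization (the paper uses the Fourier multiplier $Q(\omega)=-\partial_\omega F(\omega,D_x)$ and the first-order operators $A_{jk}(\omega)=D_{x_k}\bigl((\partial_{\xi_j}F)(\omega,D_x)-x_j\bigr)$, whose symbols vanish on $\Lambda^+_\omega$ by Lemma~\ref{l:phase-der}, via the generalized estimate for arbitrary such families), and conclude with weighted $L^2$ bounds plus Sobolev embedding as in H\"ormander. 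The only difference is cosmetic bookkeeping: you work with the order-zero operators $x_j-\partial_{\xi_j}F(\omega,D)$ and track an explicit Sobolev gain of $|\beta|$, whereas the paper premultiplies by $D_{x_k}$ so that all operators are first order with symbols vanishing on the Lagrangian and no order counting is needed.
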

\Remarks
1. The statement \eqref{e:lagreg-oi-2} means that 
$ u^+ ( \omega ) $ can be represented as~\eqref{e:lagros}, 
{\em microlocally} in every closed cone contained in $ \mathcal U $.

\noindent
2. When \eqref{e:lagreg-oi-2} holds for every choice of parametrization~\eqref{e:lagreg-oi-1} we write 
\[ u^+(\omega)\in C^\infty_\omega\big([-\delta,\delta];I^{0+}(M;\Lambda^+_\omega)\big)  , \]
with the analogous notation in the case of $ u^- ( \omega ) $. That explains the statement of Lemma~\ref{l:lagreg}. 
\begin{proof}
Since $(P-\omega)u^+(\omega)=f\in C^\infty(M)$,
it follows from Lemma~\ref{l:lagr-iter} that for all $m,\ell,r\geq 0$
$$
(D_\omega-Q)^mW^\ell (P-\omega)^r u^+(\omega)\in H^{-{1\over 2}-}(M)
$$
This can be generalized as follows:
\begin{equation}
  \label{e:oi-1}
(D_\omega-Q(\omega))^mA_1(\omega)\dots A_\ell(\omega)u^+(\omega)\in H^{-{1\over 2}-}(M)
\end{equation}
for all $m$ and all $A_1(\omega),\dots,A_\ell(\omega),Q(\omega)\in\Psi^1(M)$ depending smoothly
on~$\omega\in [-\delta,\delta]$ and such that $\sigma(A_j(\omega))|_{\Lambda^+_\omega}=0$,
$\sigma(Q(\omega))|_{\Lambda^+_\omega}=\Phi_+$.
The proof is similar to the proof of~\eqref{e:upgrador}, using the decomposition
$$
\begin{gathered}
A_j(\omega)=A'_j(\omega)W+A''_j(\omega)(P-\omega)+R_j(\omega)\\
\text{where}\quad
R_j(\omega)\in\Psi^0\quad\text{microlocally near}\quad \WF(u^+(\omega))
\end{gathered}
$$
for some $A'_j(\omega),A''_j(\omega)\in \Psi^0(M)$ depending smoothly
on $\omega\in [-\delta,\delta]$.

Since $\WF(A\partial^k_\omega u^+(\omega))\subset \Lambda^+\cap p^{-1}([-\delta,\delta])\cap \mathcal U$
for all $k$, by
the Fourier inversion formula we can write $Au^+(\omega)$ in the form~\eqref{e:lagreg-oi-2}
for some $a(\omega,\xi)$ which is smooth in $\omega,\xi$ and supported in $\xi\in\Gamma_1$
where $\Gamma_1\subset\Gamma_0$ is some closed cone.
It remains to show the following growth bounds as $\xi\to \infty$: for every $\varepsilon>0$
\begin{equation}
  \label{e:derb-l2}
\langle\xi\rangle^{-{1\over 2}+|\alpha|-\varepsilon} \partial^m_\omega \partial^\alpha_\xi a(\omega,\xi)
\in L^\infty_\omega([-\delta,\delta]; L^2_\xi(\mathbb R^2)).
\end{equation}
(From~\eqref{e:derb-l2} one can get $L^\infty_\xi$ bounds using Sobolev embedding
as in the proof of~\cite[Proposition~25.1.3]{H4}.)

Denote by $\mathcal I(a)$ the integral on the right-hand side of~\eqref{e:lagreg-oi-2}.
By Lemma~\ref{l:phase-der} we have
$\partial_\omega F(\omega,\xi)=-\Phi_+(\partial_\xi F(\omega,\xi),\xi)$, therefore
we may take $Q(\omega):=-\partial_\omega F(\omega, D_x)$ to be a Fourier multiplier.
The operators
$$
A_{jk}(\omega):=D_{x_k}\big((\partial_{\xi_j}F)(\omega,D_x)-x_j\big),\quad
j,k\in \{1,2\},
$$
lie in $\Psi^1$ and
satisfy $\sigma(A_{jk}(\omega))|_{\Lambda^+_\omega}=0$. We have
$$
(D_\omega-Q(\omega))\mathcal I(a)=\mathcal I(D_\omega a),\quad
A_{jk}(\omega)\mathcal I(a)=\mathcal I(\xi_k D_{\xi_j}a).
$$
Also, if $\mathcal I(a)\in H^{-{1\over 2}-}$ uniformly in $\omega$, then
$\langle\xi\rangle^{-{1\over 2}-\varepsilon}a(\omega,\xi)\in L^\infty_\omega([-\delta,\delta];L^2_\xi(\mathbb R^2))$.
Applying~\eqref{e:oi-1} with the operators $D_\omega-Q(\omega)$ and $A_{jk}(\omega)$
we get~\eqref{e:derb-l2}, finishing the proof.
\end{proof}
We finally show the stronger statement of Lemma~\ref{l:lagreg} (with $I^0$ instead of $I^{0+}$)
using the transport equation satisfied by the principal symbol:
\begin{lemm}
  \label{l:lagreg-plus}
We have
$$
u^+(\omega)\in 
C^\infty_\omega\big([-\delta,\delta];
I^0(M;\Lambda_\omega^+)
\big),
$$
that is~\eqref{e:lagreg-oi-2} holds where $a(\omega,\xi)$ is a symbol of order $-{1\over 2}$ in~$\xi$.
\end{lemm}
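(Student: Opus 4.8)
The plan is to use the transport equation \eqref{e:transport-eqn} for the symbol of $u^+(\omega)$---available because $\sigma(P-\omega)$ vanishes on $\Lambda^+_\omega$, as $\Lambda^+_\omega\subset p^{-1}(\omega)$---together with the radial sink structure of $\Lambda^+_\omega$ established in Lemma~\ref{l:sink-established}. By Lemma~\ref{l:lagreg-oi} and the Remarks following it we already know that $u^+(\omega)\in I^{0+}(M;\Lambda^+_\omega)$ with the symbols in~\eqref{e:lagreg-oi-2} smooth in~$\omega$; moreover $\WF(u^+(\omega))\subset\Lambda^+_\omega$, since $(P-\omega)u^+(\omega)=f\in C^\infty$ forces $\WF(u^+(\omega))\subset p^{-1}(\omega)$ by ellipticity of $P-\omega$ away from $p^{-1}(\omega)$, while $\WF(u^+(\omega))\subset\Lambda^+$. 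It therefore remains only to improve the order of the symbols in~\eqref{e:lagreg-oi-2} from $-{1\over 2}+$ to $-{1\over 2}$, that is, to rule out a logarithmic loss.

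First I would derive the transport equation. Fix a parametrization~\eqref{e:lagreg-oi-1} of an open conic piece $\Lambda^+_\omega\cap\mathcal U$ and choose $A\in\Psi^0(M)$ with $\WF(A)\subset\mathcal U$ which is microlocally the identity near a closed conic subset of that piece and satisfies $\WF([P,A])\cap\Lambda^+_\omega=\emptyset$. Then
$$(P-\omega)(Au^+(\omega))=Af+[P,A]u^+(\omega)\in C^\infty(M),$$
the commutator term being smooth because $\WF([P,A])\cap\WF(u^+(\omega))=\emptyset$. Writing $Au^+(\omega)$ as the oscillatory integral~\eqref{e:lagreg-oi-2} with symbol $a$, applying $P-\omega$ through the symbol calculus and using $\sigma(P-\omega)|_{\Lambda^+_\omega}=0$, one gets a transport equation $La=g$ in which $L$ is a first order differential operator on symbols with principal part $H_p$, and---the key point---$g$ is of order \emph{two} lower than $a$ rather than the naively expected one, because $(P-\omega)(Au^+(\omega))$ is smooth. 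By~\eqref{eq:LLst} we have $L^*=-L$, and since the Lie derivative $\mathcal L_{H_p}$ is already (formally) anti-self-adjoint on half-densities, this forces $L=\mathcal L_{H_p}+i\beta$ with $\beta$ real and homogeneous of order~$-1$.

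The final and main step is to solve this transport equation using the sink dynamics. By Lemma~\ref{l:sink-established}, $L^+_\omega=\kappa(\Lambda^+_\omega)$ is a closed attracting cycle for $X$, of period $T$, and the radial variable $|\xi|$ is dilated by a fixed factor $e^\gamma>1$ over one period (this being the content of $|c_2|<1$ there). Parametrize $\Lambda^+_\omega$ by $(\theta,r)\in(\mathbb R/T\mathbb Z)\times\mathbb R^+$ with $\theta$ running along $L^+_\omega$ and $r=|\xi|$; then $H_p=r^{-1}\partial_\theta+b(\theta)\partial_r$ with $\oint b\,d\theta=\gamma$, and straightening the flow by $\rho:=re^{-B(\theta)}$, $B(\theta)=\int_0^\theta b$, turns $La=g$ (after multiplication by $r$) into an ODE $\partial_\theta\tilde a+i\tilde\beta(\theta)\tilde a=\tilde g$ in~$\theta$ with $\tilde g$ of order one lower than $\tilde a$. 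Integrating this ODE around $L^+_\omega$ and using that periodicity in $\theta$ reads $\tilde a(\theta+T,e^{-\gamma}\rho)=\tilde a(\theta,\rho)$ yields a functional equation $h(e^\gamma\rho)=\lambda h(\rho)+K(\rho)$ for $h(\rho):=\tilde a(0,\rho)$, where $\lambda=e^{-i\oint\tilde\beta}$ has modulus one and $K$ is of order one lower than $h$. Iterating this relation, the homogeneous term $\lambda^N h(\cdot)$ stays bounded because $|\lambda^N|=1$, and $\sum_N\lambda^N K(e^{N\gamma}\rho)$ converges absolutely precisely because of the one-order gain in $K$; this pins the symbol $a$ (reconstructed from $h$ via the ODE) to the critical order $-{1\over 2}$, with no logarithmic loss. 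Smooth dependence on $\omega$ is retained because the transport operator, $\gamma$, $\lambda$ and the coordinate change all vary smoothly in $\omega$ and all estimates are uniform; one may alternatively carry the operator $D_\omega-Q$ of Lemma~\ref{l:lagr-iter} through the argument. Patching finitely many conic charts covering $L^+_\omega$, and treating each connected component of $L^+_\omega$ separately, completes the proof.

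The step I expect to be the main obstacle is the bookkeeping of symbol orders through the transport equation so as to isolate the decisive \emph{one-order gain} in the forcing term: this is what makes the monodromy iteration converge, and it uses jointly that $(P-\omega)u^+(\omega)$ is smooth (not merely Lagrangian of order~$-1$) and that $L^*=-L$ gives $|\lambda|=1$. A secondary difficulty is that $H_p$ is not radial on $\Lambda^+_\omega$ (unlike in Haber--Vasy~\cite{hb}), which is exactly why the flow-straightening and monodromy computation cannot be bypassed.
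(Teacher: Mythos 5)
Your proof is correct and rests on the same three pillars as the paper's: the transport equation \eqref{e:transport-eqn} with the decisive two-order gain coming from $(P-\omega)u^+(\omega)=f\in C^\infty$; the anti-self-adjointness $L^*=-L$ from \eqref{eq:LLst}, which makes the propagation of the symbol unimodular (your $|\lambda|=1$ is the paper's $\Re V=0$); and the sink dynamics, which let you control the symbol at fiber infinity from its values on the compact part. The only genuine difference is in how the transport equation is integrated: the paper trivializes $\Omega^{1/2}_{\Lambda^+_\omega}$ by the $H_p$-invariant density of Lemma~\ref{l:density}, integrates continuously along the backward flow (formula \eqref{e:flourish}) and bounds the Duhamel term by $\int_0^t\langle t-s\rangle^{-2+}\,ds<\infty$ using \eqref{e:lynmar}, whereas you discretize via the return map of the closed orbit and sum the geometric series $\sum_j\lambda^{j}K(e^{j\gamma}\rho)$; these are the same estimate in continuous and discrete form, and your version makes the role of the monodromy more explicit at the cost of chart-patching. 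Two points to tighten. First, $[P,A]u^+(\omega)$ is not smooth in general: since $\Lambda^+_\omega$ is a closed cone not contained in $\mathcal U$, $\WF([P,A])$ meets $\Lambda^+_\omega$ at the edges of the chart, so your local transport equation holds only microlocally where $A$ is the identity; it is cleaner to work with the invariantly defined principal symbol on all of $\Lambda^+_\omega$ and the global statement \eqref{e:transport-eqn}, as the paper does. Second, boundedness of the trivialized symbol gives only the $\alpha=0$ case of the estimates \eqref{eq:symb}; to conclude $a\in S^{-1/2}$ you must run the same monodromy iteration on radial and angular derivatives of the symbol, which is the paper's step \eqref{e:heald} using that $\xi\partial_\xi$ and $H_p$ frame $\Lambda^+_\omega$. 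Neither point affects the substance of the argument.
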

\begin{proof}
In our setting $ P \in \Psi^0 ( M ) $ is self-adjoint with respect to a  smooth density on~$ M$ -- see \eqref{eq:assP}. 
Using that density to trivialize the half-density bundle we 
obtain a self-adjoint operator
$P\in \Psi^0(M;\Omega^{1\over 2}_M)$.

Let $a^+\in S^{{1\over 2}+}(\Lambda^+_\omega;\mathcal M_{\Lambda^+_\omega} \otimes \Omega_{\Lambda^+_\omega}^{1\over 2})$ be a representative of $\sigma(u^+(\omega))$.
Using the transport equation~\eqref{e:transport-eqn}
and
$(P-\omega)u^+(\omega)=f\in C^\infty(M)$, we have
\begin{equation}
  \label{e:tbone}
b^+:=La^+\in S^{-{3\over 2}+}(\Lambda^+_\omega;\mathcal M_{\Lambda^+_\omega} \otimes \Omega_{\Lambda^+_\omega}^{1\over 2}),
\end{equation}
where $L$ is a first-order differential operator on
$C^\infty(\Lambda^+_\omega;\mathcal M_{\Lambda^+_\omega} \otimes \Omega_{\Lambda^+_\omega}^{1\over 2})$
with principal part given by $H_p$ and $L^*=-L$
by~\eqref{eq:LLst}. 

We trivialize $\Omega^{1\over 2}_{\Lambda^+_\omega}$ using the density $\nu^+_\omega$ constructed in Lemma~\ref{l:density}
and write
$$
a^+=\tilde a^+\sqrt{\nu^+_\omega},\quad
b^+=\tilde b^+\sqrt{\nu^+_\omega}.
$$
where $\tilde a^+\in S^{0+}(\Lambda^+_\omega;\mathcal M_{\Lambda^+_\omega})$,
$\tilde b^+\in S^{-2+}(\Lambda^+_\omega;\mathcal M_{\Lambda^+_\omega})$. By~\eqref{e:tbone} we have
\begin{equation}
  \label{e:hradish}
(H_p+V)\tilde a^+=\tilde b^+
\end{equation}
where $H_p$ naturally acts on sections of the locally constant bundle
$\mathcal M_{\Lambda^+_\omega}$ and $V\in C^\infty(\Lambda^+_\omega)$
is homogeneous of order~$-1$. Moreover, since $L^*=-L$ 
we have
$$
\Re V=\tfrac{1}{2}(\mathcal L_{H_p}\nu^+_\omega)/\nu^+_\omega=0
$$
using Lemma~\ref{l:density}. 

By~\eqref{e:hradish} for all $(x,\xi)\in\Lambda^+_\omega$ and $t\geq 0$ we have
\begin{equation}
  \label{e:flourish}
\tilde a^+(x,\xi)= {\big(}e^{-t(H_p+V)}\tilde a^+ {\big)}(x,\xi)+\int_0^t  {\big(}e^{-s(H_p+V)}\tilde b^+ {\big)}(x,\xi)\,ds.
\end{equation}
Since $\Re V=0$ we have $|e^{-t(H_p+V)}\tilde a^+(x,\xi)|=|\tilde a^+(e^{-tH_p}(x,\xi))|$
and same is true for $\tilde b^+$.

Take $(x,\xi)\in \Lambda^+_\omega$ with $|\xi|$ large.
As in~\eqref{e:lynmar} choose $t\geq 0$, $t\sim |\xi|$, such that $e^{-tH_p}(x,\xi)\in S^*M$;
we next apply~\eqref{e:flourish}. The first term on the right-hand side is bounded uniformly
as $\xi\to\infty$. Same is true for the second term since the function under the integral
is $\mathcal O((t-s)^{-2+})$.
It follows that $\tilde a^+(x,\xi)$ is bounded as $\xi\to\infty$.

Since $[\xi\partial_\xi,H_p+V]=-H_p-V$, we have for all $j$
\begin{equation}
  \label{e:heald}
(H_p+V)(\xi\partial_\xi)^j \tilde a^+=(\xi\partial_\xi+1)^j \tilde b^+\in S^{-2+}(\Lambda^+_\omega;\mathcal M_{\Lambda^+_\omega}).
\end{equation}
It follows that $(H_p+V)^\ell (\xi\partial_\xi)^j\tilde a^+=\mathcal O(\langle\xi\rangle^{-\ell})$
for all $j,\ell$: the case $\ell=0$ follows from~\eqref{e:flourish} applied to~\eqref{e:heald}
and the case $\ell\geq 1$ follows directly from~\eqref{e:heald}.
Since $\xi\partial_\xi$ and $H_p$ form a frame on $\Lambda^+_\omega$,
we have $\tilde a^+\in S^{0}(\Lambda^+_\omega;\mathcal M_{\Lambda^+_\omega})$
which implies that $u^+_\omega\in I^0(M;\Lambda^+_\omega)$.
\end{proof}

\Remark It is instructive to consider the transport equation 
\eqref{e:hradish} in the microlocal model used in \cite{SC}: near 
a model sink $ \Lambda^+_\omega = \{ ( -\omega , x_2 ; \xi_1 , 0 ) : \xi_1 > 0 \}
\subset T^* ( \RR_{x_1} \times \mathbb S^1_{x_2} ) \subset 0 $ (see the global examples in 
\S \ref{exa}) we consider $ p ( x, \xi ) := \xi_1^{-1} \xi_2 - x_1 $.
We are then solving $( p ( x , D )-\omega) u^+ ( \omega ) \equiv 0 $ 
microlocally near $ \Lambda^+_\omega $ (see \cite[Definition E.29]{res}) and  
for that we expand the symbol on $ u^+_\omega $ into Fourier modes in $ x_2 $,
\[
u^+_\omega ( x ) = \frac{1}{2\pi}  \int_\RR 
\sum_{ n \in \ZZ }  \hat a_\omega^+ ( n , \xi_1 ) e^{ i (x_1+\omega) \xi_1 }e^{inx_2} \,d \xi_1, \ \  a_\omega^+ = \sum_{n\in \ZZ} \hat  a_\omega^+ ( n 
, \xi_1 )e^{inx_2} | d \xi_1 dx_2 |^{\frac12} .
\] 
The Fourier coefficients should satisfy
$ ( \xi_1^{-1} n + D_{\xi_1}  ) \tilde a_\omega^+ ( n , \xi_1 ) = 0 $ for $ \xi_1 > 1 $ and $ \tilde a_+^\omega ( n, \xi_1 ) = 0 $ for $ \xi_1 < -1 $.
Hence the symbol is given by 
\[   
a_\omega^+=\tilde a^+ (\omega) |d x_2 d\xi_1|^{1\over 2},\quad
\tilde a^+ ( x_2, \xi_1 ) = \sum_{ n \in \ZZ } \xi_1^{ -i n  } 
a_n(\omega) e^{i n x_2} , \quad
    a_n (\omega) = \mathcal O ( \langle n\rangle^{-\infty} ). \]
Hence, the symbol is very ``non-classical" in the sense that it does not have an expansion in powers of $ \xi_1 $.  In the general case  {an analogous conclusion} follows from the structure of~\eqref{e:hradish}.

\section{An asymptotic result}
\label{asr}

We now place ourselves in the setting of Lemma \ref{l:lagreg} and 
assume that $ u ( \omega ) \in C^\infty_\omega ( [ - \delta, \delta ] ;
I^{ 0} ( M ; \Lambda_\omega )) $ in the sense described in Lemma~\ref{l:lagreg-plus},
where $\Lambda_\omega=\Lambda^+_\omega$ or $\Lambda_\omega=\Lambda^-_\omega$.
We are interested in the asymptotic behaviour as $t\to\infty$ of
\begin{equation}
\label{eq:Lagrancon1}
I ( t ) := 
\int_0^t  \int_\RR e^{ - i s \omega }   \varphi ( \omega )u ( \omega ) \,  d \omega ds \in\mathcal D'(M), \ \ \varphi \in \CIc ( (-\delta,\delta) ).
\end{equation}
We have the following local asymptotic result. 
\begin{lemm}
\label{lem}
Suppose that $ u ( \omega ) \in \mathcal D' ( \RR^2 ) $ is given by 
\begin{equation}
\label{eq:defxy} 
\begin{gathered} u ( \omega ) =  u ( \omega, x )  = \frac{1}{ (2 \pi)^2 } \int_{\Gamma_0} 
e^{ i ( \langle x , \xi \rangle - F ( \omega , \xi ) ) }  
a (\omega  ,  \xi ) \,d \xi , 
\end{gathered}
\end{equation}
where $\Gamma_0$,  $F $, and $ a $ satisfy the general conditions in \eqref{e:lagreg-oi-2}. 
Suppose also that
\begin{equation}
\label{eq:assFo}
\varepsilon \partial_\omega F ( \omega, \xi ) < 0 , \quad
\varepsilon = \pm,   \quad
\xi \in \Gamma_0,\quad
|\omega|\leq\delta .
\end{equation}
Then as $t\to\infty$,
\begin{equation}
\label{eq:Lagrancon2}
\begin{gathered} 
I ( t ) = u_\infty   + b ( t ) + v ( t ) ,  
 \ \ \|  {b} ( t ) \|_{ H^{ \frac 12 - } } \leq C, \ \ v ( t ) \to 0 \text{ in $ H^{  - \frac 12 - } ( \RR^2 )$}, \\
u_\infty = \left\{ \begin{array}{ll}     2 \pi \varphi ( 0 ) u( 0 ) ,
&  \varepsilon = +;  \\
\ \ \ \ \ 0, & \varepsilon = - .
\end{array} \right.
\end{gathered}
\end{equation}
\end{lemm}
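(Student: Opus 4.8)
The plan is to carry out the $s$--integral explicitly, peel off the contribution of the resonance $\omega=0$ with an $i0$--regularization, and recognise the two resulting pieces as a fixed Lagrangian distribution and a high--frequency remainder. Since $\int_0^t e^{-is\omega}\,ds=(1-e^{-it\omega})/(i\omega)$ we have $I(t)=\int_\RR\frac{1-e^{-it\omega}}{i\omega}\varphi(\omega)u(\omega)\,d\omega$, and as $1-e^{-it\omega}$ is smooth and vanishes at $\omega=0$ we may write, as an identity of distributions in $\omega$, $\frac{1-e^{-it\omega}}{i\omega}=\frac{1}{i(\omega-i0)}-\frac{e^{-it\omega}}{i(\omega-i0)}$; thus $I(t)=u^\sharp-R(t)$ with the fixed distribution $u^\sharp:=\big\langle\frac{1}{i(\omega-i0)},\varphi(\omega)u(\omega)\big\rangle_\omega$ and $R(t):=\big\langle\frac{e^{-it\omega}}{i(\omega-i0)},\varphi(\omega)u(\omega)\big\rangle_\omega$. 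Substituting \eqref{eq:defxy} and passing to the $\xi$--side, everything is governed by the symbols $c^\sharp(\xi)=\big\langle\frac{1}{i(\omega-i0)},e^{-iF(\omega,\xi)}\varphi(\omega)a(\omega,\xi)\big\rangle_\omega$ and $c_R(t,\xi)$ (defined analogously with the extra factor $e^{-it\omega}$); after a partition of unity we may assume $\xi$ stays in a closed subcone of $\Gamma_0$ on which \eqref{eq:assFo} yields both a two--sided bound $c_1|\xi|\le|\partial_\omega F(\omega,\xi)|\le c_2|\xi|$ for all $|\omega|\le\delta$ and the fixed sign $\operatorname{sgn}\partial_\omega F\equiv-\varepsilon$.

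For the remainder I would write $\frac{e^{-it\omega}}{i(\omega-i0)}=\int_t^\infty e^{-is\omega}\,ds$, so that $c_R(t,\xi)=\int_t^\infty\big(\int_\RR e^{-i(s\omega+F(\omega,\xi))}\varphi(\omega)a(\omega,\xi)\,d\omega\big)\,ds$. The $\omega$--phase has derivative $s+\partial_\omega F$, which vanishes for some $\omega\in\operatorname{supp}\varphi$ only when $\varepsilon=+$ and $|\xi|$ is comparable to $s$; off this band — and for all $s\ge0$ when $\varepsilon=-$ — integration by parts in $\omega$ gains a factor $(s+|\xi|)^{-1}$ each time, while on the band stationary phase (supplemented, if $\partial_\omega^2F$ degenerates, by integration by parts in $s$, using that the $s$--derivative of the critical phase equals $\omega_*(s,\xi)$) shows that $\widehat{R(t)}(\xi)$ is $\mathcal O_N(t^{-N}\langle\xi\rangle^{-N})$ for $|\xi|\lesssim t$ and, for $|\xi|\gtrsim t$, is concentrated near the shifted Lagrangian cone with amplitude of order $-\tfrac12+$. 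Hence $\|R(t)\|_{H^{-1/2-}}\to0$, and this piece will be $v(t)$.

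For the fixed part, using $\frac{1}{i(\omega-i0)}=\pi\delta(\omega)+\operatorname{p.v.}\frac{1}{i\omega}$ and splitting $\varphi(\omega)a(\omega,\xi)=\varphi(0)a(0,\xi)\chi(\omega)+\big(\varphi(\omega)a(\omega,\xi)-\varphi(0)a(0,\xi)\chi(\omega)\big)$ with $\chi\in\CIc(\RR)$ equal to $1$ near $0$: the last bracket vanishes at $\omega=0$, so dividing it by $\omega$ gives a smooth, $\omega$--compactly supported symbol of order $-\tfrac12+$ in $\xi$ (cf.\ \eqref{eq:symb}) whose non--stationary--phase contribution (again using $|\partial_\omega F|\gtrsim|\xi|$) is $\mathcal O(\langle\xi\rangle^{-\infty})$; for the main term $e^{-iF(\omega,\xi)}=e^{-iF(0,\xi)}e^{-i\omega\partial_\omega F(0,\xi)+\mathcal O(\omega^2|\xi|)}$ near $\omega=0$, and the orientation--preserving rescaling $\mu=|\partial_\omega F(0,\xi)|\,\omega$ gives $\operatorname{p.v.}\!\int\frac{e^{-iF(\omega,\xi)}\chi(\omega)}{i\omega}\,d\omega=e^{-iF(0,\xi)}\big(\operatorname{p.v.}\!\int_\RR\frac{e^{\,\varepsilon i\mu}}{i\mu}\,d\mu+\mathcal O(|\xi|^{-1})\big)=e^{-iF(0,\xi)}(\varepsilon\pi+\mathcal O(|\xi|^{-1}))$. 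Combining, $c^\sharp(\xi)=(1+\varepsilon)\pi\,\varphi(0)\,a(0,\xi)e^{-iF(0,\xi)}+\mathcal O(\langle\xi\rangle^{-3/2+})$; since $a(0,\xi)e^{-iF(0,\xi)}$ is the $\xi$--Fourier transform of $u(0)$ and a two--dimensional symbol of order $-\tfrac32+$ defines an element of $H^{1/2-}$, we get $u^\sharp=(1+\varepsilon)\pi\varphi(0)\,u(0)+w$ with $w\in H^{1/2-}(\RR^2)$ (here $u(0)\in I^{0+}(\RR^2;\Lambda_0)$ by the hypothesis on $a$, so $u^\sharp$ is a well--defined Lagrangian distribution).

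Finally, taking $u_\infty=(1+\varepsilon)\pi\varphi(0)u(0)$ — that is, $2\pi\varphi(0)u(0)$ when $\varepsilon=+$ and $0$ when $\varepsilon=-$ — $b(t):=w$ (a fixed element of $H^{1/2-}$, hence uniformly bounded) and $v(t):=-R(t)\to0$ in $H^{-1/2-}$ yields \eqref{eq:Lagrancon2}. The main obstacle is the third step's computation: one must choose the $i0$--prescription in the splitting above so that the $\delta$-- and principal--value contributions add to exactly $(1+\varepsilon)\pi\varphi(0)u(0)$ — in particular so that they cancel when $\varepsilon=-$, reflecting the ``incoming'' versus ``outgoing'' nature of the two signs — and one must push the error down to order $-\tfrac32+$, which is precisely the threshold placing $b(t)$ in the smaller space $H^{1/2-}$ rather than merely in $H^{-1/2-}$; getting the sharp decay of $c_R(t,\xi)$ on the stationary band $|\xi|\sim s$ (so that $\|v(t)\|_{H^{-1/2-}}\to0$ for \emph{every} positive loss, not just a large one) is the other technical point, and it is there that the oscillation of $h(s,\xi)$ in $s$ is essential. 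All remaining manipulations — the Fubini for the $\omega$/$\xi$ integrals, the partition of unity, and the routine (non)stationary phase bounds — are standard, and continuity of $u(\omega)$ at $\omega=0$ in $H^{-1/2-}$ from Lemma~\ref{l:lapup} is used only to make sense of $c^\sharp(0)$--type quantities.
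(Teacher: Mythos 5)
Your decomposition $I(t)=u^\sharp-R(t)$ obtained from $\int_0^t e^{-is\omega}\,ds=\tfrac{1}{i(\omega-i0)}-\tfrac{e^{-it\omega}}{i(\omega-i0)}$, and the evaluation of $u^\sharp$ through $\tfrac{1}{i(\omega-i0)}=\pi\delta_0+\mathrm{p.v.}\tfrac1{i\omega}$, is a legitimate repackaging of the paper's argument: your $u^\sharp$ is the paper's $\widetilde J(\infty)$, and the cancellation giving the factor $(1+\varepsilon)\pi$ reproduces exactly what the paper obtains from the two--dimensional stationary phase in $(\omega,r)$. Note that the Hessian there is $\begin{pmatrix}-\partial_\omega^2F&-1\\-1&0\end{pmatrix}$, with determinant $-1$ and signature $0$ regardless of $\partial_\omega^2F$; so your worry about $\partial_\omega^2F$ degenerating is an artifact of doing iterated one--dimensional stationary phase and disappears if you treat $(\omega,s)$ jointly. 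The leading constant, the error of order $-\tfrac32+$ placing $b(t)$ in $H^{1/2-}$, and the identification $u_\infty=(1+\varepsilon)\pi\varphi(0)u(0)$ all come out correctly, and the treatment of $|\xi|$ bounded is routine (the paper absorbs it into $b(t)$ by integrating by parts in $\omega$ against $(1+s^2)^{-1}(1+D_\omega^2)$).

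The genuine gap is the uniform estimate for $c_R(t,\xi)$ on the transition band where the critical time $s_*(\xi)=-\partial_\omega F(0,\xi)\sim|\xi|$ is comparable to $t$. There the stationary point $(\omega,s)=(0,s_*(\xi))$ sits on, or arbitrarily close to, the boundary $s=t$ of $\int_t^\infty$, so interior stationary phase does not apply; and your fallback of integrating by parts in $s$ ``using $\partial_s$ of the critical phase $=\omega_*(s,\xi)$'' fails precisely there, because $\omega_*\to 0$ at the critical point. This boundary case is the entire content of the paper's Lemma~\ref{l:trivial}: one performs the $s$--integral first against the sharp cutoff, producing $B(\sigma,\omega)=\mathcal O(\langle\sigma\rangle^{-1})$ with $\sigma=\omega/h$, and then integrates crudely in $\omega$ to get the bound $\mathcal O(h\log(1/h))$ in \eqref{eq:trivial} --- and the paper observes this logarithm is \emph{sharp}, so the best uniform bound available is $\mathcal O(\langle\xi\rangle^{-1/2}\log\langle\xi\rangle)$ rather than $\mathcal O(\langle\xi\rangle^{-1/2})$; this is exactly why $v(t)\to 0$ only in $H^{-1/2-}$ and not in $H^{-1/2}$. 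You correctly flag this as ``the other technical point,'' but the mechanisms you name do not close it; you need to supply an argument of the type of Lemma~\ref{l:trivial} (or a stationary--phase--with--boundary estimate) to justify the claimed decay of $\widehat{R(t)}$ on the band $|\xi|\sim t$.
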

\begin{proof}
We start by remarking
that
we can assume that the amplitude $ a $ is 
supported away from $ \xi = 0 $. 
The remaining contribution can be absorbed into $ b ( t ) $: if $ a = a (\omega, \xi ) = 0 $ for $ |\xi| > C $ then
\begin{equation*}
\begin{split} 
\widehat w ( t, \xi ) & :=\int_0^t \int_\RR  e^{ - i s \omega } e^{ - i F ( \omega, \xi )  } 
a ( \omega, \xi ) \varphi ( \omega ) d \omega ds \\
& = 
\int_0^t \int_\RR  \left[( 1 + s^2)^{-1} ( 1 + D_\omega^2 ) e^{ - i s \omega }\right] 
e^{  -i  F ( \omega, \xi  )   } 
a ( \omega, \xi ) \varphi ( \omega ) d \omega ds ,
\end{split} \end{equation*}
which by integration by parts in $ \omega $ is bounded in $ t $ and
compactly supported in $ \xi $. 

 {Since $u(\omega,x)$ has nice structure on the Fourier transform side it is natural to} consider the Fourier transform of $ x \mapsto I ( t  ) ( x )  $, $J ( t ,\xi ) := \mathcal F_{ x \to \xi } { I ( t ) }$, 
 where 
\begin{equation}
\label{eq:defJt} 
J ( t,\xi  ) = \frac 1 h \int_0^{ h t }  \int_\RR 
 e^{ - \frac i h  ( F ( \omega , \eta )  + r \omega  ) }  a ( \omega,  \eta /h )  \varphi ( \omega ) \, d \omega dr ,  \quad \xi = \frac \eta h , \ \ \eta \in \mathbb S^{1} . 
\end{equation}
 {From the assumptions on $a$ we have $J(t,\xi)=0$ unless $\eta\in\Gamma_1$, where
$\Gamma_1\subset \Gamma_0$ is a closed cone.}
The phase in $ J ( t ) $ is stationary when
\begin{equation}
\label{eq:cretin}  
 \omega = 0, \ \  r = r ( \eta ) := -  \partial_\omega F ( 0, \eta ) .
 \end{equation}
From \eqref{eq:assFo}, $ \partial_\omega F ( \omega , \eta ) \neq 0 $ and this means that for some $ \gamma > 0 $, 
\begin{equation}
\label{eq:lowFom}   
| r + \partial_\omega F( \omega, \eta ) | > c \langle r \rangle  , \ \ 
\eta \in \mathbb S^{1}\cap \Gamma_1 , \ \ | \omega | \leq \delta,   
\ \ |r| \notin ( \gamma, 1/\gamma ) .
\end{equation}
Let $ \chi \in \CIc ( ( \gamma/2 , 2/\gamma) ; [ 0, 1 ] ) $ 
be equal to $ 1 $ on $ ( \gamma , 1/\gamma ) $.
Using integration by parts based on 
\[  {h^N}\left( - ( r+ \partial_\omega F ( \omega , \eta ) )^{-1} D_\omega \right)^N 
e^{ - \frac i h ( F ( \omega , \eta )+r\omega) } =  e^{ - \frac i h ( F( \omega , \eta ) +r\omega) } , \]
and \eqref{eq:lowFom} we see that, by taking $ N \geq 2 $, 
\[
\begin{split} &  \frac1h \int_0^{h t }   \int_\RR ( 1 - \chi( r ) )
 e^{  - \frac i h ( F ( \omega , \eta )  + r \omega ) }  a ( \omega , \eta /h )  \varphi ( \omega ) \, d \omega dr 
= \mathcal O ( h^{N-  1 } ) , 
\end{split}
\]
uniformly in $t\geq 0$. Hence, for all $N$
\[
\begin{gathered} J ( t  ) = \widetilde J ( t  ) +\mathcal F_{ x \mapsto \xi }  u_0 ( t ), \ \ 
\sup_{t\geq 0}\| u_0 ( t ) \|_{H^N} \leq C_N, \\
\widetilde J  ( t,\xi  ) := \frac 1 h  \int_0^{ h t }   \int_\RR \chi ( r ) 
 e^{  - \frac i h ( F ( \omega , \eta )  + r \omega ) }  a ( \omega , \eta /h   )  \varphi ( \omega )  \,d \omega dr , \quad  
 \xi = \frac \eta h , \ \eta \in \mathbb S^{1} . \end{gathered}
\]
When $ ht   \geq 2/\gamma $, we have $ \widetilde J ( t,\xi  ) = 
\widetilde J  ( \infty,\xi  ) $ due to the support property of $\chi$.
In particular this implies that $\widetilde J(t,\xi)\to \widetilde J(\infty,\xi)$ as $t\to\infty$ pointwise in $\xi$.
We apply the standard 
method of stationary phase to $\widetilde J(\infty)$ noting that
\[  
 - \partial^2_{ \omega, r } ( F ( \omega, \eta ) + r \omega ) = 
\begin{bmatrix} -  \partial_\omega^2 F & - 1 \\
-1 & 0 \end{bmatrix} , \ \ \ \sgn \partial^2_{ \omega, r } ( F ( \omega, \eta ) - r \omega )  = 0.
 \]
Therefore
\begin{equation}
  \label{e:asyy}
\widetilde J   ( \infty ,\xi  ) = \left\{ \begin{array}{ll}
2 \pi a ( 0 , \xi) \varphi ( 0 ) e^{- i  F ( 0, \xi ) } + \mathcal O ( \langle \xi \rangle^{- \frac 32 + } ) ,
& \partial_\omega F ( 0 , \xi) <  0 , \\
\ \ \ \ \ \ \ \ \ \ \ \ \mathcal O ( \langle \xi\rangle^{-\infty } ), & \partial_\omega F ( 0 , \xi )  >  0.
\end{array} \right. 
\end{equation}
Hence to obtain \eqref{eq:Lagrancon2} all we need to show is that $ \widetilde J   ( t,\xi  ) = \mathcal O ( 
\langle \xi\rangle ^{- \frac12 + } ) $ uniformly in $ t $ as then by dominated convergence,
\[
  \begin{split} \langle \xi\rangle ^{ - \frac 12  -  }  \widetilde J   (t  ) 
&  \xrightarrow{ L^2 ( \RR^2,  d\xi ) } \langle \xi \rangle^{   -\frac 12  - } \widetilde J   ( \infty   ) , \ \ \ t \to +\infty ,  \end{split} 
\]
that is, 
\[
\widetilde I(t):=\mathcal F^{-1} _{\xi \to x}  \widetilde J ( t) 
\xrightarrow{H^{ -\frac12- } ( \RR^2 ) }
\mathcal F^{-1} _{\xi \to x} \widetilde J_\infty ( t ) , \ \ \ t \to + \infty .
\]
 {Here the $\mathcal O(\langle \xi\rangle^{-\frac 32+})$ remainder in~\eqref{e:asyy}
can be put into $b(t)$ in~\eqref{eq:Lagrancon2}.}

The uniform boundedness of $\widetilde J(t,\xi)$ follows from 
the following simple lemma:
\begin{lemm}
\label{l:trivial}
Suppose that $ A= A ( s , \omega ) \in \CIc ( \RR^2 ) $ and 
$ G \in \CI ( \RR; \RR ) $. Then as $h\to 0$
\begin{equation}
\label{eq:trivial}
L(h):= \int_0^\infty \int_\RR e^{ \frac i h ( G ( \omega ) + s \omega ) } 
A ( s, \omega ) \, d\omega ds = \mathcal O ( h \log (1/h)) . 
\end{equation}
\end{lemm}
\begin{proof}
We define
\[
B ( \sigma, \omega ) := \int_0^\infty e^{  i s \sigma } A ( s , \omega ) \,
ds , \ \  B ( \sigma , \omega ) =  i \sigma^{-1}{ A ( 0 , \omega ) } + 
\mathcal O ( \sigma^{-2 }) , \ \ |\sigma| \to \infty. 
\]
Hence,
\[
\begin{split}  L ( h ) & = \int_\RR e^{ \frac i h G ( \omega ) } B \left( \frac{ \omega} h , 
\omega \right) d \omega  = h \int_\RR e^{ \frac i h G ( h w ) } 
B ( w , h w ) \,d w \\
& = \mathcal O ( h ) \int_{|w| \leq C/h} \,\frac{ dw}{  1 + |w|} = 
\mathcal O ( h \log (1/h ) ), \end{split}
\]
proving \eqref{eq:trivial}. (In fact we see that the estimate is sharp: if we take $ G \equiv 0 $ and $ A $ which is {\em odd} in $ \omega $
one does have logarithmic growth.)
\end{proof}
To use the lemma to show the bound $ \widetilde J   ( t,\xi  ) = \mathcal O ( 
\langle \xi\rangle ^{- \frac12 + } ) $, uniformly in $ t\geq 0 $,
it suffices to consider the case $ht\leq 2/\gamma$,
since otherwise $\widetilde J(t,\xi)=\widetilde J(\infty,\xi)$.
As before, we write $\xi=\eta/h$ where $\eta\in\mathbb S^1$. Then
\[
\widetilde J (t,\xi) = \frac1h \int_0^\infty \int_\RR 
e^{ \frac i h ( s\omega- ht\omega - F ( \omega, \eta )  ) }
\chi ( ht - s ) a ( \omega , \eta /h ) \varphi ( \omega )\, d\omega d s.
\] 
We now apply Lemma~\ref{l:trivial} with $ A  ( s, \omega ) := h^{\alpha - \frac12} 
 \chi ( ht - s ) a ( \omega , \eta /h ) \varphi ( \omega )$, $ \alpha >0 $ (and
 arbitrary) 
   and 
$ G ( \omega ) = -ht\omega-F ( \omega , \eta )$ to obtain, 
$   \widetilde J  ( t) = \mathcal O ( h^{\frac12 - \alpha } \log(1/h)  ) = 
\mathcal O ( \langle \xi \rangle^{-\frac12 + 2 \alpha } ) $ which concludes the proof.
\end{proof}

\section{Proof of the Main Theorem}

In the approach of \cite{SC} the decomposition of $ u ( t ) $ is obtained using 
\eqref{eq:uoft} and proving that for $ \varphi $ supported in a neighbourhood of 
$ 0 $, 
\begin{equation}
  \label{e:scarab}
  P^{-1} ( e^{ - i tP  } - 1 ) \varphi ( P ) f
\xrightarrow{ H^{-\frac12 -  } ( M)  }  -( P - i 0 ) ^{-1}\varphi( P )  f , \quad
t \longrightarrow \infty ,
\end{equation}
which makes formal sense if we think in terms of distributions.  
The rigorous argument requires finer aspects of Mourre theory developed by 
Jensen--Mourre--Perry~\cite{jemp}. 

Here we take a more geometric approach and use Lemma~\ref{l:lap} and~\ref{l:lagreg}
to study the behaviour of $ u ( t ) $. Fix $\delta>0$ small enough so that the results of~\S\ref{s:sink-source},
as well as~\eqref{e:no-spectrum}, hold.
Fix $\varphi\in C^\infty_{\rm{c}}((-\delta,\delta))$ such that $\varphi=1$ near 0.
By~\eqref{eq:uoft}, the spectral theorem, 
and Stone's formula (see for instance \cite[Theorem B.8]{res}) we have 
\begin{equation}
\label{eq:uoft1}
\begin{aligned}
  u( t) &=
  -i\int_0^t e^{-isP}\varphi(P)f\,ds
  +P^{-1}(e^{-itP}-1)(1-\varphi(P))f
   \\&=
  \frac{1}{ 2 \pi  }\int_0^t  \int_\RR  e^{ - i s \omega } \varphi ( \omega )
(u^-(\omega)-u^+(\omega))\, d \omega ds
+ b_1 ( t) ,
\end{aligned}
 \end{equation}
where  $ \| b_1 ( t )\|_{L^2 } \leq C $ for all $t\geq 0$ and
$u^\pm(\omega):=(P-\omega\mp i0)^{-1}f\in H^{-1/2-}(M)$ are defined in
Lemma~\ref{l:lap}.

By Lemma~\ref{l:lagreg} we have
$u^\pm ( \omega )  \in C^\infty_{\omega} ( [ - \delta ,  \delta ] ;  I^{0} ( M; \Lambda^\pm_\omega ))$.
The main result \eqref{eq:SC2}, \eqref{eq:DZ1} then follows from Lemma~\ref{lem}.
Here we use a pseudodifferential partition
of unity to write $u^\pm(\omega)$ as a finite
sum of oscillatory integrals~\eqref{eq:defxy} and the geometric condition~\eqref{eq:assFo} follows from
Lemmas~\ref{l:phase-der} and~\ref{l:Phi-sign}.
We obtain $u_\infty=-u^+(0)$ which is consistent with~\eqref{e:scarab}.

\medskip\noindent\textbf{Acknowledgements.}
This note is a result of a ``groupe de travail'' on \cite{SC} conducted in Berkeley in February and March of 2018. We would like to thank the participants of that seminar and in particular Thibault de Poyferr\'e for explaining the fluid mechanical motivation to us.
Thanks go also to Andr\'as Vasy for a helpful discussion of results of
\cite{hb}. We are also grateful to Micha\l{} Wrochna for pointing out to us
a mistake in Lemma~\ref{l:sink-established}~-- see 
the remark following
that lemma~--  {and to the anonymous referee for many suggestions to improve the manuscript.}
This research was conducted during the period SD served as
a Clay Research Fellow and MZ was supported
by the National Science Foundation grant DMS-1500852 and by a Simons Fellowship. 


\begin{thebibliography}{0}

\bibitem[CdV18]{C}  Yves Colin de Verdi\`ere, 
	\emph{Spectral theory of pseudo-differential operators of degree 0 and application to forced linear waves,\/}
	preprint, \arXiv{1804.03367}.

\bibitem[CS18]{SC} Yves Colin de Verdi\`ere and Laure Saint-Raymond, 
	\emph{Attractors for two dimensional waves with homogeneous Hamiltonians of degree 0,\/}
	to appear in Comm. Pure Appl. Math., \arXiv{1801.05582}.

\bibitem[DaDy13]{DaD} Kiril Datchev and Semyon Dyatlov,
	\emph{Fractal Weyl laws for asymptotically hyperbolic manifolds,\/}
	Geom. Funct. Anal. \textbf{23}(2013), 1145--1206.   

\bibitem[Dy12]{dy} Semyon Dyatlov,
	\emph{Asymptotic distribution of quasi-normal modes for Kerr--de Sitter black holes,\/}
	Ann. Henri Poincar\'e \textbf{13}(2012), 1101--1166.

\bibitem[DyGu16]{DG}  Semyon Dyatlov and Colin Guillarmou,
        \emph{Pollicott--Ruelle resonances for open systems,\/}
        Ann. Inst. Henri Poincar\'e (A), \textbf{17}(2016), 3089--3146.

\bibitem[DyZw16]{DZ} Semyon Dyatlov and Maciej Zworski, 
	\emph{Dynamical zeta functions for Anosov flows via microlocal analysis,\/}
	Ann. Sci. Ec. Norm. Sup\'er. \textbf{49}(2016), 543--577.

\bibitem[DyZw17]{zazi} Semyon Dyatlov and Maciej Zworski,
	\emph{Ruelle zeta function at zero for surfaces,\/}
	Inv.~Math. \textbf{210}(2017), 211--229.

\bibitem[DyZw]{res} Semyon Dyatlov and Maciej Zworski, 
	\emph{Mathematical theory of scattering resonances,\/}
                book in preparation; \url{http://math.mit.edu/~dyatlov/res/}

\bibitem[HaVa15]{hb} Nick Haber and Andr\'as Vasy, 
	\emph{Propagation of singularities around a Lagrangian submanifold of radial points,\/}
	Bull.~Soc.~Math.~France \textbf{143}(2015), 679--726. 

\bibitem[HMV04]{hmv} Andrew Hassell, Richard Melrose, and Andr\'as Vasy,
	\emph{Spectral and scattering theory for symbolic potentials of order zero,\/}
	Adv. Math. \textbf{181}(2004), 1--87.

\bibitem[HiVa16]{HiV} Peter Hintz and Andr\'as Vasy,
	\emph{The global non-linear stability of the Kerr-de Sitter family of black holes,\/}
	to appear in Acta Math., \arXiv{1606.04014}. 

\bibitem[H\"oIII]{H3} Lars H{\"o}rmander,
       \emph{The Analysis of Linear Partial Differential Operators III. Pseudo-Differential Operators,\/}
        Springer Verlag, 1985.

\bibitem[H\"oIV]{H4} Lars H{\"o}rmander,
       \emph{The Analysis of Linear Partial Differential Operators IV. Pseudo-Differential Operators,\/}
        Springer Verlag, 1985.

\bibitem[JMP84]{jemp}  Arne Jensen, \'Eric Mourre, and Peter Perry,
	\emph{Multiple commutator estimates and resolvent smoothness in quantum scattering theory,\/}
	Ann. Inst. H. Poincar\'e Phys. Th\'eor. \textbf{41}(1984), 207--225.

\bibitem[Me94]{mel} Richard B. Melrose,
        \emph{Spectral and scattering theory for the Laplacian on asymptotically Euclidian spaces,\/}
        in \emph{Spectral and scattering theory} (M. Ikawa, ed.), 
        Marcel Dekker, 1994 

\bibitem[NiZh99]{Surf} Igor Nikolaev and Evgeny Zhuzhoma,
	\emph{Flows on 2-dimensional Manifolds. An Overview,\/}
	Springer, 1999.

\bibitem[Ra73]{Ra73} James Ralston,
	\emph{On stationary modes in inviscid rotating fluid,\/}
	J. Math. Anal. Appl. \textbf{44}(1973), 366--383.

\bibitem[Va13]{Va} Andr\'as Vasy,
        \emph{Microlocal analysis of asymptotically hyperbolic and Kerr--de Sitter spaces,\/}
        with an appendix by Semyon Dyatlov,
        Invent. Math. \textbf{194}(2013), 381--513. 

\bibitem[Zw16]{V4D} Maciej Zworski,
	\emph{Resonances for asymptotically hyperbolic manifolds: Vasy's method revisited,\/}
	J.~Spectr.~Theory. \textbf{6}(2016), 1087--1114.


        

\end{thebibliography}
\end{document}